\numberwithin{equation}{section}
\numberwithin{figure}{section}
\theoremstyle{plain}
\newtheorem{thm}{\protect\theoremname}[section]
  \theoremstyle{definition}
  \newtheorem{defn}[thm]{\protect\definitionname}
  \theoremstyle{remark}
  \newtheorem{rem}[thm]{\protect\remarkname}
  \theoremstyle{plain}
  \newtheorem{cor}[thm]{\protect\corollaryname}
  \theoremstyle{plain}
  \newtheorem*{conjecture*}{\protect\conjecturename}
  \theoremstyle{plain}
  \newtheorem{conjecture}[thm]{\protect\conjecturename}
  \theoremstyle{plain}
  \newtheorem{prop}[thm]{\protect\propositionname}
  \theoremstyle{plain}
  \newtheorem{lem}[thm]{\protect\lemmaname}
\date{}
  \providecommand{\conjecturename}{Conjecture}
  \providecommand{\corollaryname}{Corollary}
  \providecommand{\definitionname}{Definition}
  \providecommand{\lemmaname}{Lemma}
  \providecommand{\propositionname}{Proposition}
  \providecommand{\remarkname}{Remark}
\providecommand{\theoremname}{Theorem}
\begin{document}
\global\long\def\R{\mathbb{R}}

\global\long\def\C{\mathbb{C}}

\global\long\def\bbN{\mathbb{N}}

\global\long\def\Z{\mathbb{Z}}

\global\long\def\N{\mathbb{N}}

\global\long\def\Q{\mathbb{Q}}

\global\long\def\T{\mathbb{T}}

\global\long\def\F{\mathbb{F}}

\global\long\def\Sph{\mathbb{S}}

\global\long\def\sub{\subseteq}

\global\long\def\cvx{\mbox{Cvx}\left(\R^{n}\right)}

\global\long\def\cvxo{\text{Cvx}_{0}\left(\R^{n}\right)}

\global\long\def\lcg{\text{LC}_{g}\left(\R^{n}\right)}

\global\long\def\lc{\text{LC}\left(\R^{n}\right)}

\global\long\def\dis{\text{Dis }\left(\R^{n}\right)}

\global\long\def\one{\mathbbm1}

\global\long\def\infc#1#2{#1\,+_{\text{cvx}}\,#2}

\global\long\def\supc#1#2{#1\,+_{\text{lc}}\,#2}

\global\long\def\vol{{\rm Vol}}

\global\long\def\EE{\mathbb{E}}

\global\long\def\cvdold{\,\cdot_{\text{cvx}}\,}

\global\long\def\lcdold{\,\cdot_{\text{lc}}\,}

\global\long\def\cvd{\,\check{\cdot}\,}

\global\long\def\lcd{\,\hat{\cdot}\,}

\global\long\def\cvxdot{\,\cdot_{\text{cvx}}\,}

\global\long\def\cvxplus{\,+_{{\scriptstyle {\rm cvx}}}\,}

\global\long\def\convp{\overset{\smallsmile}{+}}

\global\long\def\logcp{\overset{\smallfrown}{+}}

\global\long\def\cvp{\,+_{\text{cvx}}\,}

\global\long\def\lcp{\,+_{\text{lc}}\,}

\global\long\def\cvone#1{\one_{\left\{  #1\right\}  }^{\infty}}

\global\long\def\cvxone#1{\one_{\left\{  #1\right\}  }^{\infty}}

\global\long\def\epi#1{\text{epi}\left\{  #1\right\}  }

\global\long\def\sp{{\rm sp}}

\global\long\def\K{\mathcal{K}}

\global\long\def\A{\mathcal{A}}

\global\long\def\L{\mathcal{L}}

\global\long\def\P{\mathcal{P}}

\global\long\def\W{\mathcal{W}}

\global\long\def\iprod#1#2{\langle#1,\,#2\rangle}

\global\long\def\cvrs{{\rm Cvrs}}

\global\long\def\cvrsb{\overline{{\rm Cvrs}}}

\global\long\def\uball{B_{2}^{n}}

\global\long\def\conv{{\rm conv}}

\global\long\def\Gauss{\gamma_{n}^{\sigma}}

\global\long\def\EXP#1{\EE_{{\textstyle #1}}}

\global\long\def\eps{\varepsilon}

\global\long\def\PP{\mathbb{P}}

\global\long\def\IndUball{\one_{B_{2}^{n}}}

\global\long\def\J{{\cal J}}

\global\long\def\N{{\cal N}}

\global\long\def\inte#1{{\rm int}\left(#1\right)}

\title{On weighted covering numbers and the Levi-Hadwiger
conjecture}

\author{Shiri Artstein-Avidan and Boaz A. Slomka}

\maketitle

\begin{abstract}
We define new natural variants of the notions of weighted covering
and separation numbers and discuss them in detail. We prove a strong
duality relation between weighted covering and separation numbers
and prove a few relations between the classical and weighted covering
numbers, some of which hold true without convexity assumptions and
for general metric spaces. As a consequence, together with some volume
bounds that we discuss, we provide a  bound for the famous Levi-Hadwiger
problem concerning covering a convex body by homothetic slightly smaller
copies of itself, in the case of centrally symmetric convex bodies,
which is qualitatively the same as the best currently known bound.
We also introduce the weighted notion of the Levi-Hadwiger covering
problem, and settle the centrally-symmetric case, thus also confirm
the equivalent fractional illumination conjecture \cite[Conjecture 7]{Naszodi09}
in the case of centrally symmetric convex bodies (including the characterization
of the equality case, which was unknown so far).
\end{abstract}

\section{Introduction}

\subsection{Background and Motivation}

Covering numbers can be found in various fields of mathematics, including
combinatorics, probability, analysis and geometry. They often participate
in the solution of many problems in quite a natural manner.

In the combinatorial world, the idea of fractional covering numbers
is well-known and utilized for many years. In \cite{ArtsteinRaz2011},
the authors introduced the weighted notions of covering and separation
numbers of convex bodies and shed new light on the relations between
the classical notions of covering and separation, as well as on the
relations between the classical and weighted notions. In this note
we propose a variant of these numbers which is perhaps more natural
and discuss these numbers in more detail, revealing more useful relations,
as well as some applications. To state our main results, we need some
definitions. The impatient reader may skip the following section and
go directly to Section \ref{sec:MainResults} where the main results
are stated. 

Apart from deepening our understanding of these notions, and revealing
more useful relations, we also consider this work as a first step
towards the functionalization of covering and separation numbers;
in the past decade, various parts from the theory of convex geometry
have been gradually extended to the realm of log-concave functions.
Numerous results found their functional generalizations. One natural
way to embed convex sets in $\R^{n}$ into the class of log-concave
functions is to identify every convex set $K$ with its characteristic
function $\one_{K}$. Besides being independently interesting, such
extensions may sometimes be applied back to the setting of convex
bodies. For further reading, we refer the reader to \cite{AKM2005,Ball88,Barth98,Klartag2007,Klartag2005}.
Since covering numbers play a considerable part in the theory of convex
geometry, their extension to the realm of log-concave functions seems
to be an essential building block for this theory. Our results using
functional covering numbers will be published elsewhere.

\subsection{Definitions}

Let $K\sub\R^{n}$ be compact and let $T\sub\R^{n}$ be compact with
non-empty interior. The classical covering number of $K$ by $T$
is defined to be the minimal number of translates of $T$ such that
their union covers $K$, namely
\[
N\left(K,T\right)=\min\left\{ N\,:\, N\in\bbN,\,\,\exists x_{1},\dots x_{N}\in\R^{n};\,\, K\sub\bigcup_{i=1}^{N}\left(x_{i}+T\right)\right\} .
\]
Here and in the sequel we assume that the covered set $K$ is compact
and the covering set $T$ has non-empty interior so that the covering
number will be finite. However, one may remove these restriction so
long as we are content also with infinite outcomes. 

A well-known variant of the covering number is obtained by considering
only translates of $T$ that are centered in $K$, namely
\[
\overline{N}\left(K,T\right)=\min\left\{ N\,:\, N\in\bbN,\,\,\exists x_{1},\dots x_{N}\in K;\,\, K\sub\bigcup_{i=1}^{N}\left(x_{i}+T\right)\right\} .
\]
Clearly, $N\left(K,T\right)\le\overline{N}\left(K,T\right)$, and
it is easy to check that for convex bodies%
\footnote{by convex body we mean, here and in the sequel, a compact convex set
with non-empty interior%
} $K$ and $T$, we have $\overline{N}\left(K,T-T\right)\le N\left(K,T\right)$.
Furthermore, if $T$ is a Euclidean ball then $N\left(K,T\right)=\overline{N}\left(K,T\right)$.

The classical notion of the separation number of $T$ in $K$ is closely
related to covering numbers and is defined to be the maximal number
of non-overlapping translates of $T$ which are centered in $K$;
\[
M\left(K,T\right)=\max\left\{ M\,:\, N\in\bbN,\,\,\exists x_{1},\dots x_{M}\in K\,;\,\,\left(x_{i}+T\right)\cap\left(x_{j}+T\right)=\emptyset\,\,\forall i\neq j\right\} .
\]
It is a standard equivalence relation that $N\left(K,T-T\right)\le M\left(K,T\right)\le N\left(K,T\right)$.
We also define the less conventional 
\[
\overline{M}\left(K,T\right)=\max\left\{ M\,:\, N\in\bbN,\,\,\exists x_{1},\dots x_{M}\in K\,;\,\,\left(x_{i}+T\right)\cap\left(x_{j}+T\right)\cap K=\emptyset\,\,\forall i\neq j\right\} .
\]
Note that the condition $\left(x_{i}+T\right)\cap\left(x_{j}+T\right)=\emptyset$
is equivalent to $x_{i}-x_{j}\not\in T-T$ which means that $M\left(K,T\right)=M\left(K,-T\right)=M\left(K,\frac{T-T}{2}\right)$.
Moreover, it is easily checked that for a convex $K$ and for a centrally
symmetric convex body $L$ (i.e., $L=-L$) we have $M\left(K,L\right)=\overline{M}\left(K,L\right)$
and thus by the last remark $M\left(K,T\right)=\overline{M}\left(K,T\right)$
for any convex body $T$. In the sequel, we will define weighted counterparts
for $M\left(K,T\right)$ and $\overline{M}\left(K,T\right)$ which
will not necessarily be equal, even in the convex and centrally symmetric
case.  \\

In order to define the weighted versions, let $\one_{A}$ denote the
indicator function of a set$A\sub\R^{n}$, equal to $1$ if $x\in A$
and $0$ if $x\not\in A$.
\begin{defn}
A sequence of pairs $S=\{(x_{i}\,,\,\omega_{i}):\, x_{i}\in\R^{n},\,\omega_{i}\in\R^{+}\}_{i=1}^{N}$
of points and weights is said to be a weighted covering of $K$ by
$T$ if for all $x\in K$ we have $\sum_{i=1}^{N}\omega_{i}\one_{x_{i}+T}\left(x\right)\ge1$.
The total weight of the covering is denoted by $\omega(S)=\sum_{i=1}^{N}\omega_{i}$.
The weighted covering number of $K$ by $T$ is defined to be the
infimal total weight over all weighted coverings of $K$ by $T$ and
is denoted by $N_{\omega}\left(K,T\right)$.
\end{defn}

\noindent One may consider only coverings $S=\{(x_{i}\,,\,\omega_{i}):\, x_{i}\in K,\,\omega_{i}\in\R^{+}\}_{i=1}^{N}$
with centers of $T$ in $K$. The corresponding weighted covering
number for such coverings, denoted here by $\overline{N}_{\omega}\left(K,T\right)$
is defined to be the infimal total weight over such coverings. Clearly,
$\overline{N}_{\omega}(K,T)\le N_{\omega}(K,T)$. The weighted notions
of covering and separation numbers corresponding to $\overline{N}\left(K,T\right)$
and $\overline{M}\left(K,T\right)$ were introduced in \cite{ArtsteinRaz2011}.
In this note, we shall focus on the weighted versions of $N\left(K,T\right)$
and $M\left(K,T\right)$. \\

Let us reformulate the above definitions in the language of measures.
Note that the covering condition $\sum_{i=1}^{N}\omega_{i}\one_{x_{i}+T}\left(x\right)\ge1$
for all $x\in K$ is equivalent to $\nu*\one_{T}\ge\one_{K}$ where
$\nu=\sum_{i=1}^{N}\omega_{i}\delta_{x_{i}}$ is the discrete measure
with masses $\omega_{i}$ centered at $x_{i}$ and where $*$ stands
for the convolution
\[
\left(\nu*\one_{T}\right)\left(x\right)=\int_{\R^{n}}\one_{T}\left(x-y\right)d\nu\left(y\right).
\]
Let ${\cal D}_{+}^{n}$ denote all non-negative discrete and finite
measures on $\R^{n}$ and let ${\rm supp}\left(\nu\right)\sub\R^{n}$
denote the support of a measure $\nu$ on $\R^{n}$. Thus, the weighted
covering numbers of $K$ by $T$ can be written as
\[
N_{\omega}\left(K,T\right)=\inf\left\{ \nu(\R^{n})\,:\,\nu*\one_{T}\ge\one_{K}\,,\nu\in{\cal D}_{+}^{n}\right\} 
\]
and
\[
\overline{N}_{\omega}\left(K,T\right)=\inf\left\{ \nu(\R^{n})\,:\,\nu*\one_{T}\ge\one_{K}\,,\nu\in{\cal D}_{+}^{n}\,\,{\rm with}\,\,{\rm supp}\left(\nu\right)\sub K\right\} .
\]

It is this natural to extend this notion of covering to general non-negative
measures. Let ${\cal B}_{+}^{n}$ denote all non-negative Borel regular
measures on $\R^{n}$. 
\begin{defn}
\noindent Let $K\sub\R^{n}$ be compact and let $T\subset\R^{n}$
be compact with non-empty interior. A non-negative measure $\mu\in{\cal B}_{+}^{n}$
is said to be a covering measure of $K$ by $T$ if $\mu*\one_{T}\ge\one_{K}$.
The corresponding weighted covering number is defined by
\[
N^{*}\left(K,T\right)=\inf\left\{ \int_{\R^{n}}d\mu\,:\,\mu*\one_{T}\ge\one_{K}\,,\mu\in{\cal B}_{+}^{n}\right\} .
\]
Clearly, $N^{*}\left(K,T\right)\le N_{\omega}\left(K,T\right)$. In
Proposition \ref{Prop:OptimalCov}, we show that the above infimum
is actually a minimum, that is, there exists an optimal covering Borel
measure of $K$ by $T$. Note that the set of optimal covering measures
is clearly convex. 
\end{defn}
The weighted notions of the separation are defined similarly; a measure
$\mu\in{\cal B}_{+}^{n}$ is said to be $T-$separated if $\mu*\one_{T}\le1$.
The weighted separation numbers, corresponding to $N_{\omega}\left(K,T\right)$,
$\overline{N}_{\omega}\left(K,T\right)$ and $N^{*}\left(K,T\right)$
are respectively defined by:

\[
M_{\omega}\left(K,T\right)=\sup\left\{ \int_{K}d\nu\,:\,\nu*\one_{T}\le1,\,\,\nu\in{\cal D}_{+}^{n}\right\} ,
\]
\[
\overline{M}_{\omega}\left(K,T\right)=\sup\left\{ \int_{K}d\nu\,:\,\,\forall x\in K\,\,\left(\nu*\one_{T}\right)\left(x\right)\le1,\,\,\nu\in{\cal D}_{+}^{n}\right\} 
\]
and
\[
M^{*}\left(K,T\right)=\sup\left\{ \int_{K}d\mu\,:\,\mu*\one_{T}\le1\,,\mu\in{\cal B}_{+}^{n}\right\} ,
\]
where again clearly $M_{\omega}\left(K,T\right)\le M^{*}\left(K,T\right)$.

\subsection{Main Results \label{sec:MainResults}}

Our first main result is a strong duality between weighted covering
and separation numbers; it turns out that $N^{*}\left(K,T\right)$
and $M^{*}\left(K,-T\right)$ can be interpreted as the outcome of
two dual problems in the sense of linear programming. Indeed, as in
\cite{ArtsteinRaz2011}, this observation is a key ingredient in the
proof of our first main result below which states that the outcome
of these dual problems is the same (we call this ``strong duality'').

\begin{thm}
\label{thm:StrongDuality}Let $K\sub\R^{n}$ be compact and let $T\sub\R^{n}$
be a compact with non-empty interior. Then 
\[
M_{\omega}\left(K,T\right)=M^{*}\left(K,T\right)=N^{*}\left(K,-T\right).
\]
\end{thm}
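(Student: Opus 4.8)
The plan is to read both $M^{*}\left(K,T\right)$ and $N^{*}\left(K,-T\right)$ as the two sides of a single infinite-dimensional linear program and to verify that there is no duality gap. Since ${\cal D}_{+}^{n}\sub{\cal B}_{+}^{n}$, the inequality $M_{\omega}\left(K,T\right)\le M^{*}\left(K,T\right)$ is immediate, so the whole statement reduces to the two inequalities $M^{*}\left(K,T\right)\le N^{*}\left(K,-T\right)$ (\emph{weak duality}) and $N^{*}\left(K,-T\right)\le M_{\omega}\left(K,T\right)$ (\emph{strong duality} together with discreteness). Throughout I would write the convolutions pointwise as $\left(\mu*\one_{T}\right)\left(x\right)=\mu\left(x-T\right)$ and $\left(\lambda*\one_{-T}\right)\left(x\right)=\lambda\left(x+T\right)$, so that a separating measure satisfies $\mu\left(x-T\right)\le1$ for all $x$, while a covering measure for $\left(K,-T\right)$ satisfies $\lambda\left(x+T\right)\ge1$ for all $x\in K$. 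Because mass placed outside $K$ never helps the separation objective and mass outside the compact set $Q:=K+T$ never helps any covering constraint, I may assume $\mathrm{supp}\left(\mu\right)\sub K$ and $\mathrm{supp}\left(\lambda\right)\sub Q$; both base spaces are then compact.

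For weak duality I would take any feasible pair, use $\one_{T}\left(x-y\right)=\one_{y+T}\left(x\right)=\one_{x-T}\left(y\right)$ together with Tonelli, and compute
\[
\mu\left(K\right)\le\int_{K}\lambda\left(y+T\right)d\mu\left(y\right)=\int_{\R^{n}}\int_{K}\one_{T}\left(x-y\right)d\mu\left(y\right)d\lambda\left(x\right)=\int_{\R^{n}}\mu\big(K\cap\left(x-T\right)\big)d\lambda\left(x\right)\le\lambda\left(\R^{n}\right),
\]
the last step using $\mu\left(x-T\right)\le1$. Taking the supremum over separating $\mu$ and the infimum over covering $\lambda$ yields $M^{*}\left(K,T\right)\le N^{*}\left(K,-T\right)$, hence the full chain $M_{\omega}\left(K,T\right)\le M^{*}\left(K,T\right)\le N^{*}\left(K,-T\right)$.

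For the reverse inequality I would introduce the bilinear Lagrangian $L\left(\mu,\lambda\right)=\mu\left(K\right)+\lambda\left(\R^{n}\right)-\int\!\!\int\one_{T}\left(x-y\right)d\mu\left(y\right)d\lambda\left(x\right)$ on the cones of nonnegative measures, and observe that $\sup_{\mu\ge0}\inf_{\lambda\ge0}L=M^{*}\left(K,T\right)$ while $\inf_{\lambda\ge0}\sup_{\mu\ge0}L=N^{*}\left(K,-T\right)$, so that strong duality is exactly the minimax equality for $L$. To make a minimax theorem applicable I would first bound the feasible regions: covering $K$ by $N\left(K,-T\right)$ translates of $-T$ shows $\mu\left(K\right)\le N\left(K,-T\right)$ for every separating $\mu$ and likewise $N^{*}\left(K,-T\right)\le N\left(K,-T\right)$, so I may restrict to the weak-$*$ compact convex sets $\{\mu\ge0,\,\mu\left(K\right)\le N\left(K,-T\right)\}$ on $K$ and $\{\lambda\ge0,\,\lambda\left(Q\right)\le N\left(K,-T\right)\}$ on $Q$ (Banach--Alaoglu). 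On these sets $L$ is affine in each variable, and Sion's minimax theorem then gives $M^{*}\left(K,T\right)=N^{*}\left(K,-T\right)$, with both extrema attained by weak-$*$ compactness (cf.\ Proposition~\ref{Prop:OptimalCov}).

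The hard part is that the kernel $\one_{T}$ is only upper semicontinuous, so the bilinear term in $L$ is not weak-$*$ continuous and Sion's theorem cannot be applied verbatim. I would resolve this using that $T$ has nonempty interior: sandwiching $\one_{T}$ between continuous functions $\varphi\le\one_{T}\le\psi$ that agree off an arbitrarily thin neighbourhood of $\partial T$, one runs the minimax for the regularized kernels and lets the regularization shrink, the error being a boundary-layer mass kept uniformly small by the a priori bounds. The same semicontinuity is what separates $M^{*}$ from $M_{\omega}$; to upgrade the optimal Borel separating measure $\mu^{*}$ to a discrete one, I would first pass to $\left(1-\eps\right)\mu^{*}$ to gain slack, then discretize it on a mesh-$\delta$ partition of $K$, placing each cell's mass at one of its points, so that the resulting $\nu$ satisfies $\nu\left(x-T\right)\le\left(1-\eps\right)\mu^{*}\big(x-\left(T+\delta\uball\right)\big)\le1$ once $\delta$ is small enough — the nonempty interior of $T$ giving the uniform control of the $\delta$-boundary layer by a fixed covering budget. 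Letting $\delta\to0$ produces discrete separating measures with $K$-mass converging to $M^{*}\left(K,T\right)$, whence $M_{\omega}\left(K,T\right)=M^{*}\left(K,T\right)$ and the theorem follows.
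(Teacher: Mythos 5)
Your weak-duality computation is correct and coincides with the paper's Proposition \ref{prop:WeakDuality}, and your closing step (discretizing an optimal Borel separating measure on a fine mesh, using the uniform shrinkage $\nu\left(x-T\right)\le\mu^{*}\left(x-\left(T+\delta\uball\right)\right)$) can be made rigorous; it is essentially Lemma \ref{lem:15Art}. The genuine gap is in the minimax step. First, a technical point you assert without proof: once you truncate to the weak-$*$ compact balls $\left\{ \mu\ge0:\mu\left(K\right)\le N\left(K,-T\right)\right\} $ and $\left\{ \lambda\ge0:\lambda\left(Q\right)\le N\left(K,-T\right)\right\} $, the inner $\inf_{\lambda}$ and $\sup_{\mu}$ are no longer the indicator-of-feasibility computations by which you identified $\sup\inf L=M^{*}\left(K,T\right)$ and $\inf\sup L=N^{*}\left(K,-T\right)$; they become penalized problems, and you must verify that a penalty budget of $N\left(K,-T\right)$ suffices to recover the constrained optimal values. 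Second, and decisively: since $\one_{T}$ is upper semicontinuous, the map $\mu\mapsto L\left(\mu,\lambda\right)$ is \emph{lower} semicontinuous in the maximizing variable, so Sion's theorem indeed fails and the entire burden falls on your regularization. There the claim that ``the error is a boundary-layer mass kept uniformly small by the a priori bounds'' is not a proof and is false as a general principle: optimal covering and separating measures are typically atomic and can place all of their mass exactly where translates of $\partial T$ sit, so no a priori bound on total mass controls the boundary layer.

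What the upper regularization $\one_{T}\le\psi\le\one_{T+\delta\uball}$ actually yields, writing $N_{\psi}^{*},M_{\psi}^{*}$ for the values with kernel $\psi$, is the chain $N^{*}\left(K,-\left(T+\delta\uball\right)\right)\le N_{\psi}^{*}=M_{\psi}^{*}\le M^{*}\left(K,T\right)$; to conclude you then need $\lim_{\delta\to0^{+}}N^{*}\left(K,-\left(T+\delta\uball\right)\right)=N^{*}\left(K,-T\right)$, and that is a theorem in its own right (the paper's Proposition \ref{prop:ContCoverDisc}), proved by extracting a weak-$*$ limit of near-optimal covering measures and invoking monotone convergence --- not by uniform smallness of a boundary layer. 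Your lower sandwich $\varphi\le\one_{T}$ points in the dangerous direction: it would require continuity of these functionals under shrinking the body from the inside, which is exactly the direction that is not known in general and is why the discrete-measure duality $N_{\omega}\left(K,T\right)=M_{\omega}\left(K,-T\right)$ remains open (see Remark \ref{rem:Orit-paper}). For comparison, the paper avoids the infinite-dimensional minimax altogether: it discretizes both programs on a $\delta$-net, applies finite-dimensional LP duality there (equation \eqref{eq:DiscretizeDuality}), and passes to the limit via Lemmas \ref{lem:14Art}--\ref{lem:15Art} and Proposition \ref{prop:ContCoverDisc}. Your route can be completed, but only by supplying precisely that continuity proposition and the truncation analysis, neither of which your sketch contains.
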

\begin{rem}
While it is not clear, so far, whether strong duality also holds for fractional
covering numbers with respect to discrete measures, namely whether
$N_{\omega} (K,T )=M_{\omega} (K,-T )$, one may show that 
\[\lim_{\delta \to 0^+}N_{\omega} (K,-(1+\delta)T ) = \lim_{\delta\to 0} M_{\omega} (K,(1+\delta)T )\le M_{\omega}(K,T). \] 
In particular, for almost every $t>0$
\[
M_{\omega} (K,tT )=M^{*} (K,tT )=N^{*} (K,-tT )=N_{\omega} (K,tT ).
\]
See discussion in Section \ref{sub:Strong-duality}, Remark \ref{rem:Orit-paper}.
\end{rem}
As a consequence of Theorem \ref{thm:StrongDuality}, together with
the well-known homothety equivalence between classical covering and
separation numbers $N\left(K,T-T\right)\le M\left(K,T\right)\le N\left(K,T\right)$,
we immediately get the following equivalence relation between the
classical and weighted covering numbers (which has also appeared in
\cite{ArtsteinRaz2011} for the pair $\overline{M}_{\omega},\overline{N}_{\omega}$). 
\begin{cor}
\label{cor:HomothetyEq.}
Let $K\sub\R^{n}$ be compact and let $T\sub\R^{n}$
be compact with non-empty interior. Then 
Then
\[
N\left(K,T-T\right)\le N_{\omega}\left(K,T\right)\le N\left(K,T\right)
\]
\end{cor}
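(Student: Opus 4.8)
The plan is to derive Corollary \ref{cor:HomothetyEq.} directly from Theorem \ref{thm:StrongDuality} together with the classical chain of inequalities $N\left(K,T-T\right)\le M\left(K,T\right)\le N\left(K,T\right)$ stated in the excerpt. The key observation is that the two claimed inequalities are of very different natures: the upper bound $N_{\omega}\left(K,T\right)\le N\left(K,T\right)$ is essentially trivial, since any classical covering is a weighted covering with all weights equal to $1$, whereas the lower bound $N\left(K,T-T\right)\le N_{\omega}\left(K,T\right)$ is where the real content lies and where I expect strong duality to be used.

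For the upper bound, I would argue as follows. If $N\left(K,T\right)=N$, there exist $x_{1},\dots,x_{N}$ with $K\sub\bigcup_{i=1}^{N}\left(x_{i}+T\right)$. Taking all weights $\omega_{i}=1$ gives a measure $\nu=\sum_{i=1}^{N}\delta_{x_{i}}\in{\cal D}_{+}^{n}$ satisfying $\nu*\one_{T}=\sum_{i=1}^{N}\one_{x_{i}+T}\ge\one_{K}$, so $\nu$ is a weighted covering of total weight $N$. Hence $N_{\omega}\left(K,T\right)\le N=N\left(K,T\right)$.

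For the lower bound, the idea is to pass through the separation number and invoke strong duality. By Theorem \ref{thm:StrongDuality} applied with $-T$ in place of $T$, we have $M_{\omega}\left(K,-T\right)=M^{*}\left(K,-T\right)=N^{*}\left(K,T\right)$; and since $N^{*}\left(K,T\right)\le N_{\omega}\left(K,T\right)$ by definition, it suffices to bound $M_{\omega}\left(K,-T\right)$, or more directly $N^{*}\left(K,T\right)$, from below by $N\left(K,T-T\right)$. Here the natural route is to use the classical inequality $N\left(K,T-T\right)\le M\left(K,T\right)$ and to show that the classical separation number $M\left(K,T\right)$ is dominated by the weighted separation number $M^{*}$ (respectively $M_{\omega}$): indeed, if $x_{1},\dots,x_{M}\in K$ give non-overlapping translates of $T$, equivalently $x_{i}-x_{j}\notin T-T$ for $i\neq j$, then the unit-weight measure $\nu=\sum_{i=1}^{M}\delta_{x_{i}}$ ought to be $\left(-T\right)$-separated in the sense that $\nu*\one_{-T}\le 1$, giving $M\le M_{\omega}\left(K,-T\right)$. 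Combining $N\left(K,T-T\right)\le M\left(K,T\right)\le M_{\omega}\left(K,-T\right)=N^{*}\left(K,T\right)\le N_{\omega}\left(K,T\right)$ then yields the claim.

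The main obstacle I anticipate is getting the centering and sign conventions exactly right in the separation step: the weighted separation condition $\nu*\one_{T}\le 1$ involves a pointwise convolution, and one must check carefully that non-overlapping translates of $T$ centered in $K$ do produce a measure satisfying the correct separated inequality with the correct body ($T$ versus $-T$ versus $\frac{T-T}{2}$), using the remarks in the excerpt that $M\left(K,T\right)=M\left(K,-T\right)=M\left(K,\tfrac{T-T}{2}\right)$. I would also need to confirm that the comparison $M\left(K,T\right)\le M_{\omega}\left(K,-T\right)$ holds with the appropriate body, since the strong duality theorem pairs $N^{*}\left(K,-T\right)$ with $M^{*}\left(K,T\right)$ and one must track which reflection enters. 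Once these bookkeeping issues are settled, the corollary follows immediately by concatenating the two displayed chains of inequalities.
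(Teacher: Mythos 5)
Your proof is correct and follows essentially the same route the paper intends: the upper bound by viewing a classical covering as a unit-weight covering, and the lower bound via $N(K,T-T)\le M(K,T)\le M_{\omega}(K,-T)=N^{*}(K,T)\le N_{\omega}(K,T)$, with the sign bookkeeping handled correctly (note, as the paper remarks, that only the weak duality $M^{*}(K,-T)\le N^{*}(K,T)$ of Proposition \ref{prop:WeakDuality} is actually needed here, not the full strength of Theorem \ref{thm:StrongDuality}).
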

\noindent We remark that Corollary \ref{cor:HomothetyEq.} is actually
implied by the weak duality $M^{*}\left(K,-T\right)\le N^{*}\left(K,T\right)$
which we prove in Proposition \ref{prop:WeakDuality} below, the proof
of which is relatively simple. Similarly, we shall prove in Proposition
\ref{prop:WeakDuality} that $\overline{M}_{\omega}\left(K,-T\right)\le\overline{N}_{\omega}\left(K,T\right)$
providing an alternative short proof for the weak duality result in
\cite[Theorem 6]{ArtsteinRaz2011}.\\

For a centrally symmetric convex set $T$, Corollary \ref{cor:HomothetyEq.}
reads $N\left(K,2T\right)\le N_{\omega}\left(K,T\right)\le N\left(K,T\right)$.
Although this ``constant homothety'' equivalence of classical and
weighted covering is useful, it turns out to be insufficient in certain
situations. To that end, we introduce our second main result, in which
the homothety factor $2$ is replaced by a factor $1+\delta$ with
$\delta>0$ arbitrarily close to $0$. This gain is diminished by
an additional logarithmic factor; such a result is a reminiscent of
Lov�sz's \cite{Lovasz1975} well-known inequality for fractional covering
numbers of hypergraphs.
\begin{thm}
\label{thm:DeltaEquiv}Let $K\sub\R^{n}$ be compact and let $T_{1},T_{2}\sub\R^{n}$
be compact with non-empty interior. Then 
\[
N\left(K,T_{1}+T_{2}\right)\le\ln\left(4\overline{N}\left(K,T_{2}\right)\right)\left(N_{\omega}\left(K,T_{1}\right)+1\right)+\sqrt{\ln\left(4\overline{N}\left(K,T_{2}\right)\right)\left(N_{\omega}\left(K,T_{1}\right)+1\right)}
\]

\end{thm}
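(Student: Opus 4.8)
The plan is to prove Theorem~\ref{thm:DeltaEquiv} by a probabilistic rounding argument, converting an optimal fractional (weighted) covering of $K$ by $T_1$ into an honest integer covering of $K$ by $T_1+T_2$, in the spirit of Lov\'asz's bound on the integrality gap for fractional set cover. The key idea is that a weighted covering $\nu * \one_{T_1} \ge \one_K$ lets us define a probability measure (after normalizing by $N_\omega(K,T_1)$), from which we sample translate-centers independently; a single sampled translate $x+T_1$ covers a fixed point $x\in K$ with probability at least $1/N_\omega(K,T_1)$, so sampling enough centers covers each fixed point with high probability. The role of $T_2$ is to discretize: rather than demanding that the sampled translates of $T_1$ literally cover every point of $K$ (an uncountable condition), I would use a finite net of points whose $T_2$-neighborhoods cover $K$, so that covering the net with copies of $T_1$ yields a covering of all of $K$ by copies of $T_1+T_2$. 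The size of this net is controlled by $\overline{N}(K,T_2)$, which is exactly where the logarithmic factor $\ln(4\overline{N}(K,T_2))$ enters through a union bound.

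First I would fix an optimal (or near-optimal) weighted covering measure realizing $N_\omega(K,T_1)$; writing $w = N_\omega(K,T_1)$ for its total mass, I normalize to get a probability measure $P = \nu/w$ supported on the centers. For a fixed target point $y$, the probability that an independently sampled center $X$ fails to place $y$ inside $X+T_1$ is at most $1 - 1/w$, using that $\nu*\one_{T_1}(y)\ge 1$. Next I would choose a finite set $\{y_1,\dots,y_m\}$ of centers of translates of $T_2$ that cover $K$ with the centers lying in $K$, so that $m \le \overline{N}(K,T_2)$; the defining property of $\overline{N}$ guarantees such a net exists with this cardinality. The crucial geometric observation is that if a sampled translate $x+T_1$ contains the net point $y_j$, and $y_j + T_2 \supseteq$ (the relevant piece of) $K$, then the larger translate $x+(T_1+T_2)$ covers the corresponding portion of $K$: concretely, if $z \in K$ lies in $y_j + T_2$ and $y_j \in x + T_1$, then $z \in x + T_1 + T_2$. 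Thus it suffices to cover the finite net $\{y_j\}$ by translates $x+T_1$, which I arrange probabilistically.

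Then I would run the standard union-bound computation: after drawing $k$ independent centers according to $P$, the probability that some net point $y_j$ is missed by all $k$ translates is at most $m(1-1/w)^k \le \overline{N}(K,T_2)\,e^{-k/w}$. Choosing $k$ slightly larger than $w\ln(\overline{N}(K,T_2))$ makes this probability strictly less than $1$, which by the probabilistic method guarantees the existence of $k$ translates of $T_1$ covering the net, hence $k$ translates of $T_1+T_2$ covering $K$, giving $N(K,T_1+T_2)\le k$. To extract the precise stated bound with the factor $4$ and the additive square-root correction term, I would optimize the choice of the integer $k$ carefully: set $k = \lceil w\ln(4\overline{N}(K,T_2)) + \text{(lower-order)}\rceil$ and track the contributions of the $+1$ (accounting for $w$ not being an integer and for the rounding $\lceil\cdot\rceil$) and of the square-root term, which arises from a second-order expansion when balancing the failure probability against the ceiling. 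A clean way to get the $\sqrt{\cdot}$ correction is to bound the expected number of \emph{uncovered} net points after $k$ draws and then add translates to cover each leftover point one-by-one; the expected leftover count is $m(1-1/w)^k$, and Markov's inequality together with a judicious split of $k$ into a main term plus a correction yields exactly the $a + \sqrt a$ shape with $a = \ln(4\overline{N}(K,T_2))(N_\omega(K,T_1)+1)$.

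The main obstacle I anticipate is \emph{not} the probabilistic core, which is routine, but rather pinning down the exact constants so that the final inequality reads precisely $a+\sqrt a$ with $a=\ln(4\overline N(K,T_2))(N_\omega(K,T_1)+1)$ rather than some looser $Cw\ln m$ bound. This requires care on two fronts: handling the non-integrality of the optimal weighted covering (the $+1$ next to $N_\omega$ absorbs the gap between the real-valued total mass $w$ and the number of discrete draws, and justifies replacing the infimum in the definition of $N_\omega$ by a genuine minimizer or a sufficiently good approximant), and choosing the number of draws $k$ so that the two-stage argument (sample a main batch, then patch up the few missed net points) balances optimally. I would expect to use the inequality $1-t\le e^{-t}$ and a calibrated choice such as $k = \lceil a + \sqrt a\,\rceil$, then verify that with this $k$ the expected number of uncovered net points is at most the slack that $\sqrt a$ buys, completing the count $N(K,T_1+T_2)\le a+\sqrt a$.
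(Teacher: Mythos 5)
Your proposal is correct and shares the paper's overall architecture: randomized rounding of a near-optimal fractional cover of $K$ by $T_1$, discretization via a net of $\overline{N}\left(K,T_{2}\right)$ translates of $T_2$ centered in $K$, the observation that covering the net points by translates of $T_1$ covers $K$ by translates of $T_1+T_2$, and a union bound over the net. Where you genuinely diverge is the random selection mechanism. The paper first makes all weights equal to $1/M$ (rational approximation plus repetition) and then includes each point \emph{independently with probability} $S/M$; the cardinality of the resulting set is therefore itself random and must be controlled by a Chernoff bound, and it is precisely this concentration slack, balanced against the union bound via the choices $S=\ln\left(4\overline{N}\left(K,T_{2}\right)\right)$ and $L=1+1/\sqrt{S(N+1)}$, that produces both the factor $4$ and the additive $\sqrt{a}$ term. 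You instead draw a \emph{fixed} number $k$ of i.i.d.\ centers from the normalized measure $\nu/w$, so the output size is deterministic and no Chernoff bound is needed; a single union bound gives failure probability at most $\overline{N}\left(K,T_{2}\right)e^{-k/w}$, hence $N\left(K,T_{1}+T_{2}\right)\le\lfloor w\ln\overline{N}\left(K,T_{2}\right)\rfloor+1$, which is in fact slightly stronger than the stated bound and implies it (using $N_{\omega}\left(K,T_{1}\right)\ge1$, so the $\ln 4$ in $a$ alone absorbs the $+1$). Your speculations about where the $\sqrt{a}$ term ``comes from'' (Markov plus patching leftover net points, or a second-order expansion of the ceiling) do not match the paper's mechanism, but this is immaterial: your fixed-sample-size route lands below $a+\sqrt{a}$ without needing to manufacture that term at all. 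Each approach buys something: yours is shorter and avoids the Chernoff computation entirely; the paper's Bernoulli-selection version is the one that generalizes verbatim to the setting where one wants to reuse the same random subset for several simultaneous constraints, at the cost of the extra $\sqrt{a}$.
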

We remark that for the proof of our application in Section \ref{sec:Hadwiger}
below, we shall use $T_{1}=\delta T$ and $T_{2}=\left(1-\delta\right)T$
for $0<\delta<1$ and a single convex body $T$. It is also worth
mentioning that Theorem \ref{thm:DeltaEquiv} holds for $\overline{N}_{\omega}\left(K,T\right)$
and $\overline{N}\left(K,T\right)$ as well (with the exact same proof).

\subsection{Additional inequalities}

Let $\vol\left(A\right)$ denote the Lebesgue volume of a set $A\sub\R^{n}$.
The classical covering and separation numbers satisfy simple volume
bounds. Such volume bounds also hold for the weighted case, and  turn
out to be quite useful.
\begin{thm}
\label{thm:VolumeBounds}Let $K\sub\R^{n}$ be compact and let $T\sub\R^{n}$
be compact with non-empty interior. Then
\[
\max\left\{ \frac{{\rm Vol} (K )}{{\rm Vol} (T )},1\right\} \le N^{*} (K,T )\le\frac{{\rm Vol} (K-T )}{{\rm Vol} (T )}.
\]

\end{thm}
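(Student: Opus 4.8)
The statement to prove is the volume bound
\[
\max\left\{ \frac{\vol(K)}{\vol(T)},1\right\} \le N^{*}(K,T)\le\frac{\vol(K-T)}{\vol(T)}.
\]

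\textbf{Lower bound.} The plan is to integrate the covering inequality. If $\mu$ is any covering measure of $K$ by $T$, then $\mu*\one_{T}\ge\one_{K}$ pointwise, so integrating over $\R^{n}$ gives
\[
\int_{\R^{n}}(\mu*\one_{T})\,dx\ge\int_{\R^{n}}\one_{K}\,dx=\vol(K).
\]
By Fubini, the left-hand side equals $\mu(\R^{n})\cdot\vol(T)$, since $\int_{\R^{n}}\one_{T}(x-y)\,dx=\vol(T)$ for every fixed $y$. Dividing by $\vol(T)$ yields $\mu(\R^{n})\ge\vol(K)/\vol(T)$, and taking the infimum over covering measures gives $N^{*}(K,T)\ge\vol(K)/\vol(T)$. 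The bound $N^{*}(K,T)\ge1$ is immediate: picking any $x_{0}\in K$, evaluating the covering inequality at $x_{0}$ gives $(\mu*\one_{T})(x_0)\ge 1$, and since $(\mu * \one_T)(x_0) = \mu(x_0 - T) \le \mu(\R^n)$, we get $\mu(\R^{n})\ge1$.

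\textbf{Upper bound.} Here I would exhibit an explicit covering measure achieving the stated weight. The natural candidate is a suitably normalized restriction of Lebesgue measure to $K-T$: set $\mu=\frac{1}{\vol(T)}\,\lambda\big|_{K-T}$, where $\lambda$ is Lebesgue measure. Its total mass is exactly $\vol(K-T)/\vol(T)$, matching the target. It remains to verify that $\mu$ is a covering measure, i.e.\ $(\mu*\one_{T})(x)\ge1$ for every $x\in K$. Computing,
\[
(\mu*\one_{T})(x)=\frac{1}{\vol(T)}\int_{K-T}\one_{T}(x-y)\,dy=\frac{\vol\big((K-T)\cap(x-T)\big)}{\vol(T)}.
\]
For $x\in K$ one has $x-T\subseteq K-T$, so the intersection is all of $x-T$, whose volume is $\vol(T)$; hence the ratio equals $1$. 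This shows $\mu$ is a valid covering measure and gives $N^{*}(K,T)\le\vol(K-T)/\vol(T)$.

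\textbf{Main obstacle.} The two bounds are essentially routine once the right covering measure is identified; the only genuine subtlety is the inclusion $x-T\subseteq K-T$ for $x\in K$, which is what forces the covering set to be $K-T$ rather than something smaller, and this is precisely why the upper bound features $\vol(K-T)$ and not $\vol(K)$. I expect no serious difficulty, though one should note that $\mu$ as defined is a general Borel measure rather than a discrete one, so this argument bounds $N^{*}$ directly; the analogous bounds for $N_{\omega}$ would then follow from the strong duality of Theorem \ref{thm:StrongDuality} or from approximation.
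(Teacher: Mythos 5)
Your proof is correct and matches the paper's own argument: the lower bound is the same Fubini computation plus evaluating the covering inequality at a single point, and your explicit covering measure $\frac{1}{\vol(T)}\lambda|_{K-T}$ is precisely the alternative construction the paper records (its primary route for the upper bound merely detours through the strong duality $N^{*}(K,T)=M^{*}(K,-T)$ and the analogous separation bound, which your direct argument avoids). No gaps.
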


\begin{rem}
\label{rem:MneqN}Let us show, by using the above volume bounds, that
classical and weighted covering numbers are not equal in general,
even for centrally symmetric convex bodies such as a cube and a ball
(for a simple $2-$dimensional example, see the last part of Remark
\ref{rem:OptimalCovSep}). Namely, we show that $N_{\omega} (K,T )\neq N (K,T )$
where $T=B_{2}^{n}$ is the unit ball in $\R^{n}$ and $K=[-R,R]^{n}$
for a large enough $R$. Indeed, it was shown in \cite{ErdosRogers53}
that the lower limit of the density of covering a cube by balls, defined
as the limit of the ratio $N ([-R,R]^{n},B_{2}^{n} )\cdot\vol (B_{2}^{n} )/(2R)^{n}$,
as $R$ tends to infinity is bounded from below by $16/15-\eps_{n}$
where $\eps_{n}\to0$ as $n\to\infty$. However, by our volume bounds
in Theorem \ref{thm:VolumeBounds}, it follows that the weighted covering
density $N_{\omega} ([-R,R]^{n},B_{2}^{n} )\cdot\vol (B_{2}^{n} )/(2R)^{n}$
approaches $1$ as $R\to\infty$. Note that by Proposition \ref{prop:WeakDuality}
below, this also means that $M (Q,B_{2}^{n} )\neq N(Q,B_{2}^{n})$
for a large enough cube and dimension.
\end{rem}

\subsection{An application}

A famous conjecture, known as the Levi-Hadwiger or the Gohberg-Markus
covering problem, was posed in \cite{Levi55}, \cite{Hadwiger57}
and \cite{GohMar60}. It states that in order to cover a convex set
by slightly smaller copies of itself, one needs at most $2^{n}$ copies. 
\begin{conjecture*}
Let $K\sub\R^{n}$ be a convex body with non empty interior. Then
there exists $0<\lambda<1$ such that 
\[
N(K,\lambda K)\le2^{n}.
\]
Equivalently, $N (K,{\rm int} (K ) )\le 2^{n}$.
Moreover, equality holds if and only if $K$ is a parallelotope. 
\end{conjecture*}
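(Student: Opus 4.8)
The plan is to attack the conjecture through its fractional (weighted) relaxation, where the linear-programming structure developed above becomes available, and then to confront the passage back to an honest integer covering. Recall the standard Boltyanski dictionary: covering $K$ by homothets $\lambda K$ with $0<\lambda<1$ is equivalent to illuminating the boundary of $K$, so the target $N(K,\inte K)\le 2^{n}$ is an illumination statement, and its natural relaxation is the weighted number $N_{\omega}(K,\lambda K)$ as $\lambda\to 1^{-}$. For this relaxed quantity Theorem \ref{thm:StrongDuality} supplies a dual packing description, $N^{*}(K,-T)=M^{*}(K,T)=M_{\omega}(K,T)$, while Theorem \ref{thm:VolumeBounds} supplies volume control; the hope is to solve the relaxation exactly and then round.

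The first concrete step is the fractional bound for centrally symmetric $K$. Writing $K=-K$ and using $aK+bK=(a+b)K$ for convex $K$, the upper estimate of Theorem \ref{thm:VolumeBounds} gives $N^{*}(K,\lambda K)\le \vol(K-\lambda K)/\vol(\lambda K)=\bigl((1+\lambda)/\lambda\bigr)^{n}$, which tends to $2^{n}$ as $\lambda\to 1^{-}$. Combined with Theorem \ref{thm:StrongDuality} and the Remark following it (which yields $N_{\omega}=N^{*}$ for almost every $\lambda$), this settles the fractional/weighted Levi-Hadwiger bound by $2^{n}$ in the symmetric case and confirms Naszodi's fractional illumination conjecture there. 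The optimal covering measure produced by the dual problem should, by symmetry, concentrate on the vertices of the difference body, which is also where the equality discussion below must start.

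The main obstacle, and the reason the classical statement remains open, is twofold. First, de-fractionalization: converting a measure of total mass $\to 2^{n}$ into a genuine cover by integer translates is only possible through Theorem \ref{thm:DeltaEquiv} with $T_{1}=\delta K$, $T_{2}=(\lambda-\delta)K$ (so that $T_{1}+T_{2}=\lambda K$), and this rounding costs a factor $\ln\bigl(4\overline{N}(K,(\lambda-\delta)K)\bigr)\sim n\ln C$, inflating $2^{n}$ to $2^{n}\cdot O(n)$. Thus the volume-plus-duality-plus-rounding route recovers only the qualitatively correct, currently best known bound; reaching the sharp constant $2^{n}$ would require a lossless selection principle forcing the LP optimum of a non-parallelotope onto a support of size strictly below $2^{n}$, for which no technique is known. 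Second, for a \emph{non}-symmetric body the volume ratio $\vol(K-\lambda K)/\vol(\lambda K)\to \vol(K-K)/\vol(K)$ is, by Brunn--Minkowski, already $\ge 2^{n}$ with equality only in the symmetric case, and can be as large as $\binom{2n}{n}$ for a simplex by Rogers--Shephard; hence the volume bound does not even deliver the \emph{fractional} value $2^{n}$ outside the symmetric class, so the general conjecture lies genuinely beyond the present machinery.

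Finally, the equality characterization is a rigidity assertion that must be layered on top of the count. Here I would argue that $N(K,\lambda K)=2^{n}$ for every $\lambda<1$ forces equality throughout the volume estimate of Theorem \ref{thm:VolumeBounds}, i.e.\ $\vol(K-\lambda K)=2^{n}\vol(\lambda K)$ in the limit; by the Brunn--Minkowski equality condition this pins $K$ to a centrally symmetric body, and the tiling obstruction implicit in equality of the covering-by-translates bound then forces $K-K$, and hence $K$, to be a parallelotope. Making this rigidity quantitative and coupling it to the exact integer count is precisely the unresolved heart of the problem; I expect the weighted-covering framework to yield the sharp statement, together with its equality case, only for the symmetric \emph{fractional} illumination number, and not the full classical conjecture as worded.
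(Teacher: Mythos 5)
You are right to treat this statement as what it is: an open conjecture. The paper does not prove it either --- it records it as the Levi-Hadwiger conjecture and then proves two weaker results, and your reconstruction of the inequality parts matches the paper's actual route almost exactly. Your fractional symmetric bound, $N_{\omega}(K,\lambda K)\le \vol(K-\lambda K)/\vol(\lambda K)=\left(\frac{1+\lambda}{\lambda}\right)^{n}\to 2^{n}$, is precisely the volume-bound argument the paper acknowledges (in the proof of Theorem \ref{thm:WeightedHadwiger}) would give the symmetric bound; your Rogers--Shephard observation explains the $\binom{2n}{n}$ in the non-symmetric case of that theorem; and your de-fractionalization via Theorem \ref{thm:DeltaEquiv} with $T_{1}+T_{2}=\lambda K$, losing a factor of order $n\ln n$, is exactly Corollary \ref{cor:Hadwiger}. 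Your assessment that the machinery yields the sharp constant only fractionally, and only a $2^{n}\cdot O(n\log n)$ classical bound, is the paper's own conclusion. (Minor point: the detour through strong duality and the almost-every-$\lambda$ remark is unnecessary for the upper bound --- Proposition \ref{prop:volumeUpBound} together with Lemma \ref{lem:GCC} bounds $N_{\omega}$ directly for convex $T$ --- but it is harmless.)

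The genuine gap is in your equality-case sketch, and it is worth seeing why it cannot be repaired along the lines you propose. Your mechanism --- ``$N=2^{n}$ forces equality in the volume estimate, then apply Brunn--Minkowski rigidity'' --- fails for two reasons. First, logically: the volume bound is an upper bound on $N_{\omega}\le N$, so equality of the classical number at $2^{n}$ forces nothing about the volume ratio. Second, and more fatally: for \emph{every} centrally symmetric $K$ the ratio $\vol(K-\lambda K)/\vol(\lambda K)=\left(\frac{1+\lambda}{\lambda}\right)^{n}$ tends to $2^{n}$, independently of the shape of $K$; the volume upper bound is asymptotically tight for the ball just as for the cube, so ``equality in the limit'' carries no rigidity information whatsoever and cannot separate parallelotopes from other symmetric bodies (for which $\lim_{\lambda\to 1^{-}}N_{\omega}(K,\lambda K)$ is in fact strictly below $2^{n}$). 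The paper's actual equality analysis is forced to work at the level of the \emph{pointwise} covering condition rather than the total mass: for the uniform measure $d\mu=2^{n}\one_{K}\,dx/\vol(K)$ one has $(\mu*\one_{\inte{K}})(x)=2^{n}\vol(K\cap(x+K))/\vol(K)$, and via the inclusion $K\cap(x+K)\supseteq\frac{K+x}{2}$ equality at the value $2^{n}$ forces the existence of extremal points with $K\cap(x_{0}+K)=\frac{K+x_{0}}{2}$; a perturbation argument (shrinking $\mu$ by a factor $c<1$ and adding atoms at the exceptional points) shows there must be at least $2^{n}$ such extremal points; Schneider's Lemma \ref{lem:CcapC} converts each homothetic intersection into a cone representation $K=(x_{i}-C_{i})\cap(C_{i}-x_{i})$; these representations show the $2^{n}$ points form an antipodal set, and the Danzer--Gr\"unbaum theorem (Theorem \ref{thm:DG}) then delivers the parallelotope. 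None of these ingredients --- the pointwise analysis, the cone lemma, antipodality --- appears in your sketch, and note also that even this settles only the \emph{weighted} equality case (Conjecture \ref{conj:WeightedHadwiger} for symmetric bodies); the classical equality characterization remains open, as you correctly suspected.
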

This problem has drawn much attention over the years, but only little
has been unraveled so far. We mention that Levi confirmed the conjecture
for the plane, and that Lassak confirmed it for centrally symmetric
bodies in $\R^{3}$. The currently best known general upper bound
for $n\ge3$ is ${2n \choose n}\left(n\ln n+n\ln\ln n+5n\right)$
and the best bound for centrally symmetric convex bodies is $2^{n}$$\left(n\ln n+n\ln\ln n+5n\right)$,
both of which are simple consequences of Rogers' bound for the asymptotic
lower densities for covering the whole space by translates of a general
convex body, see \cite{RogersZong}. For a comprehensive survey of this problem and the aforementioned
results see \cite{BrassMoser05}.

It is natural, after introducing weighted covering, to formulate the
Levi-Hadwiger covering problem for the case of weighted covering. 
\begin{conjecture}
\label{conj:WeightedHadwiger}Let $K\sub\R^{n}$ be a convex body.
Then ${\displaystyle \lim_{\lambda\to1^{-}}N_{\omega}\left(K,\lambda K\right)\le2^{n}.}$
Moreover, equality holds if and only if $K$ is a parallelotope. 
\end{conjecture}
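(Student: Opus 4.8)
The plan is to decouple the inequality $\lim_{\lambda\to1^-}N_\omega(K,\lambda K)\le 2^n$, which I expect to hold for \emph{every} convex body, from the equality characterization, whose ``only if'' direction is the genuine difficulty. After translating so that $0\in\inte{K}$, note that $\lambda\mapsto N_\omega(K,\lambda K)$ is non-increasing on $(0,1)$ (since $\lambda_1 K\sub\lambda_2 K$ for $\lambda_1<\lambda_2$ makes any $\lambda_1 K$-covering a $\lambda_2 K$-covering), so the limit exists. For a.e. $\lambda$ the Remark following Theorem \ref{thm:StrongDuality} gives $N_\omega(K,\lambda K)=N^*(K,-\lambda K)$, and the upper volume bound of Theorem \ref{thm:VolumeBounds}, applied to covering $K$ by $-\lambda K$, yields
\[
N^*(K,-\lambda K)\le\frac{\vol\big(K-(-\lambda K)\big)}{\vol(-\lambda K)}=\frac{\vol\big(K+\lambda K\big)}{\vol(\lambda K)}.
\]
The crucial gain is that convexity forces $K+\lambda K=(1+\lambda)K$, so \emph{no difference body and hence no Rogers--Shephard loss appears}; the right-hand side equals $\big((1+\lambda)/\lambda\big)^n$. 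Passing to $\lambda\to1^-$ along a full-measure sequence and invoking monotonicity gives $\lim_{\lambda\to1^-}N_\omega(K,\lambda K)\le 2^n$ for all convex $K$.

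For the ``if'' direction of the equality case I would use affine invariance of $N_\omega$ to reduce to the cube $Q=[-1,1]^n$. Its $2^n$ vertices satisfy: any two distinct vertices differ by $2$ in some coordinate, whereas a translate of $\lambda Q$ can contain two points only if their difference lies in $\lambda Q-\lambda Q=2\lambda Q$, which has width $2\lambda<2$. Hence no translate of $\lambda Q$ covers two vertices. Consequently, in any weighted covering $\{(x_i,\omega_i)\}$ the index sets $\{i:\,c\in x_i+\lambda Q\}$ for the various vertices $c$ are pairwise disjoint, and the covering inequality at each vertex forces $\sum_i\omega_i\ge 2^n$. Thus $N_\omega(Q,\lambda Q)\ge 2^n$, and combined with the first paragraph the limit equals $2^n$ for every parallelotope.

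The ``only if'' direction is where I expect the real work. Suppose $\lim_{\lambda\to1^-}N_\omega(K,\lambda K)=2^n$. By strong duality (Theorem \ref{thm:StrongDuality}) one has $N_\omega(K,\lambda K)=M^*(K,\lambda K)$ for a.e.\ $\lambda$, so there exist nonnegative packing measures $\mu_\lambda$, which one may take supported in $\bar K$, with $\mu_\lambda*\one_{\lambda K}\le1$ and total mass tending to $2^n$. I would extract a weak-$*$ limit $\mu$ (mass $2^n$, supported in $\bar K$, $\mu*\one_K\le1$) and then run an averaging argument: since
\[
\int_{\R^n}\big(\mu*\one_K\big)(x)\,dx=2^n\,\vol(K),
\]
while $\mu*\one_K$ is supported in $K+K=2K$, a set of volume $2^n\vol(K)$, and is pointwise $\le1$, one is forced to conclude $\mu*\one_K=\one_{2K}$ almost everywhere. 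This is an exact tiling identity, and the remaining task is a rigidity statement: a convex body admitting $\mu*\one_K=\one_{2K}$ for some nonnegative $\mu$ must be a parallelotope (with $\mu$ the sum of unit masses at its $2^n$ vertices).

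This last rigidity step is the main obstacle. For centrally symmetric $K$ I would attack it via the Fourier transform: the identity becomes
\[
\hat\mu(\xi)\,\widehat{\one_K}(\xi)=2^n\,\widehat{\one_K}(2\xi),
\]
so on the zero set of $\widehat{\one_K}$ the right-hand side must vanish, which constrains that zero set to be lattice-like and ought to single out parallelotopes among all convex bodies. For general (non-symmetric) $K$, however, both this spectral rigidity and the analytic justification that the constraint $\mu_\lambda*\one_{\lambda K}\le1$ and the mass $2^n$ genuinely survive the limit $\lambda\to1^-$ (the boundary of $2K$ being where mass could escape) are considerably more delicate, and I expect this is precisely the point at which the symmetric case is tractable while the full conjecture requires new input.
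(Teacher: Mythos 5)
Your first paragraph contains a sign error in the duality that invalidates the claimed $2^{n}$ bound for general convex bodies. Theorem \ref{thm:StrongDuality} pairs the \emph{separation} number with covering by the reflection: $M_{\omega}(K,T)=N^{*}(K,-T)$. The almost-everywhere identity this yields for covering is $N_{\omega}(K,tT)=M_{\omega}(K,-tT)=N^{*}(K,tT)$ — same sign on both sides (the chain printed in Remark \ref{rem:Orit-paper}, which ends with $N^{*}(K,-tT)=N_{\omega}(K,tT)$, is a typo: the remark's own derivation gives $M_{\omega}(K,tT)=N_{\omega}(K,-tT)$). With the correct sign, Theorem \ref{thm:VolumeBounds} gives
\[
N_{\omega}(K,\lambda K)\le\frac{\vol\left(K-\lambda K\right)}{\vol\left(\lambda K\right)},
\]
and $K-\lambda K=K+\lambda(-K)$ is a difference-type body, \emph{not} $(1+\lambda)K$ unless $K=-K$; for general $K$ Rogers--Shephard gives exactly the ${2n \choose n}$ of Theorem \ref{thm:WeightedHadwiger}, no better. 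Your conflation of covering by $\lambda K$ with covering by $-\lambda K$ (two genuinely different problems: e.g.\ a simplex is covered by one translate of itself but not of its reflection) is what made the Rogers--Shephard loss ``disappear.'' Had your paragraph been correct it would settle the full conjecture, equivalently \cite[Conjecture 7]{Naszodi09}, which the paper explicitly leaves open; your computation is valid only in the symmetric case, where $-\lambda K=\lambda K$.

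On the equality case: your ``if'' direction (no translate of $\lambda Q$ contains two vertices of the cube, so the covering constraints at the $2^{n}$ vertices are disjoint and force total weight $\ge 2^{n}$) is correct, and is a nice explicit argument for a point the paper leaves implicit. But the ``only if'' direction is a program, not a proof, and you concede both gaps yourself: the passage to the limit $\lambda\to1^{-}$ of the packing constraint (mass escaping at $\partial(2K)$, semicontinuity of $\mu*\one_{K}\le1$ under weak-$*$ limits) is unjustified, and the rigidity statement — $\mu\ge0$ with $\mu*\one_{K}=\one_{2K}$ forces a parallelotope — is precisely the heart of the matter and is left entirely open; the Fourier sketch constrains the zero set of $\widehat{\one_{K}}$ but rules out nothing concretely. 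The paper's route is different and complete in the symmetric case: it works on the covering side with the explicit uniform measure $d\mu=2^{n}\one_{K}\,dx/\vol(K)$, shows via a compactness argument that equality forces at least $2^{n}$ extremal points $x_{i}$ with the homothetic intersection $K\cap(x_{i}+K)=\frac{K+x_{i}}{2}$, then uses Schneider's cone lemma (Lemma \ref{lem:CcapC}) to show $\{x_{1},\dots,x_{2^{n}}\}$ is an antipodal set, and concludes by Danzer--Gr\"unbaum (Theorem \ref{thm:DG}). You should rewrite the upper bound with the corrected duality (accepting ${2n \choose n}$ in the non-symmetric case) and either adopt the antipodal-set mechanism for ``only if'' or supply an actual proof of your tiling rigidity, which as it stands is an unproven conjecture of comparable difficulty.
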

For centrally symmetric convex bodies, we verify Conjecture \ref{conj:WeightedHadwiger},
including the equality case. We show
\begin{thm}
\label{thm:WeightedHadwiger}Let $K\sub\R^{n}$ be a convex body.
Then
\[
\lim_{\lambda\to1^{-}}N_{\omega}\left(K,\lambda K\right)\le\begin{cases}
2^{n} & K=-K\\
{2n \choose n} & K\neq-K
\end{cases}
\]
Moreover, for centrally symmetric $K$, ${\displaystyle \lim_{\lambda\to1^{-}}N_{\omega}\left(K,\lambda K\right)=2^{n}}$
if and only if $K$ is a parallelotope.
\end{thm}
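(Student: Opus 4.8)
The plan is to sandwich $\lim_{\lambda\to1^{-}}N_{\omega}(K,\lambda K)$ between the volume upper bound and an explicit dual lower bound, and then to analyze the equality case by a weak-$*$ compactness argument that rigidifies the extremal configuration.

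\textbf{The upper bound.} Translating so that $0\in\inte K$, the map $\mu\mapsto N_{\omega}(K,\mu K)$ is non-increasing in $\mu>0$ (a weighted covering by translates of $\mu_{1}K$ is also one by translates of $\mu_{2}K$ when $\mu_{1}\le\mu_{2}$), so the limit equals $\inf_{\mu<1}N_{\omega}(K,\mu K)$. Applying the limiting relation recorded in Remark \ref{rem:Orit-paper} with the body $-sK$ in place of $T$ gives, for every $s>0$, $\lim_{\delta\to0^{+}}N_{\omega}(K,(1+\delta)sK)\le M_{\omega}(K,-sK)=N^{*}(K,sK)$, the last equality being strong duality (Theorem \ref{thm:StrongDuality}). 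Hence for each $s<1$ we obtain $\lim_{\lambda\to1^{-}}N_{\omega}(K,\lambda K)\le N^{*}(K,sK)$, and Theorem \ref{thm:VolumeBounds} yields $N^{*}(K,sK)\le\vol(K-sK)/\vol(sK)$. Letting $s\to1^{-}$, the right-hand side tends to $\vol(K-K)/\vol(K)$. For $K=-K$ this is $\vol(2K)/\vol(K)=2^{n}$, while for general $K$ the Rogers--Shephard inequality gives $\vol(K-K)\le\binom{2n}{n}\vol(K)$; this establishes both stated upper bounds.

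\textbf{Parallelotopes attain $2^{n}$.} Both $N_{\omega}$ and the quantity $\vol(K-K)/\vol(K)$ are invariant under invertible affine maps, so it suffices to treat the cube $K=[-1,1]^{n}$. Combining weak duality $N^{*}\le N_{\omega}$ with strong duality in the symmetric case (where $N^{*}(K,\lambda K)=N^{*}(K,-\lambda K)=M^{*}(K,\lambda K)$) reduces the lower bound to exhibiting a $\lambda K$-separated measure of large mass. Take $\mu=\sum_{v\in\{-1,1\}^{n}}\delta_{v}$: every translate $x+\lambda K$ is a box of edge $2\lambda<2$ and hence contains at most one vertex $v$, so $\mu*\one_{\lambda K}\le1$ while $\mu(K)=2^{n}$. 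Thus $N_{\omega}(K,\lambda K)\ge M^{*}(K,\lambda K)\ge2^{n}$ for every $\lambda<1$, and together with the first paragraph this gives $\lim_{\lambda\to1^{-}}N_{\omega}(K,\lambda K)=2^{n}$ for every parallelotope.

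\textbf{Equality forces a parallelotope (the main obstacle).} Assume $K=-K$ and $\lim_{\lambda\to1^{-}}N_{\omega}(K,\lambda K)=2^{n}$. The first paragraph together with $N^{*}\le N_{\omega}$ shows $\lim_{\lambda\to1^{-}}N_{\omega}=\lim_{\lambda\to1^{-}}N^{*}(K,\lambda K)=\lim_{\lambda\to1^{-}}M^{*}(K,\lambda K)=2^{n}$. I would then choose near-optimal $\lambda K$-separated measures $\mu_{\lambda}$ supported in $K$ with $\mu_{\lambda}(K)\to2^{n}$ (their existence guaranteed by Proposition \ref{Prop:OptimalCov} and duality) and pass to a weak-$*$ limit $\mu$ on the compact set $K$ with $\mu(K)=2^{n}$. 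The constraint that every translate of $\lambda K$ carries mass at most $1$, together with the fact that two points of $K$ lie in a common translate of $\lambda K$ exactly when their $\|\cdot\|_{K}$-distance is at most $2\lambda$, should force in the limit $\lambda\to1^{-}$ that any two points of ${\rm supp}(\mu)$ are at distance exactly $2$ (the $\|\cdot\|_{K}$-diameter of $K$) and that the total mass $2^{n}$ concentrates on atoms. This produces $2^{n}$ points of $K$ that are pairwise diametral, i.e.\ a strictly antipodal set of size $2^{n}$, whence by the Danzer--Gr\"unbaum theorem $K$ must be a parallelotope. The real difficulty is precisely this last step: proving that the weak-$*$ limit of the separated measures is atomic with pairwise-diametral support of full size $2^{n}$ -- that no mass escapes diffusely and that the thin-shell separation at scale $2\lambda\to2$ rigidifies the configuration -- and then invoking the equality case of Danzer--Gr\"unbaum. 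Making this limiting and rigidity argument precise is where the substance of the equality case lies.
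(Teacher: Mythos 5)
Your upper bounds are correct (if slightly roundabout: Proposition \ref{prop:volumeUpBound} already gives $N_{\omega}(K,\lambda K)\le\vol(K-\lambda K)/\vol(\lambda K)$ for convex $\lambda K$ without passing through the Remark), and your verification that parallelotopes attain $2^{n}$ via the vertex measure is a clean argument for a direction the paper leaves implicit. The problem is the equality case, which is the substance of the theorem and which you explicitly leave open. Worse, the rigidity statement you would need is not a consequence of the separation constraint alone: a limit measure $\mu$ with $\mu(K)=2^{n}$ and $\mu(x+\inte{K})\le1$ for all $x$ need not \emph{a priori} have pairwise-diametral support, because a configuration of many atoms each of mass at most $1/2$ can place two support points at $\|\cdot\|_{K}$-distance well below $2$ while every translate of $\inte{K}$ still carries mass at most $1$. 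So "thin-shell separation rigidifies the configuration into $2^{n}$ unit atoms" requires ruling out fragmented and diffuse optimizers first, and nothing in your sketch does this. As written, the "moreover" clause is unproven.

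The paper avoids this entirely by staying on the covering side with an explicit candidate. After reducing to $N_{\omega}(K,\inte{K})$ (Lemma \ref{lem:FracCovInt}) and allowing uniform covering measures (Lemma \ref{lem:GCC}), it takes $d\mu=2^{n}\one_{K}\,dx/\vol(K)$ and checks the covering condition via $K\cap(x+K)\supseteq\frac{K}{2}+\frac{1}{2}\left[K\cap(2x+K)\right]\supseteq\frac{K+x}{2}$, so that $\mu*\one_{\inte{K}}(x)=2^{n}\vol(K\cap(x+K))/\vol(K)\ge1$. If $N_{\omega}(K,\inte{K})=2^{n}$, a compactness argument shows one cannot shrink $\mu$ by any factor $c<1$, which forces equality $K\cap(x+K)=\frac{K+x}{2}$ at some extremal point $x$; repeating the argument with finitely many point masses added at such points forces at least $2^{n}$ extremal points $x_{1},\dots,x_{2^{n}}$ with $K\cap(x_{i}+K)$ a homothet of $K$. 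Schneider's Lemma \ref{lem:CcapC} then gives the cone representation $K=(x_{i}-C_{i})\cap(C_{i}-x_{i})$, from which these $2^{n}$ points are shown to form an antipodal set, and Theorem \ref{thm:DG} (Danzer--Gr\"unbaum, including its equality case) concludes that $K$ is a parallelotope. If you want to salvage your dual route, you would need an analogue of this rigidity on the separation side, which is precisely the step you have not supplied.
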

It is worth mentioning that the classical covering problem of Levi-Hadwiger
is equivalent to the problem of the illumination of a convex body
(for surveys see \cite{MartiniEt99,Bezdek06}) which asks how many
directions are required to illuminate the entire boundary of a convex
body $K$ (a direction $u\in\Sph^{n-1}$ is said to illuminate a point
$b$ in the boundary of $K$ if the ray emanating from $b$ in direction
$u$ intersects the interior of $K$). A fractional version of the
illumination problem was considered in \cite{Naszodi09}, where it
was proven that the fractional illumination number of a convex body
$K$, denoted by $i^{*}\left(K\right)$, satisfies that $i^{*}\left(K\right)\le{2n \choose n}$
and that $i^{*}\left(K\right)\le2^{n}$ for all centrally symmetric
bodies (with parallelotopes attaining equality). It was further conjectured
\cite[Conjecture 7]{Naszodi09} that $i^{*}\left(K\right)\le2^{n}$
for all convex bodies and that equality is attained only for parallelotopes.
 However, as no relation between fractional and usual illumination
numbers was proposed, this result remained isolated. Also, it seems
that the equality conditions were not analyzed. In fact, one may
verify that the proof of the equivalence between  the illumination
problem and the Levi-Hadwiger covering problem (see \cite[Theorem 7]{BolGoh85})
carries over to the fractional setting and conclude that  $i^{*}\left(K\right)=\lim_{\lambda\to1^{-}}N_{\omega}\left(K,\lambda K\right)$.
Thus, Theorem \ref{thm:WeightedHadwiger} actually verifies the aforementioned
results about fractional illumination and also verifies \cite[Conjecture 7]{Naszodi09}
for the case of centrally symmetric convex bodies, including the equality
hypothesis.\\

Combining the inequality in Theorem \ref{thm:DeltaEquiv} with the
volume inequality in Theorem \ref{thm:VolumeBounds}, we prove the
following bound for the classical Levi-Hadwiger problem, in the case
of centrally symmetric convex bodies, which is the same as the aforementioned
(best known) general bound of Rogers.
\begin{cor}
\label{cor:Hadwiger} Let $K\sub\R^{n}$ be a centrally symmetric
convex body. Then for all $n\ge3$, 
\begin{align*}
\lim_{\lambda\to1^{-}}N\left(K,\lambda K\right) & \le2^{n}\left(n\ln\left(n\right)+n\ln\ln\left(n\right)+5n\right)
\end{align*}

\end{cor}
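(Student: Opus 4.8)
The plan is to bound $N(K,\lambda_0 K)$ for a single, well-chosen $\lambda_0<1$ and then pass to the limit by monotonicity. Since $\lambda K$ grows with $\lambda$, any cover of $K$ by translates of $\lambda K$ is also a cover by translates of $\lambda' K$ for $\lambda'>\lambda$, so $\lambda\mapsto N(K,\lambda K)$ is non-increasing and $\lim_{\lambda\to1^-}N(K,\lambda K)=\inf_{\lambda<1}N(K,\lambda K)\le N(K,\lambda_0 K)$ for every fixed $\lambda_0<1$. Hence it suffices to exhibit one $\lambda_0$ close to $1$ for which the desired estimate holds.

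First I would fix parameters $t_1,\beta>0$ with $t_1+\beta=\lambda_0<1$, with $t_1$ close to $1$ and $\beta$ small, and use convexity to write $\lambda_0 K=t_1K+\beta K$. Applying Theorem~\ref{thm:DeltaEquiv} with $T_1=t_1K$ and $T_2=\beta K$ gives
\[
N(K,\lambda_0K)\le \ln\!\big(4\overline N(K,\beta K)\big)\big(N_\omega(K,t_1K)+1\big)+\sqrt{\ln\!\big(4\overline N(K,\beta K)\big)\big(N_\omega(K,t_1K)+1\big)}.
\]

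The two inner quantities are controlled by volume. For $\overline N$, a standard packing argument suffices: a maximal family of disjoint translates $x_i+\tfrac\beta2K$ with $x_i\in K$ lies inside $(1+\tfrac\beta2)K$, and by maximality the translates $x_i+\beta K$ cover $K$ (here $\tfrac\beta2K-\tfrac\beta2K=\beta K$ since $K=-K$), so a volume count yields $\overline N(K,\beta K)\le(1+2/\beta)^n$. For $N_\omega$ I would use Theorem~\ref{thm:VolumeBounds}: since $K=-K$ we have $K-t_1K=(1+t_1)K$, whence $N^*(K,t_1K)\le\big((1+t_1)/t_1\big)^n$. By the coincidence $N_\omega(K,tK)=N^*(K,tK)$ for almost every $t$ (Remark~\ref{rem:Orit-paper}, a consequence of the strong duality of Theorem~\ref{thm:StrongDuality}), I may pick $t_1$ in this full-measure set and conclude $N_\omega(K,t_1K)\le\big((1+t_1)/t_1\big)^n$, a quantity that tends to $2^n$ as $t_1\to1^-$. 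This explicit rate, rather than the bare limit of Theorem~\ref{thm:WeightedHadwiger}, is what makes the balancing below possible.

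It then remains to balance the two factors, whose product behaves like $\big[\ln4+n\ln(1+2/\beta)\big]\big(((1+t_1)/t_1)^n+1\big)$. Taking $1-t_1=o(1/n)$ forces $((1+t_1)/t_1)^n=2^n(1+o(1))$, while choosing $\beta\approx 2/(n\ln n\cdot e^{5})$ makes $n\ln(1+2/\beta)\approx n\ln n+n\ln\ln n+5n$; the constraint $t_1+\beta<1$ is met by taking, e.g., $1-t_1=2\beta$. The main obstacle is precisely this balancing: one cannot simultaneously let $t_1\to1$ (to extract $2^n$) and keep $\beta$ bounded below (to control $\overline N$), so the factor $2^n$ and the logarithmic Rogers-type factor must be produced together in a single limit. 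Pinning down the constant $5$ so the bound holds for every $n\ge3$ — absorbing $\ln 4$, the additive $+1$, and the square-root term into the $5n$ — requires careful but routine bookkeeping of the lower-order contributions, and the outcome mirrors Rogers' density estimate, as expected.
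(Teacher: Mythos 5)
Your proposal is correct and follows essentially the same route as the paper: decompose $\lambda K$ into two homothets, apply Theorem~\ref{thm:DeltaEquiv}, control $N_{\omega}$ and $\overline{N}$ by the volume bounds, and choose the small homothety parameter of order $1/(n\ln n)$ before absorbing the lower-order terms into $5n$. The only cosmetic differences are that you reduce to a single $\lambda_{0}$ via monotonicity rather than taking $\lambda\to1^{-}$ of the bound, and that you route the $N_{\omega}$ estimate through the a.e.\ coincidence with $N^{*}$ instead of invoking the direct bound $N_{\omega}(K,T)\le\vol(K-T)/\vol(T)$ for convex $T$ from Proposition~\ref{prop:volumeUpBound}.
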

\noindent We remark that the above bound and Rogers' bound are asymptotically
equivalent, and that in both cases the constant $5n$ above may be
improved by performing more careful computations, improving and optimizing
over various constants. We avoid such computations as they will not
affect the order of magnitude of this bound, and complicate the exposition.

\subsection*{Acknowledgments}

\textcolor{black}{We thank Prof.~Noga~Alon, Prof.~Mark~Meckes and
Prof.~Boris~Tsirelson for their valuable comments and suggestions.
We also thank Prof.~Rolf~Schneider for his proof of Lemma \ref{lem:CcapC}
and for translating for us the entire paper \cite{DanzerGrunbaum}
from German.}\\
This research was supported in part by ISF grant number  247/11.

The remainder of this note is organized as follows. In Section \ref{sub:Weak-duality}
we show weak duality between weighted covering and separation numbers.
In Section \ref{sub:Strong-duality} we prove Theorem \ref{thm:StrongDuality}.
In Section \ref{sub:Optimal-measures} we discuss the existence of
optimal covering measures. In Section \ref{sec:GCC} we discuss the
approximation of uniform covering measures by discrete covering measures.
In  \ref{sub:Volume-bounds} we prove Theorem \ref{thm:VolumeBounds}.
In Section \ref{sub:delta-equivalence} we prove Theorem \ref{thm:DeltaEquiv}.
In Section  \ref{sec:The-metric-space-setting} we discuss the weighted
notions of covering and separation in the setting of general metric
spaces. In Section \ref{sec:Hadwiger} we discuss both the classical
and weighted versions of the Levi-Hadwiger covering problems, proving
Theorem \ref{thm:WeightedHadwiger} and Corollary \ref{cor:Hadwiger}.

\section{Weighted covering and separation}

\subsection{\label{sub:Weak-duality}Weak duality }
\begin{prop}
\label{prop:WeakDuality}Let $K\sub\R^{n}$ be compact and let $T\sub\R^{n}$
be compact with non-empty interior. Then 
\[
M^{*} (K,T )\le N^{*} (K,-T )\,\;{\rm and}\,\;\overline{M}_{\omega} (K,T )\le\overline{N}_{\omega} (K,-T )
\]
In particular, we also have that $M_{\omega} (K,T )\le N_{\omega} (K-T ).$\end{prop}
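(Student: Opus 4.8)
The plan is to recognize $\bigl(M^{*}(K,T),\,N^{*}(K,-T)\bigr)$ as a primal--dual pair of linear programs over the cone of nonnegative measures, and to prove weak duality by a single Tonelli computation; the same computation, with the integration domains localized, then handles the barred discrete versions and the ``in particular'' clause. The engine of everything is the symmetric identity, valid for all $\mu,\nu\in\mathcal{B}_{+}^{n}$ because every integrand is nonnegative:
\[
\int_{\R^{n}}\bigl(\nu*\one_{-T}\bigr)\,d\mu=\int_{\R^{n}}\!\int_{\R^{n}}\one_{T}(y-x)\,d\nu(y)\,d\mu(x)=\int_{\R^{n}}\bigl(\mu*\one_{T}\bigr)\,d\nu .
\]
The sign $-T$ on the covering side is exactly what makes this work: since $\one_{-T}(x-y)=\one_{T}(y-x)$, convolving against $\one_{-T}$ produces the kernel $\one_{T}(y-x)$ that is common to both convolutions, and this is what symmetrizes the double integral in the roles of $\mu$ and $\nu$.

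For the continuous inequality $M^{*}(K,T)\le N^{*}(K,-T)$ I would fix any feasible $\mu$ (so $\mu*\one_{T}\le1$ on $\R^{n}$) and any feasible $\nu$ (so $\nu*\one_{-T}\ge\one_{K}$) and chain
\[
\int_{K}d\mu=\int_{K}\one_{K}\,d\mu\le\int_{K}\bigl(\nu*\one_{-T}\bigr)\,d\mu\le\int_{\R^{n}}\bigl(\nu*\one_{-T}\bigr)\,d\mu=\int_{\R^{n}}\bigl(\mu*\one_{T}\bigr)\,d\nu\le\int_{\R^{n}}d\nu=\nu(\R^{n}),
\]
using the covering bound on $K$, positivity of $\mu$, the identity above, and finally $\mu*\one_{T}\le1$ together with $\nu\ge0$. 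Taking the supremum over feasible $\mu$ and the infimum over feasible $\nu$ yields the claim.

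The barred discrete case $\overline{M}_{\omega}(K,T)\le\overline{N}_{\omega}(K,-T)$ follows the same scheme but requires care, and this is where I expect the only real subtlety. Here the separation constraint only guarantees $\mu*\one_{T}\le1$ \emph{on $K$} rather than everywhere, while the covering measure $\nu$ is required to satisfy $\mathrm{supp}(\nu)\subseteq K$. The support condition is precisely what rescues the argument: when I expand $\int_{K}\bigl(\nu*\one_{-T}\bigr)\,d\mu$ by Tonelli, the outer integral in $y$ is confined to $K=\mathrm{supp}(\nu)$, so after swapping I only ever evaluate $\mu*\one_{T}$ at points $y\in K$, where the localized separation bound is available; and $\int_{K}\one_{T}(y-x)\,d\mu(x)\le(\mu*\one_{T})(y)$, by positivity of $\mu$, lets me pass from the $K$-restricted inner integral up to the full convolution. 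The same chain then gives $\mu(K)\le\nu(K)=\nu(\R^{n})$, and sup/inf finishes it. The bookkeeping of which constraint lives on $K$ versus on all of $\R^{n}$, matched against the support of the opposite measure, is the one place an error could creep in.

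Finally, for the ``in particular'' statement $M_{\omega}(K,T)\le N_{\omega}(K,-T)$, I would sandwich: since discrete measures form a subclass of $\mathcal{B}_{+}^{n}$ we have $M_{\omega}(K,T)\le M^{*}(K,T)$ and $N^{*}(K,-T)\le N_{\omega}(K,-T)$, so the already-proven continuous weak duality $M^{*}(K,T)\le N^{*}(K,-T)$ delivers the result immediately. Alternatively it follows directly from the same Tonelli chain, which is in fact easier in this case because the $M_{\omega}$ separation constraint $\mu*\one_{T}\le1$ holds on all of $\R^{n}$, so no support restriction on $\nu$ is needed at all.
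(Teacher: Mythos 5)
Your proof is correct and is essentially the paper's own argument: a Fubini--Tonelli swap of the double integral with kernel $\one_{T}(y-x)=\one_{-T}(x-y)$, chained between the covering lower bound on $K$ and the separation upper bound, with the barred case handled by matching the $K$-localized separation constraint against the support restriction on the covering measure, exactly as the paper does in one sentence. The ``in particular'' clause via $M_{\omega}\le M^{*}\le N^{*}\le N_{\omega}$ is also the intended deduction.
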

\begin{proof}
Let $\mu$ be a covering measure of $K$ by $-T$. Let $\rho$ be
a $T-$separated measure. By our assumptions we have that $\one_{T}*\rho\le1$
and $\one_{-T}*\mu\ge\one_{K}$. Thus 
\begin{align*}
\int_{K}d\rho\left(x\right)= & \int\one_{K}\left(x\right)\cdot d\rho\left(x\right)\le\int\left(\one_{-T}*\mu\right)\left(x\right)d\rho\left(x\right)\\
= & \int d\rho\left(x\right)\int\one_{-T}\left(x-y\right)d\mu\left(y\right)\\
= & \int d\mu\left(y\right)\int\one_{T}\left(y-x\right)d\rho\left(x\right)\\
= & \int\left(\one_{T}*\rho\right)\left(y\right)d\mu\left(y\right)\\
\le & \int d\mu\left(y\right)
\end{align*}
and so $M^{*}\left(K,T\right)\le N^{*}\left(K,-T\right)$. Similarly,
by considering $\one_{T}*\rho\le1$ only on $K$ and $\mu$ which
must be supported only on $K$, the exact same inequality yields $\overline{M}_{\omega}\left(K,T\right)\le\overline{N}_{\omega}\left(K,-T\right)$.
\end{proof}

\subsection{\label{sub:Strong-duality}Strong duality}

In this section we prove Theorem \ref{thm:StrongDuality}. By Proposition
\ref{prop:WeakDuality} it is enough to show an inequality $M_{\omega}\left(K,T\right)\ge N_{\omega}\left(K,-T\right)$.

We start with the discretized versions of our weighted covering and
separation notions. Let $\Lambda=\left\{ x_{i}\right\} _{i=1}^{d}\sub\R^{n}$
be some finite set, which will be chosen later, and define:
\[
N_{\omega}\left(K,T,\Lambda\right)=\inf\left\{ \sum_{i=1}^{N}\omega_{i}\,:\,\,\exists\left(x_{i},\,\omega_{i}\right)_{i=1}^{N}\sub\left(\Lambda,\,\R^{+}\right),\,\,\sum_{i=1}^{N}\omega_{i}\one_{T}\left(x-x_{i}\right)\ge1_{K}(x)\,\:\forall x\in\Lambda\right\} 
\]
and
\[
M_{\omega}\left(K,T,\Lambda\right)=\sup\left\{ \sum_{i=1}^{N}\omega_{i}\,:\,\,\exists\left(x_{i},\,\omega_{i}\right)_{i=1}^{N}\sub\left(\Lambda\cap K,\,\R^{+}\right),\,\,\sum_{i=1}^{N}\omega_{i}\one_{T}\left(x-x_{i}\right)\le1\,\:\forall x\in\Lambda\right\} .
\]
In this setting, linear programming duality gives us an equality of
the form 
\begin{equation}
N_{\omega}\left(K,T,\Lambda\right)=M_{\omega}\left(K,-T,\Lambda\right).\label{eq:DiscretizeDuality}
\end{equation}
Indeed, define the vectors $b,c\in\R^{d}$ by
\[
c_{i}=\begin{cases}
1, & x_{i}\in K\\
0, & {\rm otherwise}
\end{cases}\,\,,\,\,\,\,\,\,\,\, b_{i}=1
\]
and the $d\times d$ matrix $M$ by 
\[
M_{ij}=\begin{cases}
1, & x_{i}\in x_{j}+T\\
0, & {\rm otherwise.}
\end{cases}
\]
Note that 
\[
M_{ij}^{T}=\begin{cases}
1, & x_{i}\in x_{j}-T\\
0, & {\rm otherwise.}
\end{cases}
\]
Let $\iprod{\cdot}{\cdot}$ denote the standard Euclidean inner product
in $\R^{d}$. Then, in the language of vectors and matrices, the above
discretized weighted covering and separation notions read 
\begin{eqnarray*}
N_{\omega}\left(K,T,\Lambda\right) & = & \min\left\{ \iprod bx:\, Mx\ge c,\, x\ge0\right\} ,\\
M_{\omega}\left(K,-T,\Lambda\right) & = & \max\left\{ \iprod cy:\, M^{T}y\le b,\, y\ge0\right\} 
\end{eqnarray*}
which are equal by the well-known duality theorem of linear programming,
see e.g., \cite{Barvinok2002}.\\

Next, we shall use this observation with a specific family of sets
$\Lambda(\delta).$ A set $\Lambda\left(\delta\right)\sub\R^{n}$
is said to be a $\delta-$net of a set $A\sub\R^{n}$ if for every
$x\in A$ there exists $y\in\Lambda\left(\delta\right)$ for which
$\left|x-y\right|\le\delta$. In other words, $A\sub\Lambda+\delta B_{2}^{n}$.
We shall make use of the two following simple lemmas, corresponding
to \cite[Lemmas 14-15]{ArtsteinRaz2011}. 
\begin{lem}
\label{lem:14Art}Let $K\sub\R^{n}$ be compact, $T\sub\R^{n}$ compact
with non-empty interior and let $\Lambda\left(\delta\right)\sub K$
be some $\delta-$net for $K$. Then 
\[
N_{\omega}\left(K,T+\delta B_{2}^{n}\right)\le N_{\omega}\left(K,T,\Lambda\left(\delta\right)\right).
\]
\end{lem}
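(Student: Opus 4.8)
The plan is to take a near-optimal discrete weighted covering witnessing $N_{\omega}(K,T,\Lambda(\delta))$ and to show that the \emph{same} points and weights, now read against the inflated set $T+\delta B_2^n$, constitute a genuine (continuous) weighted covering of all of $K$, not merely of the net points. Since the total weight is left unchanged, passing to the infimum then yields the claimed inequality. Concretely, I would fix $\eps>0$ and choose a feasible configuration $(x_i,\omega_i)_{i=1}^N$ with $x_i\in\Lambda(\delta)$ and $\sum_{i=1}^N\omega_i\le N_{\omega}(K,T,\Lambda(\delta))+\eps$ satisfying $\sum_{i=1}^N\omega_i\one_T(z-x_i)\ge 1$ for every $z\in\Lambda(\delta)$. (Here I use that $\Lambda(\delta)\sub K$, so the right-hand side $\one_K(z)$ equals $1$ on the net, matching the definition of the discretized number.)

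Next I would verify the continuous covering condition at an arbitrary point $x\in K$. Using that $\Lambda(\delta)$ is a $\delta$-net for $K$, pick $y\in\Lambda(\delta)$ with $|x-y|\le\delta$. The key (and essentially the only) step is a pointwise domination of indicators: for each index $i$, if $y-x_i\in T$ then
\[
x-x_i=(y-x_i)+(x-y)\in T+\delta B_2^n,
\]
since $x-y\in\delta B_2^n$. Hence $\one_{T+\delta B_2^n}(x-x_i)\ge\one_T(y-x_i)$ for every $i$, and because the weights are nonnegative, summing preserves the inequality:
\[
\sum_{i=1}^N\omega_i\one_{T+\delta B_2^n}(x-x_i)\ge\sum_{i=1}^N\omega_i\one_T(y-x_i)\ge 1.
\]
Therefore $(x_i,\omega_i)_{i=1}^N$ is a weighted covering of $K$ by $T+\delta B_2^n$, which gives $N_{\omega}(K,T+\delta B_2^n)\le\sum_{i=1}^N\omega_i\le N_{\omega}(K,T,\Lambda(\delta))+\eps$; letting $\eps\to 0$ completes the proof.

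I do not anticipate a genuine obstacle here: the statement is morally the assertion that inflating the covering body by $\delta B_2^n$ exactly compensates for testing the covering inequality only on a $\delta$-net rather than on all of $K$. The single point deserving care is that the covering inequality must be established at \emph{every} $x\in K$ from data available only at the net points, and this is precisely what the triangle-inequality domination of indicators displayed above delivers. If one prefers to avoid the $\eps$-argument, an alternative is to observe that any feasible discrete covering for $N_{\omega}(K,T,\Lambda(\delta))$ is directly feasible for $N_{\omega}(K,T+\delta B_2^n)$ by the same indicator comparison, so the feasible set of the former injects into that of the latter with equal objective value, forcing the inequality of infima.
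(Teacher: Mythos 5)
Your proof is correct and is essentially the paper's argument: the paper compresses the same idea into the chain $N_{\omega}(K,T+\delta B_{2}^{n})\le N_{\omega}(\Lambda(\delta)+\delta B_{2}^{n},T+\delta B_{2}^{n})\le N_{\omega}(\Lambda(\delta),T)\le N_{\omega}(K,T,\Lambda(\delta))$, whose middle step is exactly your indicator domination $\one_{T+\delta B_{2}^{n}}(x-x_{i})\ge\one_{T}(y-x_{i})$ for $|x-y|\le\delta$. You have simply unrolled that chain into a direct pointwise verification, which is fine.
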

\begin{proof}
Indeed, we have that 
\[
N_{\omega}\left(K,T+\delta B_{2}^{n}\right)\le N_{\omega}\left(K\cap\Lambda\left(\delta\right)+\delta B_{2}^{n},T+\delta B_{2}^{n}\right)\le N_{\omega}\left(K\cap\Lambda\left(\delta\right),T\right)\le N_{\omega}\left(K,T,\Lambda\left(\delta\right)\right).
\]
\end{proof}
\begin{lem}
\label{lem:15Art}Let $K\sub\R^{n}$ be compact, $T\sub\R^{n}$ be
compact with non-empty interior and let $\Lambda\left(\delta\right)\sub\R^{n}$
be some $\delta-$net for $K+T$. Then
\[
M_{\omega}\left(K,T\right)\ge M_{\omega}\left(K,T+\delta B_{2}^{n},\Lambda\left(\delta\right)\right)
\]
\end{lem}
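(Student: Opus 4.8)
The plan is to show that every configuration admissible for the discretized separation problem on the right-hand side, reinterpreted as a discrete measure, is admissible for the continuous problem $M_\omega(K,T)$ with the same total weight; the asserted inequality then follows by taking suprema. The mechanism is the classical ``enlarge the body by $\delta B_2^n$ to absorb net displacements'' trick, exactly as in the analogous \cite[Lemma 15]{ArtsteinRaz2011}.

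First I would fix an arbitrary competitor for $M_\omega(K,T+\delta B_2^n,\Lambda(\delta))$, namely weights $(x_i,\omega_i)_{i=1}^N$ with $x_i\in\Lambda(\delta)\cap K$, $\omega_i\in\R^+$, satisfying the discretized constraint $\sum_i\omega_i\one_{T+\delta B_2^n}(x-x_i)\le 1$ for every $x\in\Lambda(\delta)$. Set $\nu=\sum_i\omega_i\delta_{x_i}\in{\cal D}_+^n$. Since every atom $x_i$ lies in $K$, the continuous objective satisfies $\int_K d\nu=\sum_i\omega_i$, so no mass is lost in the passage, and it remains only to check that $\nu$ is $T$-separated in the continuous sense.

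The heart of the argument is verifying $(\nu*\one_T)(y)=\sum_i\omega_i\one_T(y-x_i)\le 1$ for \emph{every} $y\in\R^n$, not merely for $y$ in the net. I would fix $y$ for which the sum is positive (where it vanishes there is nothing to prove); then some term forces $y\in x_i+T\sub K+T$, so $y\in K+T$, and since $\Lambda(\delta)$ is a $\delta$-net for $K+T$ there is $z\in\Lambda(\delta)$ with $|y-z|\le\delta$. The key geometric transfer is that whenever $y-x_i\in T$ we have $z-x_i=(y-x_i)+(z-y)\in T+\delta B_2^n$, giving the pointwise domination $\one_T(y-x_i)\le\one_{T+\delta B_2^n}(z-x_i)$ for all $i$. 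Summing against the weights and applying the discretized constraint at the net point $z$ yields $\sum_i\omega_i\one_T(y-x_i)\le\sum_i\omega_i\one_{T+\delta B_2^n}(z-x_i)\le 1$, as required.

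Thus $\nu$ is an admissible competitor for $M_\omega(K,T)$ with $\int_K d\nu=\sum_i\omega_i$, whence $M_\omega(K,T)\ge\sum_i\omega_i$ for every discretized competitor; taking the supremum over all of them gives the claim. The only delicate point---and hence the main obstacle---is precisely the promotion of the net-only constraint to an everywhere constraint: one must confirm that each $y$ contributing a nonzero term genuinely lies in $K+T$ (so that the $\delta$-net reaches it) and that enlarging $T$ to $T+\delta B_2^n$ exactly swallows the displacement $z-y$. Both follow directly from the definitions, so beyond this bookkeeping no real difficulty arises.
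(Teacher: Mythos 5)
Your proof is correct and follows essentially the same route as the paper: both reduce the lemma to showing that a configuration satisfying the net-only constraint for $T+\delta B_{2}^{n}$ is genuinely $T$-separated, by locating any violating point inside $K+T$ and transferring it to a nearby net point, where the enlargement by $\delta B_{2}^{n}$ absorbs the displacement. The only cosmetic difference is that you argue directly via pointwise domination while the paper phrases it as a contradiction.
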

\begin{proof}
Suppose that $\left\{ \left(x_{i},\omega_{i}\right)\right\} _{i=1}^{M}\sub\left(K\cap\Lambda\left(\delta\right),\R^{+}\right)$
satisfies the condition in the definition of $M_{\omega}\left(K,T+\delta B_{2}^{n},\Lambda\right)$,
namely for all $x\in\Lambda(\delta)$ we have that $\sum_{i=1}^{N}\omega_{i}\one_{T+\delta B_{2}^{n}}\left(x-x_{i}\right)\le1$.
Then it is also weighted $T-$separated in the usual sense (that is,
satisfying for all $x\in\R^{n}$ that $\sum_{i=1}^{N}\omega_{i}\one_{T}\left(x-x_{i}\right)\le1$).
Indeed, otherwise we would have a point in $x\in\R^{n}$ such that
$\sum_{i=1}^{M}\omega_{i}\one_{T}\left(x-x_{i}\right)>1$. Since $x_{i}\in K$,
it follows that $x\in K+T$ and so there exists a point $y\in\Lambda\left(\delta\right)$
for which $y-x\in\delta B_{2}^{n}$ which means that $\sum_{i=1}^{M}\omega_{i}\one_{T+\delta B_{2}^{n}}\left(y-x_{i}\right)>1$,
a contradiction to our assumption.
\end{proof}
Finally, to prove Theorem \ref{thm:StrongDuality} we shall need the
following continuity result for weighted covering numbers:
\begin{prop}
\label{prop:ContCoverDisc}Let $K\sub\R^{n}$ be compact and let $T\sub\R^{n}$
be compact with non-empty interior. Then
\[
\lim_{\delta\to 0^{+}}N^{*}(K,T+\delta B_{2}^{n})=N^{*}(K,T).
\]
\end{prop}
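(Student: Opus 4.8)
The plan is to establish the two inequalities separately; existence of the limit will come for free from monotonicity. First I would observe that if $\delta' \le \delta$ then $T + \delta' B_{2}^{n} \subseteq T + \delta B_{2}^{n}$, so any covering measure of $K$ by the smaller set is automatically a covering measure by the larger set; hence $\delta \mapsto N^{*}(K, T + \delta B_{2}^{n})$ is non-increasing, and since $T \subseteq T + \delta B_{2}^{n}$ it is bounded above by $N^{*}(K,T)$ (which is finite because $T$ has non-empty interior). Consequently the limit $L := \lim_{\delta \to 0^{+}} N^{*}(K, T+\delta B_{2}^{n})$ exists and satisfies $L \le N^{*}(K,T)$. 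It remains to prove $N^{*}(K,T) \le L$.

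For the reverse inequality I would fix a sequence $\delta_{k} \to 0^{+}$ together with covering measures $\mu_{k}$ of $K$ by $T + \delta_{k} B_{2}^{n}$ satisfying $\mu_{k}(\R^{n}) \le N^{*}(K, T+\delta_{k}B_{2}^{n}) + 1/k$, so that $\limsup_{k} \mu_{k}(\R^{n}) \le L$. The first step is to confine the $\mu_{k}$ to a single compact set: for $x \in K$ and $\delta_{k}\le 1$, only the mass of $\mu_{k}$ in $x - (T + \delta_{k}B_{2}^{n}) \subseteq (K - T) + B_{2}^{n} =: C$ contributes to $(\mu_{k} * \one_{T + \delta_{k}B_{2}^{n}})(x)$, so replacing $\mu_{k}$ by its restriction to the fixed compact set $C$ preserves the covering inequality on $K$ and does not increase the total mass. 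Viewing the $\mu_{k}$ as elements of the dual of $C(C)$, I would then invoke weak-$*$ sequential compactness of norm-bounded sets --- valid since $C(C)$ is separable, hence the weak-$*$ topology on bounded sets is metrizable --- to extract a subsequence $\mu_{k_{j}} \rightharpoonup \mu$. Testing against the constant function $1 \in C(C)$ shows $\mu(\R^{n}) = \lim_{j} \mu_{k_{j}}(C) \le L$, so the candidate limit measure has the right total mass.

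The crux, and the main obstacle, is to pass the covering inequality $\mu_{k} * \one_{T+\delta_{k}B_{2}^{n}} \ge \one_{K}$ to the limit, because $\one_{T}$ is not continuous and convolution with it is not weak-$*$ continuous. I would circumvent this using the portmanteau theorem and a slightly enlarged closed neighbourhood. Fix $x \in K$ and $\eta > 0$; for all large $k$ we have $\delta_{k} \le \eta$ and hence $x - (T + \delta_{k}B_{2}^{n}) \subseteq F := x - (T + \eta B_{2}^{n})$, which is closed. Since $\mu_{k_{j}}\bigl(x - (T + \delta_{k_{j}}B_{2}^{n})\bigr) = (\mu_{k_{j}} * \one_{T + \delta_{k_{j}}B_{2}^{n}})(x) \ge 1$, monotonicity of measure gives $\mu_{k_{j}}(F) \ge 1$, and the closed-set form of the portmanteau theorem yields $\mu(F) \ge \limsup_{j}\mu_{k_{j}}(F) \ge 1$. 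Thus $\mu\bigl(x - (T + \eta B_{2}^{n})\bigr) \ge 1$ for every $\eta > 0$.

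Finally I would let $\eta \to 0^{+}$. The closed sets $x - (T + \eta B_{2}^{n})$ decrease to $x - T$ (here $T$ is compact, hence closed), so continuity from above of the finite measure $\mu$ gives $(\mu * \one_{T})(x) = \mu(x - T) = \lim_{\eta \to 0^{+}} \mu\bigl(x - (T + \eta B_{2}^{n})\bigr) \ge 1$. As $x \in K$ was arbitrary, $\mu$ is a covering measure of $K$ by $T$, whence $N^{*}(K,T) \le \mu(\R^{n}) \le L$. Together with the first paragraph this gives $N^{*}(K,T) = L$, as desired. The only delicate points are the uniform confinement to $C$ and the correct orientation of the portmanteau inequality (closed sets, to obtain a lower bound on $\mu$); everything else is routine.
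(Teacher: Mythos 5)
Your proof is correct. It shares the paper's overall skeleton --- extract a weak-$*$ limit $\mu$ of near-optimal covering measures for the enlarged bodies, check that the total mass does not increase, and verify that $\mu$ still covers $K$ by $T$ --- but the crux step, passing the covering inequality through the weak-$*$ limit despite the discontinuity of $\one_{T}$, is handled by a different device. The paper sandwiches $\one_{T}$ between continuous functions $f_{k}$ with $\one_{T}\le f_{k}\le\one_{T+\delta_{k}B_{2}^{n}}$, takes covering measures relative to the $f_{k}$ so that convolution becomes a legitimate weak-$*$ test, and concludes by monotone convergence as $f_{l}\downarrow\one_{T}$; you keep the indicators and instead apply the closed-set half of the portmanteau theorem to $F=x-\left(T+\eta B_{2}^{n}\right)$ and then let $\eta\to0^{+}$ using continuity from above, which is legitimate because $T$ is closed and hence $\bigcap_{\eta>0}\left(T+\eta B_{2}^{n}\right)=T$. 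The two mechanisms are essentially interchangeable (the portmanteau inequality is itself proved by the same Urysohn sandwiching), but your write-up is tidier on two points the paper leaves implicit: you confine all the measures to the fixed compact set $\left(K-T\right)+B_{2}^{n}$ before invoking Banach--Alaoglu, so that weak-$*$ sequential compactness and convergence of total masses are genuinely available, and you work directly with $N^{*}\left(K,T+\delta_{k}B_{2}^{n}\right)$ rather than with the auxiliary quantities $N^{*}\left(K,f_{k}\right)$, whose comparison with the quantities actually appearing in the statement the paper does not spell out. I see no gaps.
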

\begin{proof}
Clearly we have that 
\[
{\displaystyle \lim_{\delta\to 0}N^{*}\left(K,T+\delta B_{2}^{n}\right)\le N^{*}\left(K,T\right)}.
\]
For the opposite direction, let $\delta_{k}\underset{k\to\infty}{\longrightarrow}0$
and let $f_{k}$ be a sequence of continuous functions satisfying
$\one_{T}\le f_{k}\le\one_{T+\delta_{k}D}$ so that $f_{k}\underset{k\to\infty}{\longrightarrow}\one_{T}$
point-wise monotonically. Let $\left(\mu_{k}\right)_{k\in\bbN}$ be
a sequence of covering Borel regular measures of $K$ by $f_{k}$
(the definition is straightforward: replace $\one_{T}$ in the original
definition by $f_{k}$) such that $\int_{\R^{n}}d\mu_{k}\left(x\right)=N^{*}\left(K,f_{k}\right)+\eps_{k}$
with $0<\eps_{k}\to0$. By the well-known Banach-Alaoglu theorem and
passing to a subsequence we may assume without loss of generality
that $\mu_{k}\overset{w^{*}}{\longrightarrow}\mu$ for some non-negative
regular Borel measure. We claim that $\mu$ is a covering measure
of $K$ by $T$. Indeed, let $x\in K$. For $k\ge l$ we have that
\[
1\le\left(\mu_{k}*f_{k}\right)\left(x\right)\le\left(\mu_{k}*f_{l}\right)\left(x\right).
\]
By the weak{*} convergence of $\mu_{k}$ to $\mu$, taking the limit
$k\to\infty$ implies that $1\le\left(\mu*f_{l}\right)\left(x\right)$
and hence, by the monotone convergence theorem, taking the limit $l\to\infty$
implies that $1\le\left(\mu*\one_{T}\right)\left(x\right)$. Thus,
$\mu$ is a covering measure of $K$ by $T$. This means that 
\[
\lim_{k\to\infty}N^{*}\left(K,f_{k}\right)=\lim_{k\to\infty}\int_{\R^{n}}d\mu_{k}=\int_{\R^{n}}d\mu\ge N^{*}\left(K,T\right)
\]
which in turn implies the equality ${\displaystyle \lim_{\delta\to0^{+}}N^{*}\left(K,T+\delta B_{2}^{n}\right)=N^{*}\left(K,T\right)}$,
as claimed. 
\end{proof}

\begin{proof}
[Proof of Theorem \ref{thm:StrongDuality}]We use lemmas \ref{lem:14Art}-\ref{lem:15Art}
together with \eqref{eq:DiscretizeDuality} as follows; let $\Lambda(\delta_{k})$
be a sequence of $\delta_{k}$-nets for $K+T$ with $\delta_{k}\to0^{+}$
such that $K\cap\Lambda\left(\delta_{k}\right)$ are $\delta_{k}-$nets
for $K$. For each $k$ we have 
\begin{equation}
\begin{split}M_{\omega}(K,T) & \geq M_{\omega}(K,T+\delta_{k}B_{2}^{n},\Lambda(\delta_{k}))\\
 & =N_{\omega}(K,-\left(T+\delta_{k}B_{2}^{n}\right),\Lambda(\delta_{k}))\\
 & \geq N_{\omega}(K,-\left(T+2\delta_{k}B_{2}^{n}\right)).
\end{split}
\end{equation}
Thus, by Proposition \ref{prop:ContCoverDisc} 
\[
M_{\omega}(K,T)\ge\lim_{k\to\infty}N^{*}(K,-T+2\delta_{k}B_{2}^{n})=N^{*}\left(K,-T\right).
\]
Taking the above inequality into account together with Proposition
\ref{prop:WeakDuality}, the proof is thus complete.
\end{proof}

\begin{rem}\label{rem:Orit-paper}
In \cite{ArtsteinRaz2011}, Proposition 22 is analogous to Proposition  \ref{prop:ContCoverDisc} above with $N_\omega$ instead of $N^*$. We mention that replacing $T+\delta B_2^n$ by $(1+\delta)T$ is of no significance because any two bodies in fixed dimension are equivalent. 
The proof presented in \cite{ArtsteinRaz2011} is not correct, as it is based on  \cite[Lemma 20]{ArtsteinRaz2011}  which contains an error. \\
Note, however, that since the function $N^{*}(K,tT)$ is monotone in $t>0$, it is clearly continuous almost everywhere. This, combined with the reasoning in \cite[Proof of Theorem 7]{ArtsteinRaz2011} (or, similarly, the reasoning above for $N^*$) implies that for almost every $t>0$ we have
\[
M_{\omega} (K,tT )=N_{\omega} (K,-tT ).
\]
By taking the limit as $t\to 1^+$ we get that 
 \[
 \lim_{\delta\to 0^+}M_{\omega} (K,(1+\delta)T )=\lim_{\delta\to 0^+}N_{\omega} (K,-(1+\delta)T ),
 \]
which, combined with Theorem \ref{thm:StrongDuality}), we get the following row of equalities (as $N^*$, and so also $M_\omega$, are continuous), holding for all convex bodies $K,T\subset \R^n$
\[
M_{\omega} (K,T )=M^{*} (K,T )=N^{*} (K,-T )= \lim_{\delta\to 0^+}N_{\omega} (K,-(1+\delta)T ).
\]
\end{rem}

\subsection{\label{sub:Optimal-measures}Optimal measures }
\begin{prop}
\label{Prop:OptimalCov}Let $K\sub\R^{n}$ be compact and let $T\sub\R^{n}$
be compact with non-empty interior. Then there exists a (non-empty)
convex set ${\cal {\cal C}\sub{\cal B}}_{+}^{n}$ of optimal regular
Borel covering measures of $K$ by $T$. That is, for every $\mu\in{\cal {\cal C}}$
we have that $\mu*\one_{T}\ge\one_{K}$ and 
\[
N^{*} (K,T )=\int_{\R^{n}}d\mu
\]
\end{prop}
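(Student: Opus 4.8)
The plan is to use the direct method: take a minimizing sequence of covering measures, extract a weak-* convergent subsequence via Banach--Alaoglu, and check that the limit is again a covering measure of the same (optimal) total mass. The only genuine difficulty is that the covering constraint is phrased through the merely upper semicontinuous function $\one_{T}$ rather than a continuous test function, so passing the inequality to the limit requires an approximation argument; a secondary issue, ruling out escape of mass to infinity, is handled by a preliminary localization of supports.

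First I would localize the support to the fixed compact set $W:=K-T$. Only the mass lying in $W$ is relevant for covering $K$: if $\mu*\one_{T}\ge\one_{K}$ and $\mu'$ denotes the restriction of $\mu$ to $W$, then for every $x\in K$ one has $x-T\sub K-T=W$, so $(\mu'*\one_{T})(x)=\mu'(x-T)=\mu(x-T)=(\mu*\one_{T})(x)\ge1$, while $\int d\mu'\le\int d\mu$. Hence in the infimum defining $N^{*}(K,T)$ one may restrict attention to measures supported in the fixed compact set $W$.

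Next, take a minimizing sequence $(\mu_{k})$ of covering measures supported in $W$ with $\int d\mu_{k}\to N^{*}(K,T)$; their total masses are bounded. Viewing them as elements of $C(W)^{*}$ and invoking Banach--Alaoglu, I pass to a subsequence converging weak-* to a limit $\mu$, which is again a non-negative Borel measure supported in $W$ (non-negativity passes to the limit by testing against non-negative continuous functions). Testing against the constant function $1\in C(W)$ gives $\int d\mu=\lim_{k}\int d\mu_{k}=N^{*}(K,T)$, so once $\mu$ is shown to be a covering measure it is automatically optimal. To verify $\mu*\one_{T}\ge\one_{K}$ I would argue exactly as in the proof of Proposition \ref{prop:ContCoverDisc}: choose continuous functions $f_{l}$ with $\one_{T}\le f_{l}$ and $f_{l}\downarrow\one_{T}$ pointwise. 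Fixing $x\in K$, the map $y\mapsto f_{l}(x-y)$ is continuous on $W$, so weak-* convergence yields $(\mu_{k}*f_{l})(x)\to(\mu*f_{l})(x)$; since $f_{l}\ge\one_{T}$ and each $\mu_{k}$ covers, $(\mu_{k}*f_{l})(x)\ge(\mu_{k}*\one_{T})(x)\ge1$, whence $(\mu*f_{l})(x)\ge1$ for every $l$. Letting $l\to\infty$ and applying monotone convergence gives $(\mu*\one_{T})(x)\ge1$, and as $x\in K$ is arbitrary, $\mu$ is an optimal covering measure, so the set $\mathcal{C}$ is non-empty. Its convexity is immediate, since both the affine constraint $\mu*\one_{T}\ge\one_{K}$ and the value of $\int d\mu$ are preserved under convex combinations.

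The main obstacle is precisely this last passage to the limit in the covering inequality: because $\one_{T}$ is not continuous one cannot apply weak-* convergence directly, and the monotone approximation from above is what resolves it (equivalently, one may invoke the portmanteau inequality $\mu(x-T)\ge\limsup_{k}\mu_{k}(x-T)$ for the closed set $x-T$, which is legitimate once all measures are supported in the single compact set $W$). The localization step in the first paragraph is exactly what prevents mass from escaping to infinity and thereby guarantees that the extracted limit retains the full optimal mass.
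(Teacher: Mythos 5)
Your proof is correct and follows essentially the same route as the paper's: a minimizing sequence, weak-* compactness via Banach--Alaoglu, and passage to the limit in the covering constraint by approximating $\one_{T}$ from above by continuous functions and applying monotone convergence. The only differences are cosmetic improvements on your side --- you work directly with Borel measures and localize supports to $K-T$ (which cleanly justifies the compactness and the equality $\int d\mu=N^{*}(K,T)$), whereas the paper first passes to discrete measures via Theorem \ref{thm:StrongDuality} and leaves these points implicit.
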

\begin{proof}
Since by Theorem \ref{thm:StrongDuality} $N^{*} (K,T )=N_{\omega} (K,T )$,
we may take a sequence $\left(\nu_{k}\right)\sub{\cal D}_{+}^{n}$
of discrete covering measures of $K$ by $T$ satisfying 
\[
\int_{\R^{n}}d\nu_{k}\underset{k\to\infty}{\longrightarrow}N^{*} (K,T ).
\]
Since the unit ball of the space of regular Borel measures is sequentially
compact in the weak{*} topology, there exists a subsequence $\left(\nu_{k_{l}}\right)$
and a regular Borel measure $\mu$ such that $\nu_{k_{l}}\overset{w^{*}}{\longrightarrow}\mu$
(this is the well-known Banach-Alaoglu's theorem). Let us show that
$\mu*\one_{T}\ge\one_{K}$. Indeed, let $x\in K$ and let $f\ge\one_{T}$
be a compactly supported continuous function. Then 
\[
1\le\left(\nu_{k_{l}}*f\right)\left(x\right)=\int f\left(x-y\right)d\nu_{k_{l}}\left(y\right)\underset{l\to\infty}{\longrightarrow}\int f\left(x-y\right)d\mu=\left(\mu*f\right)\left(x\right).
\]
Taking a monotone sequence $\left(f_{k}\right)$ of compactly supported
continuous functions satisfying $f_{k}\ge\one_{T}$ and point-wise
converging to $\one_{T}$, it follows by the monotone convergence
theorem that $\left(\mu*\one_{T}\right)\left(x\right)\ge1$, as needed.
Since the covering condition $\mu*\one_{T}\ge\one_{K}$ is preserved
under convex combinations, as is the total measure, it follows that
the set of optimal covering measures of $K$ by $T$ is convex.\end{proof}

\begin{rem}
\label{rem:OptimalCovSep}One might be tempted to ask whether there
exists a measure which is simultaneously optimal-separating and optimal-covering,
this turns out to be, in general, not correct. Indeed, one may consider
the following example. Let $T$ be the cross polytope in $\R^{3}$,
that is, $\conv(\pm e_{1},\pm e_{2},\pm e_{3})$, and let $K=\conv(e_{1},e_{2},e_{3})$
(where $\conv\left(A\right)$ denotes the convex hull of $A$). That
is, $K$ is a two dimensional triangle in $\R^{3}$. Clearly, $N(K,T)=N^{*}(K,T)=1$.
However, if there existed a measure $\mu$ which was both optimal-separating
and optimal-covering then in particular it would have had to be supported
in $K$, therefore we would get that the weighted covering of $K$
by the central section of $T$ with the plane $(1,1,1)^{\perp}$ is
also $1$. This section, which can also be written as $L=\conv(\left(e_{i}-e_{j}\right)/2\,:i,j=1,2,3)$,
is the hexagon $\frac{K-K}{2}$. We claim, however, that $N^{*}(K,L)>1$.
Indeed, the vertex $e_{1}$, for example, is covered by the copies
of $L$ centered at the triangle $\conv(e_{1},\frac{e_{1}+e_{2}}{2},\frac{e_{1}+e_{3}}{2})=\Delta_{1}$
and similarly define $\Delta_{2},\Delta_{3}$. By the assumption of
covering, $\mu(\Delta_{i})\ge 1$. On the other hand, if it were true
that $\mu(K)=1$ we would get, for example, that 
\[
\mu(\frac{e_{1}+e_{2}}{2})=\mu(\Delta_{1}\cap\Delta_{2})=\mu(\Delta_{1})+\mu(\Delta_{2})-\mu(\Delta_{1}\cup\Delta_{2})\ge 2-1=1.
\]
As this would also apply to $\frac{e_{1}+e_{3}}{2},\frac{e_{2}+e_{3}}{2}$,
it is a contradiction. Note that this argument actually shows that
$N^{*}(K,L)=\frac{3}{2}$ and further that the only optimal weighted
covering of $K$ by $L$ is given by the measure $\frac{1}{2}\delta_{\frac{e_{1}+e_{3}}{2}}+\frac{1}{2}\delta_{\frac{e_{2}+e_{3}}{2}}+\frac{1}{2}\delta_{\frac{e_{1}+e_{2}}{2}}$.
 Moreover, note that $K$ and $L$ satisfy that $N_{\omega} (K,L )\neq N (K,L )$,
hence providing a simple example for the fact that classical and weighted
covering numbers are not equal in general. By Proposition \ref{prop:WeakDuality},
$K$ and $L$ also provide a simple example for the fact that classical
covering and separation numbers are not equal in general. 
\end{rem}

\subsection{A Glivenko-Cantelli class}\label{sec:GCC}

In this section our goal is somewhat technical. We wish to use a uniform
measure to bound $N_{\omega} (K,T )$, however it is not
a member of ${\cal D}_{+}^{n}$. We claim that if we find some uniform
covering measure of a set $K$ by a convex set $T$ (supported on
some compact Borel set) with total mass $m$, then $N_{\omega} (K,T )\le m$.
This is because uniform measures can be approximated well by discrete
ones, and requires a proof. To this end, we need to recall the definition
of a Glivenko-Cantelli class. Let $\xi_{1},\xi_{2},\dots$ be a sequence
of i.i.d $\R^{n}$-valued random vectors having common distribution
$P$. The empirical measure $P_{k}$ is formed by placing mass $1/k$
at each of the points $\xi_{1}\left(\omega\right),\xi_{2}\left(\omega\right),\dots,\xi_{k}\left(\omega\right)$
. A class ${\cal A}$ of Borel subsets $A\in{\cal A}$ of $\R^{n}$
is said to be a Glivenko-Cantelli class for $P$ if 
\[
\sup_{A{\cal \in A}}\left|P_{n}\left(A\right)-P\left(A\right)\right|\overset{a.s.}{\longrightarrow}0
\]
In the following lemma, we will invoke a Glivenko-Cantelli theorem
for the class ${\cal C}_{n}$ of convex subsets of $\R^{n}$. Namely,
in \cite[Example 14]{EPS1979} it is shown that if a probability distribution
$P$ satisfies that $P (\partial K )=0$ for all $K\in{\cal C}_{n}$
then ${\cal C}_{n}$ is a Glivenko-Cantelli class for $P$. 
\begin{lem}
\label{lem:GCC}Let $K\sub\R^{n}$ and let $T\sub\R^{n}$ be a convex
set. Let $\mu$ be a uniform measure on some compact Borel set $A\sub\R^{n}$,
that is $d\mu=c\one_{A}dx$ for some $c>0$. Suppose that $\mu$ is
a covering measure of $K$ by $T$. Then 
\[
N_{\omega} (K,T )\le\mu (\R^{n} t).
\]
\end{lem}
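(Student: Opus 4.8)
The plan is to approximate the uniform covering measure $\mu$ by discrete measures arising as empirical measures of i.i.d.\ samples, and to control the approximation uniformly in $x\in K$ via the Glivenko--Cantelli theorem for the class ${\cal C}_{n}$ of convex subsets of $\R^{n}$ recalled above. First I would normalize the problem. Set $m=\mu(\R^{n})=c\,\vol(A)$ and let $P=\mu/m$ be the uniform probability measure on $A$, with density $\frac{1}{\vol(A)}\one_{A}$. For $x\in\R^{n}$ write $x-T=\{x-t:t\in T\}$; since $T$ is convex, so is $x-T$, i.e.\ $x-T\in{\cal C}_{n}$. Because $\one_{T}(x-y)=\one_{x-T}(y)$, the covering hypothesis $\mu*\one_{T}\ge\one_{K}$ becomes, for every $x\in K$,
\[
1\le(\mu*\one_{T})(x)=m\int\one_{x-T}(y)\,dP(y)=m\,P(x-T),
\]
that is, $P(x-T)\ge 1/m$ for all $x\in K$.

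Next I would invoke Glivenko--Cantelli. Since the boundary of any convex set in $\R^{n}$ is Lebesgue-null, we have $P(\partial C)=\frac{1}{\vol(A)}\vol(\partial C\cap A)=0$ for every $C\in{\cal C}_{n}$, so by \cite[Example 14]{EPS1979} the class ${\cal C}_{n}$ is a Glivenko--Cantelli class for $P$. Let $\xi_{1},\xi_{2},\dots$ be i.i.d.\ with law $P$ and let $P_{k}=\frac{1}{k}\sum_{i=1}^{k}\delta_{\xi_{i}}$ be the corresponding empirical measures. Then $\sup_{C\in{\cal C}_{n}}\left|P_{k}(C)-P(C)\right|\to0$ almost surely, so for every $\eps>0$ there is a realization and an index $k$ with
\[
P_{k}(x-T)\ge P(x-T)-\eps\ge\tfrac{1}{m}-\eps\qquad\text{for all }x\in K,
\]
the inequality holding uniformly in $x\in K$ precisely because $\{x-T:x\in K\}\sub{\cal C}_{n}$.

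Finally I would rescale to repair the small deficit $\eps$. Fixing $\eps<1/m$, put $\nu=\frac{m}{1-m\eps}P_{k}$, which is a non-negative discrete measure in ${\cal D}_{+}^{n}$ of total mass $\frac{m}{1-m\eps}$. For every $x\in K$,
\[
(\nu*\one_{T})(x)=\frac{m}{1-m\eps}\,P_{k}(x-T)\ge\frac{m}{1-m\eps}\Big(\tfrac{1}{m}-\eps\Big)=1,
\]
so $\nu$ is a weighted covering of $K$ by $T$, whence $N_{\omega}(K,T)\le\frac{m}{1-m\eps}$. Letting $\eps\to0^{+}$ gives $N_{\omega}(K,T)\le m=\mu(\R^{n})$, as claimed.

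The heart of the argument, and the only genuinely nontrivial point, is that a single random sample must approximate $P(x-T)$ well \emph{simultaneously} for the entire infinite family of translates $\{x-T\}_{x\in K}$; this is exactly the uniform control provided by the Glivenko--Cantelli property of the convex class, and the sole hypothesis one needs to verify for it is $P(\partial C)=0$, which is immediate since the boundary of a convex set has Lebesgue measure zero. The remaining steps (normalization and the rescaling by $\frac{1}{1-m\eps}$ to turn an almost-covering into a covering) are routine.
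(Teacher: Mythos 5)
Your proof is correct and takes essentially the same route as the paper's: normalize $\mu$ to the uniform probability measure on $A$, invoke the Glivenko--Cantelli property of the class of convex sets (justified by $P(\partial C)=0$) to get an empirical measure that covers up to an $\eps$ deficit uniformly over $x\in K$, and rescale to remove the deficit. (Your translate $x-T$ is in fact the correct one; the paper writes $x+T$, an immaterial sign slip since both are convex.)
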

\begin{proof}
Let $\eps>0$. We need to show that there exists a finite discrete
measure $\nu$ such that 
\[
\left(\nu*\one_{T}\right)\left(x\right)\ge1
\]
and $\nu\left(\R^{n}\right)\le\frac{1}{1-\eps}\mu\left(\R^{n}\right)$.
To this end, let $\mu_{0}=\frac{1}{c\vol\left(A\right)}\mu$ be the
uniform probability measure on $A$, let $\xi_{1},\xi_{2},\dots$
be a sequence of i.i.d $\R^{n}-$valued random vectors having common
distribution $\mu_{0}$, and let $\mu_{n}$ be the corresponding empirical
measure. The assumption that $\mu$ is a covering measure of $K$
by $T$ is equivalent to the condition that $\mu_{0}\left(x+T\right)\ge\frac{1}{c\vol\left(A\right)}$
for all $x\in K$. Since $\mu (\partial L )=0$ for all $L\in{\cal C}_{n}$,
it is implied by \cite[Example 14]{EPS1979} that ${\cal C}_{n}$
is a Glivenko-Cantelli class for $\mu_{0}$ and so, for some $k>1$,
\[
\sup_{L\in{\cal C}_{n}}\left|\mu_{0}\left(L\right)-\mu_{k}\left(L\right)\right|<\frac{\eps}{c\vol\left(A\right)}
\]
almost surely. In particular, there exists a discrete measure (one
of the $\mu_{k}$'s) $\nu_{0}=\sum_{i=1}^{k}\frac{1}{k}\delta_{x_{i}}$
for which 
\[
\left(\nu_{0}*\one_{T}\right)\left(x\right)=v_{0}\left(x+T\right)\ge\frac{1-\eps}{c\vol\left(A\right)}
\]
for all $x\in K$. Thus the measure $\nu=\frac{c\vol\left(A\right)}{1-\eps}\nu_{0}$
is a covering measure of $K$ by $T$ with $\nu\left(\R^{n}\right)=\frac{1}{1-\eps}\mu\left(\R^{n}\right)$,
as required. 
\end{proof}

\subsection{\label{sub:Volume-bounds}Volume bounds}

In this section we divide the proof Theorem \ref{thm:VolumeBounds}
into the following two propositions. 
\begin{prop}
\label{prop:volumeUpBound}Let $K\sub\R^{n}$ be compact and let $T\sub\R^{n}$
be compact with non-empty interior. Then 
\[
N^{*}(K,T)\le\frac{\vol\left(K-T\right)}{\vol\left(T\right)}.
\]
Additionally, if $T$ is convex then
\[
N_{\omega}(K,T)\le\frac{\vol\left(K-T\right)}{\vol\left(T\right)}.
\]
\end{prop}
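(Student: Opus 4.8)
The plan is to exhibit a single explicit uniform covering measure whose total mass already equals the target quantity. The natural candidate is the measure $\mu$ with density $\frac{1}{\vol(T)}\one_{K-T}$, that is $d\mu=\frac{1}{\vol(T)}\one_{K-T}\,dx$, supported on the compact set $K-T$. Everything then reduces to checking that this $\mu$ is a covering measure and computing its mass.

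First I would verify the covering condition $\mu*\one_{T}\ge\one_{K}$. For $x\in K$ one computes
\[
\left(\mu*\one_{T}\right)(x)=\frac{1}{\vol(T)}\int_{\R^{n}}\one_{K-T}(y)\,\one_{T}(x-y)\,dy=\frac{\vol\left((x-T)\cap(K-T)\right)}{\vol(T)}.
\]
The crucial observation is the translation inclusion $x-T\sub K-T$, valid for every $x\in K$ because $x-t\in K-T$ whenever $t\in T$. Hence $(x-T)\cap(K-T)=x-T$ has volume $\vol(T)$, so the convolution equals exactly $1$ on all of $K$, confirming that $\mu$ is a covering measure. Its total mass is $\int d\mu=\frac{\vol(K-T)}{\vol(T)}$, which immediately gives $N^{*}(K,T)\le\frac{\vol(K-T)}{\vol(T)}$. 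I would emphasize that no convexity of $T$ enters this part: the only ingredient is the set inclusion $x-T\sub K-T$, which holds for arbitrary compact $T$.

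For the stronger statement about $N_{\omega}$, I would invoke Lemma \ref{lem:GCC}. The measure $\mu$ just constructed is uniform on the compact Borel set $A=K-T$ and is a covering measure of $K$ by the convex set $T$, so the hypotheses of that lemma are met and it yields $N_{\omega}(K,T)\le\mu(\R^{n})=\frac{\vol(K-T)}{\vol(T)}$.

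The only genuinely delicate point is the passage from $N^{*}$ to $N_{\omega}$. A priori the discrete weighted covering number could exceed the measure-theoretic one, and it is precisely the Glivenko-Cantelli approximation of the uniform measure by empirical (discrete) measures, packaged in Lemma \ref{lem:GCC}, that bridges this gap. This is exactly why convexity of $T$ is required for the second assertion but is dispensable for the first.
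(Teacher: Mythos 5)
Your proof is correct and coincides with the argument the paper itself gives (as its ``alternative'' route, and the one it actually needs for the $N_{\omega}$ statement): the uniform measure $\one_{K-T}\,\frac{dx}{\vol(T)}$ is a covering measure of $K$ by $T$ with total mass $\frac{\vol(K-T)}{\vol(T)}$, and Lemma \ref{lem:GCC} upgrades the bound from $N^{*}$ to $N_{\omega}$ when $T$ is convex. The only difference is that the paper's headline derivation of the $N^{*}$ bound instead goes through strong duality, bounding $M^{*}(K,T)$ by $\frac{\vol(K+T)}{\vol(T)}$; you bypass that, which is perfectly fine.
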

\begin{proof}
By Theorem \ref{thm:StrongDuality}, it suffices to prove that $M^{*}(K,T)\le\frac{\vol\left(K+T\right)}{\vol\left(T\right)}$.
Let $\mu\in{\cal B}_{+}^{n}$ be a $T$-separated measure, that is,
$\one_{T}*\mu\le1$. Then
\begin{align*}
\int_{K}\vol\left(T\right)d\mu\left(x\right) & =\int_{K}d\mu\left(x\right)\int_{K+T}\one_{T}\left(y-x\right)dy=\int_{K+T}dy\int_{K}\one_{T}\left(y-x\right)d\mu\left(x\right)\\
 & \le\int_{K+T}\left(\one_{T}*\mu\right)\left(y\right)dy\le\int_{K+T}dy=\vol\left(K+T\right)
\end{align*}
 and so $\vol (T )M^{*} (K,T )\le\vol (K+T )$
as claimed.

Alternatively, one may verify that the measure $\mu_{0}=\one_{K-T}\frac{dx}{\vol\left(T\right)}$
is a covering measure of $K$ by $T$, from which the claim also follows.
By Lemma \ref{lem:GCC}, the latter argument implies that 
\[
N_{\omega}\left(K,T\right)\le\frac{\vol\left(K-T\right)}{\vol\left(T\right)}.
\]
\end{proof}
\begin{prop}
Let $K\sub\R^{n}$ be compact and let $T\sub\R^{n}$ be compact with
non-empty interior. Then 
\[
\max\left\{ \frac{\vol\left(K\right)}{\vol\left(T\right)},1\right\} \le M_{\omega}(K,T).
\]
\end{prop}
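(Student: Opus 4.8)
The goal is to prove the lower bound
\[
\max\left\{ \frac{\vol\left(K\right)}{\vol\left(T\right)},1\right\} \le M_{\omega}(K,T).
\]
Since $M_\omega(K,T) = M^*(K,T)$ by Theorem \ref{thm:StrongDuality}, it suffices to exhibit, for each of the two lower bounds, a single $T$-separated measure $\mu \in {\cal B}_+^n$ (i.e.\ satisfying $\one_T * \mu \le 1$) whose mass on $K$ realizes that bound.

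My plan is to treat the two quantities in the maximum separately. For the bound $\vol(K)/\vol(T) \le M^*(K,T)$, the natural candidate is the \emph{uniform measure} on $K$ rescaled so that it is $T$-separated: set $\mu_0 = \tfrac{1}{\vol(T)}\one_K\,dx$. I would verify that $\one_T * \mu_0 \le 1$ by the direct computation
\[
\left(\one_T * \mu_0\right)(x) = \frac{1}{\vol(T)}\int \one_T(x-y)\one_K(y)\,dy = \frac{\vol\bigl((x-T)\cap K\bigr)}{\vol(T)} \le 1,
\]
since $(x-T)\cap K \subseteq x-T$ has volume at most $\vol(T)=\vol(x-T)$. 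Then $\int_K d\mu_0 = \vol(K)/\vol(T)$, giving the first bound immediately.

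For the bound $1 \le M^*(K,T)$, the natural candidate is a point mass. Since $K$ is nonempty, pick any point $x_0 \in K$ and set $\mu_1 = \delta_{x_0}$. Then $\left(\one_T * \mu_1\right)(x) = \one_T(x-x_0) \in \{0,1\} \le 1$, so $\mu_1$ is $T$-separated, and $\int_K d\mu_1 = 1$. This handles the second term of the maximum. Taking the larger of the two admissible values gives the claimed lower bound for $M_\omega(K,T)=M^*(K,T)$.

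I do not anticipate a serious obstacle here; the proof is essentially a matter of writing down the two test measures and checking the separation inequality. The only mild subtlety is the rescaled-uniform computation, where one must note that $(x-T)\cap K$ is contained in a translate of $T$ so its volume is bounded by $\vol(T)$ regardless of how $x$ and $K$ sit relative to each other; no convexity of $K$ or $T$ is needed. One should also observe that both candidate measures are genuine non-negative regular Borel measures, so they are admissible in the supremum defining $M^*$, and that the equality $M_\omega=M^*$ invoked from Theorem \ref{thm:StrongDuality} is what lets us phrase everything in terms of the cleaner continuous quantity $M^*$ rather than the discrete $M_\omega$.
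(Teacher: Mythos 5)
Your proof is correct and coincides with the argument the paper itself offers as its alternative route: you exhibit the uniform measure $\frac{1}{\vol(T)}\one_{K}\,dx$ as a $T$-separated witness for $M^{*}(K,T)\ge\vol(K)/\vol(T)$, a point mass for $M^{*}(K,T)\ge 1$, and then invoke $M_{\omega}=M^{*}$ from Theorem \ref{thm:StrongDuality}. The paper's primary argument is the dual one --- it lower-bounds $N^{*}(K,T)$ over \emph{all} covering measures using the same Fubini-type volume computation --- but the substance is the same and your version is complete.
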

\begin{proof}
By Theorem \ref{thm:StrongDuality}, it suffices to prove that $\max\left\{ \frac{\vol\left(K\right)}{\vol\left(T\right)},1\right\} \le N^{*}(K,T)$.
Let $\mu\in{\cal B}_{+}^{n}$ be a covering measure of $K$ by $T$,
that is $\mu*\one_{T}\ge\one_{K}$. Then 
\begin{align*}
\int\vol\left(T\right)d\mu\left(x\right) & =\int d\mu\left(x\right)\int_{\R^{n}}\one_{T}\left(y-x\right)dy=\int_{\R^{n}}dy\int\one_{T}\left(y-x\right)d\mu\left(x\right)\\
 & \ge\int\one_{K}\left(y\right)dy=\vol\left(K\right)
\end{align*}
and so $N^{*}\left(K,T\right)\ge\frac{\vol\left(K\right)}{\vol\left(T\right)}$.$ $
Moreover, let $x\in K$. Then 
\[
1\le\left(\mu*\one_{T}\right)\left(x\right)=\int_{\R^{n}}\one_{T}\left(x-y\right)d\mu\left(y\right)\le\int_{\R^{n}}d\mu
\]
 and so $N^{*}\left(K,T\right)\ge1$. 

Alternatively, one may verify that the measure $\mu_{0}=\one_{K}\frac{dx}{\vol\left(T\right)}$
is $T-$separated in $K$, from which the claim also follows. The
fact that $1\le N^{*}\left(K,T\right)$ also follows from 
\[
1\le M\left(K,T\right)\le M^{*}\left(K,T\right)=N^{*}\left(K,T\right).
\]

\end{proof}

\subsection{\label{sub:delta-equivalence}An equivalence between classical and
weighted covering}

In this section we prove Theorem \ref{thm:DeltaEquiv}
\begin{proof}
[Proof of Theorem \ref{thm:DeltaEquiv}]Fix $\delta>0$. Let $\left(x_{i},\omega_{i}\right)_{i\in I}$
be a finite weighted discrete covering of $K$ by $T_{1}$ with 
\[
\sum_{i\in I}\omega_{i}<N_{\omega}\left(K,T_{1}\right)+\eps.
\]
Without loss of generality we may assume that $\omega_{i}$ are rational
numbers and moreover, by allowing repetitions of the covering points,
we may assume that for all $i$, $\omega_{i}=\frac{1}{M}$ for some
arbitrarily large $M\in\bbN$. Denote $N=\left\lfloor N_{\omega}\left(K,T_{1}\right)\right\rfloor $
and let $0<\eps<1$ be small enough so that $N+1\le N_{\omega}\left(K,T_{1}\right)+\eps$.
$ $Our aim is to generate a classical covering of $K$ by $T_{1}+T_{2}$
from the above fractional covering by a random process, with cardinality
not larger than 
\[
\ln\left(4\overline{N}\left(K,T_{2}\right)\right)N_{\omega}\left(K,T_{1}\right)+\sqrt{\ln\left(4\overline{N}\left(K,T_{2}\right)\right)N_{\omega}\left(K,T_{1}\right)}.
\]
To this end, let $S$ be an integer to be determined later and let
$L>1$ be some real number also to be determined later. Each point
will be chosen independently with probability $p=\frac{S}{M}$. We
claim that with positive probability, for $S=\ln\left(4\overline{N}\left(K,T_{2}\right)\right)$
and $L=1+\frac{1}{\sqrt{S\left(N+1\right)}}$, the generated set is
a covering of $K$ by $T_{1}+T_{2}$ and at the same time the cardinality
of the generated set is not greater than 
\[
LS\left(N+1\right)\le LS\left(\sum\omega_{i}+1\right)\le LS\left(N_{\omega}\left(K,T_{1}\right)+\eps+1\right).
\]
First, we bound the probability that more than $LS\left(N+1\right)$
will turn out positive. Let $X_{i}$ denote the Bernoulli random variable
corresponding to $x_{i}$ and let $X$ denote their sum. Note that
there are at most $M\left(N+1\right)$ trials as $\sum_{i\in I}\frac{1}{M}<N_{\omega}\left(K,T_{1}\right)+\eps\le N+1$.
Denote the cardinality of $I$ by $\left|I\right|$. A standard Chernoff
bound tells us that this probability can be bounded as follows. For
any $t>0$ 
\begin{align*}
\PP\left(X\ge LS\left(N+1\right)\right) & =\PP\left(e^{Xt}\ge e^{LSNt}\right)\le\min_{t>0}\frac{\EE\left(e^{tX_{1}}\cdots e^{tX_{\left|I\right|}}\right)}{e^{LS\left(N+1\right)t}}\le\min_{t>0}\frac{\left[pe^{t}+\left(1-p\right)\right]^{M\left(N+1\right)}}{e^{LS\left(N+1\right)t}}\\
 & =\left(\frac{1}{L}\right)^{LS\left(N+1\right)}\left[\frac{\left(1-p\right)}{1-Lp}\right]^{\left(N+1\right)M\left(1-Lp\right)}\\
 & =\left(\frac{1}{L}\right)^{LS\left(N+1\right)}\left[1+\frac{S\left(L-1\right)}{M-LS}\right]^{\left(N+1\right)\left(M-LS\right)}\\
 & \simeq\left(\frac{e^{L-1}}{L^{L}}\right)^{S\left(N+1\right)}
\end{align*}
where at the third equality the minimum is attained at $e^{t}=L\cdot\frac{1-p}{1-Lp}$
and the last step holds for sufficiently large $M$ compared with
$S$. Set $L=1+\xi$, then for $0<\xi\le1$, one can verify that 
\[
\PP\left(X\ge LS\left(N+1\right)\right)\le\left(\frac{e^{L-1}}{L^{L}}\right)^{S\left(N+1\right)}\le e^{-S\left(N+1\right)\xi/3}.
\]

Next, we show that with sufficiently high probability our generated
set is a covering of $K$ by $T_{1}+T_{2}.$ To this end, pick a minimal
covering $\left\{ y_{i}\right\} \sub K$ (we insist the points of
the net belong to $K$) of $K$ by $ $$T_{2}$.  The cardinality
of such a minimal net is $\overline{N}\left(K,T_{2}\right)$. If every
point $y_{i}$ is covered by a translate $x_{j}+T_{1}$ then the whole
of $K$ is covered by the translates $x_{j}+T_{1}+T_{2}$ of our randomly
generated set, as we desire. Let us consider one specific point $y_{i}=y$
and check the probability that it is covered by our randomly generated
set. Since we insisted that $y\in K$ we know that 
\[
\sum_{\left\{ i\in I\,:\, y\in x_{i}+T_{1}\right\} }\frac{1}{M}\ge1
\]
which means that at least $M$ of the original translates $x_{i}+T$
include $y$. Therefore, the probability that $ $$y$ is not covered
is less than or equal to $\left(1-\frac{S}{M}\right)^{M}\le e^{-S}$.
Thus, the probability that one or more of the $T_{2}-$covering points
$\left\{ y_{i}\right\} $ is not covered is bounded from above by
$\overline{N}\left(K,T_{2}\right)e^{-S}$. 

To summarize the above, we bounded the probability that either $K$
is not covered or the generated set consists of more than $LS\left(N+1\right)$
points by 
\[
e^{-S\left(N+1\right)\xi/3}+\overline{N}\left(K,T_{2}\right)e^{-S}
\]
and so it is left to choose $S$ and $\xi$ so that this bound is
less than $1.$ As one can verify, the choices $\xi=\frac{1}{\sqrt{S\left(N+1\right)}}$
and $S=\ln\left(4\overline{N}\left(K,T_{2}\right)\right)$ satisfy
this requirement. Thus, $N (K,T_{1}+T_{2})$ is bounded
by 
\begin{align*}
LS\left(N+1\right) & =\left(1+\frac{1}{\sqrt{S\left(N+1\right)}}\right)\ln\left(4\overline{N}\left(K,T_{2}\right)\right)\left(N+1\right)\\
 & =\left(1+\frac{1}{\sqrt{\ln\left(4\overline{N}\left(K,T_{2}\right)\right)\left(N+1\right)}}\right)\ln\left(4\overline{N}\left(K,T_{2}\right)\right)\left(N+1\right)\\
 & \le\ln\left(4\overline{N}\left(K,T_{2}\right)\right)\left(N_{\omega}\left(K,T_{1}\right)+1\right)+\sqrt{\ln\left(4\overline{N}\left(K,T_{2}\right)\right)\left(N_{\omega}\left(K,T_{1}\right)+1\right)}
\end{align*}

\end{proof}

\subsection{\label{sec:The-metric-space-setting}The metric-space setting}

The notions of covering and separation make sense also in the metric
space setting. Let$\left(X,d\right)$ be a metric space (with the
induced metric topology), and $K\subset X$ some compact subset. We
shall denote the $\eps$-covering number of $K$ by 

\[
N (K,\eps )=\min\left\{ N\in\bbN\,:\,\exists x_{1},\dots x_{N}\in\R^{n};\,\, K\sub\bigcup_{i=1}^{N}B(x_{i},\eps)\right\} 
\]
where $B(x,\eps)=\{y\in X\,:\, d(x,y)\le\eps\}$. Similarly

\[
\overline{N} (K,\eps )=\min\left\{ N\in\bbN\,:\,\exists x_{1},\dots x_{N}\in K;\,\, K\sub\bigcup_{i=1}^{N}B(x_{i},\eps)\right\} 
\]

The corresponding notion of the separation number is defined to be
the maximal number of non-overlapping $\eps$-balls centered in $K$;
\[
M (K,\eps )=\max\left\{ M\in\bbN\,:\,\exists x_{1},\dots x_{M}\in K,\,\, B(x_{i},\eps)\cap B(x_{j},\eps)=\emptyset\,\,\forall i\neq j\right\} .
\]
In this case it makes sense also to define 
\[
\overline{M} (K,\eps )=\max\left\{ M\in\bbN\,:\,\exists x_{1},\dots x_{M}\in K,\,\, B(x_{i},\eps)\cap B(x_{j},\eps)\cap K=\emptyset\,\,\forall i\neq j\right\} ,
\]
and one should note that in the case $K=X$ these notions of course
coincide. Also note that the metric setting is inherently centrally
symmetric. However, since we no longer work in a linear space, some
of the arguments in the preceding sections need to be altered. 

Let us define weighted covering and separation in the metric setting,
and list the relevant theorems corresponding to those proved in previous
sections which hold in this setting. We shall remark only on the parts
of the proofs which are not identical to those from the linear realm.
\begin{defn}
Let $(X,d)$ be a metric space and $K\subset X$ compact. A sequence
of pairs $S=\{(x_{i}\,,\,\omega_{i}):\, x_{i}\in X,\,\omega_{i}\in\R^{+}\}_{i=1}^{N}$
with $N\in\bbN$ points and weights is said to be an weighted $\eps$-covering
of $K$ if for all $x\in K$, $\sum_{\{i:x\in B(x_{i}.\eps)\}}\omega_{i}\ge 1$.
The total weight of the covering is denoted by $\omega(S)=\sum_{i=1}^{N}\omega_{i}$.
The weighted $\eps$-covering number of $K$ is defined to be the
infimal total weight over all weighted $\eps$-coverings of $K$ and
is denoted by $N_{\omega} (K,\eps )$.
\end{defn}
Similarly, we may define (in a slightly different language) 
\[
\overline{N}_{\omega} (K,\eps )=\inf\left\{ \int_{X}d\nu\,:\,\forall x\,\,\int\one_{B(y,\eps)}(x)d\nu(y)\ge\one_{K}(x)\,,\nu\in{\cal D}_{+}(X)\,\,{\rm with}\,\,{\rm supp}\left(\nu\right)\sub K\right\} 
\]
where ${\cal D}_{+}(X)$ denotes all non-negative finite discrete
measures on $X$. Let ${\cal B}_{+}(X)$ denote all non-negative Borel
measures on $X$. The weighted covering number with respect to general
measures is defined by
\[
N^{*} (K,\eps )=\inf\left\{ \int_{X}d\mu\,:\,\forall x\,\,\int\one_{B(y,\eps)}(x)d\mu(y)\ge\one_{K}(x)\,,\mu\in{\cal B}_{+}(X)\right\} .
\]

The weighted notions of the separation number are defined similarly;
a measure $\rho$ is said to be $\eps$-separated if for all $x\in X$
, $\int\one_{B(y,\eps)}(x)d\rho(y)\le 1$ and $\eps$-separated in
$K$ if $\int\one_{B(y,\eps)}(x)d\rho(y)\le 1$ for all $x\in K$.
The weighted separation numbers, corresponding to $N_{\omega}\left(K,\eps\right)$,
$\overline{N}_{\omega}\left(K,\eps\right)$ and $N^{*}\left(K,\eps\right)$
are respectively defined by:

\[
M_{\omega}\left(K,\eps\right)=\sup\left\{ \int_{K}d\rho\,:\,\,\forall x\in X\,\int\one_{B(y,\eps)}(x)d\rho(y)\le 1,\,\,\rho\in{\cal D}_{+}(X)\right\} ,
\]
\[
\overline{M}_{\omega}\left(K,\eps\right)=\sup\left\{ \int_{K}d\rho\,:\,\,\forall x\in K\,\int\one_{B(y,\eps)}(x)d\rho(y)\le 1,\,\,\rho\in{\cal D}_{+}(X)\right\} 
\]
and
\[
M^{*}\left(K,\eps\right)=\sup\left\{ \int_{K}d\rho\,:\,\,\forall x\in X\,\int\one_{B(y,\eps)}(x)d\rho(y)\le 1,\,\,\rho\in{\cal B}_{+}(X)\right\} .
\]
\\

Our first result is a weak duality between weighted covering and separation
numbers;
\begin{thm}
\label{thm:MetricWeakDuality}Let $(X,d)$ be a metric space, $K\sub X$
compact and let $\eps>0$. Then 
\[
M_{\omega} (K,\eps )\le M^{*}  (K,\eps )\le N^{*} (K,\eps )\le N_{\omega} (K,\eps )
\]
\end{thm}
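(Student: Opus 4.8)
The plan is to read the four-term chain as three separate inequalities, two of which are essentially tautological and one of which is the genuine weak-duality statement, so I would treat them in that order.

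For the two outer inequalities I would simply invoke the inclusion ${\cal D}_{+}(X)\sub{\cal B}_{+}(X)$. A discrete $\eps$-separated measure is in particular a Borel $\eps$-separated measure, so the supremum defining $M_{\omega}(K,\eps)$ ranges over a subfamily of the competitors for $M^{*}(K,\eps)$, giving $M_{\omega}(K,\eps)\le M^{*}(K,\eps)$. Dually, a discrete weighted $\eps$-covering $\sum\omega_{i}\delta_{x_{i}}$ is a Borel covering measure, so the infimum defining $N^{*}(K,\eps)$ ranges over a larger family than the one defining $N_{\omega}(K,\eps)$, giving $N^{*}(K,\eps)\le N_{\omega}(K,\eps)$. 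No geometry is needed here.

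The heart of the matter is the middle inequality $M^{*}(K,\eps)\le N^{*}(K,\eps)$, which I would prove by transcribing the Fubini argument of Proposition \ref{prop:WeakDuality} to the metric setting. The one structural change is that the role played there by the reflection $-T$ is now played by the symmetry of the metric: since $d(x,y)=d(y,x)$ we have the pointwise identity $\one_{B(y,\eps)}(x)=\one_{B(x,\eps)}(y)$, so that a covering measure and a separated measure pair up without any sign flip (this is the manifestation of the fact, noted in the text, that the metric setting is inherently centrally symmetric). Concretely, let $\mu$ be any covering measure of $K$, so that $\int\one_{B(y,\eps)}(x)\,d\mu(y)\ge\one_{K}(x)$ for every $x$, and let $\rho$ be any $\eps$-separated measure, so that $\rho(B(z,\eps))=\int\one_{B(z,\eps)}(y)\,d\rho(y)\le1$ for every $z$. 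Then
\begin{align*}
\rho(K)=\int\one_{K}\,d\rho & \le\int\Big(\int\one_{B(y,\eps)}(x)\,d\mu(y)\Big)d\rho(x)=\int\Big(\int\one_{B(y,\eps)}(x)\,d\rho(x)\Big)d\mu(y)\\
 & =\int\rho(B(y,\eps))\,d\mu(y)\le\mu(X),
\end{align*}
where the first equality in the second line uses the symmetry identity and the final inequality is the separation hypothesis applied at $z=y$. Taking the supremum over $\rho$ and the infimum over $\mu$ then yields $M^{*}(K,\eps)\le N^{*}(K,\eps)$.

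The only point that requires care — and the step I expect to be the main obstacle — is justifying the interchange of the order of integration. For this I would first check that $(x,y)\mapsto\one_{\{d(x,y)\le\eps\}}$ is jointly Borel measurable, which follows from the continuity of $d$ on $X\times X$, and then reduce to a situation in which Tonelli's theorem applies. If $\mu(X)=\infty$ the asserted inequality is trivial, so I may assume $\mu$ is finite; for $\rho$ I would use that $K$ is compact and every ball has $\rho$-measure at most $1$, so finitely many such balls cover $K$ and hence $\rho(K)<\infty$, which lets me treat both measures as finite and apply Tonelli to the nonnegative integrand. With the swap justified the chain above closes, and combining it with the two inclusion inequalities completes the proof of Theorem \ref{thm:MetricWeakDuality}.
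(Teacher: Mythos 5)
Your proposal is correct and follows essentially the same route as the paper: the outer inequalities from the inclusion of discrete measures among Borel measures, and the middle one via the Fubini swap using the symmetry $\one_{B(y,\eps)}(x)=\one_{B(x,\eps)}(y)$ in place of the reflection $-T$. The only difference is that you spell out the measurability and finiteness justifications for Tonelli, which the paper leaves implicit.
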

\begin{proof}
The first and last inequalities follow by definition, and so we should
only prove the center inequality. To this end let $\mu$ be a weighted
$\eps$-covering measure of $K$ and let $\rho$ be a weighted $\eps$-separated
measure. By our assumptions we have that $\int\one_{B(y,\eps)}(x)d\rho(y)\le1$
and $\int\one_{B(y,\eps)}(x)d\mu(y)\ge\one_{K}(x)$ for all $x\in X$.
Thus 
\begin{align*}
\int_{K}d\rho\left(x\right)= & \int\one_{K}\left(x\right)\cdot d\rho\left(x\right)\le\int\int\one_{B(y,\eps)}(x)d\mu(y)d\rho\left(x\right)\\
= & \int d\rho\left(x\right)\int d\mu\left(y\right)\one_{B(y,\eps)}(x)\\
= & \int d\mu\left(y\right)\int d\rho\left(x\right)\one_{B(x,\eps)}(y)\\
\le & \int d\mu\left(y\right)
\end{align*}
and so $M^{*}\left(K,\eps\right)\le N^{*}\left(K,\eps\right)$. Similarly,
one may show that $\overline{M}_{\omega}\left(K,\eps\right)\le\overline{N}_{\omega}\left(K,\eps\right)$.
\end{proof}
As a corollary of Theorem \ref{thm:MetricWeakDuality}, we immediately
get the following equivalence relation between the classical and weighted
covering numbers:
\begin{cor}
\label{cor:MetricHomothetyEq}Let $(X,d)$ be a metric space, $K\sub X$
compact and let $\eps>0$. Then
\[
N (K,2\eps )\le N_{\omega} (K,\eps )\le N(K,\eps )
\]
\end{cor}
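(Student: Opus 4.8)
The plan is to derive both inequalities from the weak duality of Theorem \ref{thm:MetricWeakDuality} together with the classical equivalence between packing and covering in a metric space. The right-hand inequality $N_\omega(K,\eps)\le N(K,\eps)$ is immediate: given any classical $\eps$-cover $\{x_1,\dots,x_N\}$ of $K$, the weighted covering assigning unit weight to each $x_i$ satisfies the covering condition with total weight $N$, so $N_\omega(K,\eps)\le N$. For the left-hand inequality, I would establish the chain
\[
N(K,2\eps)\le M(K,\eps)\le M_\omega(K,\eps)\le N^{*}(K,\eps)\le N_\omega(K,\eps),
\]
in which the last two steps are exactly the weak duality $M_\omega(K,\eps)\le M^{*}(K,\eps)\le N^{*}(K,\eps)\le N_\omega(K,\eps)$ supplied by Theorem \ref{thm:MetricWeakDuality}.

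It then remains to justify the first two steps, both of which are metric-space analogues of standard facts. For $N(K,2\eps)\le M(K,\eps)$, I would take a maximal $\eps$-separated family $\{x_1,\dots,x_M\}\sub K$, that is, a maximal family of points in $K$ with $B(x_i,\eps)\cap B(x_j,\eps)=\emptyset$ for all $i\neq j$ (such a maximal family is finite because $K$ is compact, hence totally bounded). By maximality no further point $x\in K$ can be adjoined, which by the triangle inequality forces $d(x,x_i)\le 2\eps$ for some $i$; thus $\{x_1,\dots,x_M\}$ is a $2\eps$-cover of $K$ and $N(K,2\eps)\le M$. For $M(K,\eps)\le M_\omega(K,\eps)$, I would promote such a separated family to the discrete measure $\rho=\sum_{i=1}^{M}\delta_{x_i}$, which is supported in $K$; since the balls $B(x_i,\eps)$ are pairwise disjoint, every $x\in X$ lies in at most one of them, so
\[
\int\one_{B(y,\eps)}(x)\,d\rho(y)=\sum_{i=1}^{M}\one_{B(x_i,\eps)}(x)\le 1,
\]
showing that $\rho$ is weighted $\eps$-separated with $\int_K d\rho=M$. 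Hence $M_\omega(K,\eps)\ge M(K,\eps)$.

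I expect the only delicate point to be the bookkeeping forced by the purely metric setting, where there is no linear structure to appeal to and the balls $B(x,\eps)$ are \emph{closed}: the implication that $d(x,x_i)>2\eps$ yields $B(x,\eps)\cap B(x_i,\eps)=\emptyset$, as well as its use in the maximality argument, must be read off directly from the triangle inequality rather than from any midpoint or symmetry consideration. One should also note that the doubling factor $2$ here plays the role of the passage from $T$ to $T-T=2T$ in the linear Corollary \ref{cor:HomothetyEq.}. Once these elementary observations are in place, the displayed chain closes, and combining it with the trivial right-hand inequality yields $N(K,2\eps)\le N_\omega(K,\eps)\le N(K,\eps)$, as claimed.
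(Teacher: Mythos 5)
Your proposal is correct and follows essentially the same route as the paper: the right-hand inequality by assigning unit weights to a classical cover, the left-hand one via the chain $N(K,2\eps)\le M(K,\eps)\le M_{\omega}(K,\eps)\le N_{\omega}(K,\eps)$, with the first step coming from a maximal $\eps$-separated family and the triangle inequality, and the rest from Theorem \ref{thm:MetricWeakDuality}. The paper's proof is just a more compressed version of the same argument, so there is nothing to add.
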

\begin{proof}
By Theorem \ref{thm:MetricWeakDuality}, $M\left(K,\eps\right)\le M_{\omega}\left(K,\eps\right)\le N_{\omega}\left(K,\eps\right)\le N\left(K,\eps\right)$
and so we only need to verify the inequality $N\left(K,2\eps\right)\le M\left(K,\eps\right)$.
Indeed, let $\left(x_{i}\right)_{i=1}^{N}\sub K$ be $\eps$-separated.
Hence, for every $x\in K$ there exists some $i\in1,\dots,N$ such
that $B\left(x,\eps\right)\cap B\left(x_{i},\eps\right)\neq\emptyset$
which by the triangle inequality means that $x\in B\left(x_{i},2\eps\right)$.
Thus, $\left(x_{i}\right)_{i=1}^{N}$ is a $2\eps$-covering of $K$
and so $N (K,2\eps )\le M (K,\eps)$, as needed. 
\end{proof}

\section{\label{sec:Hadwiger}The Levi-Hadwiger problem}

In this section we prove Theorem \ref{thm:WeightedHadwiger} and Corollary
\ref{cor:Hadwiger}. To this end, we shall need some preliminary results,
and before that, some notation.

\subsection{Preliminary results}

Denote the Euclidean open ball of radius $r>0$ and centered at $x$
by $B\left(x,r\right)\sub\R^{n}$. For Denote the segment between
two vectors $x,y\in\R^{n}$ by $\left[x,y\right]=\left\{ \lambda x+\left(1-\lambda\right)y\,:\,0\le\lambda\le1\right\} $.
Let $\partial A$ denote the boundary of a set $A\sub\R^{n}$.

\subsubsection{A Homothetic intersection}

We will need the following lemma, the proof of which was kindly shown
to us by Rolf Schneider and is reproduced here.
\begin{lem}
\label{lem:CcapC}[Schneider] Let $K\sub\R^{n}$ be a centrally-symmetric
convex body. Let $a\in K$ and let $p$ be the intersection point
of $\partial K$ with the ray emanating from $0$ and passing through
a. Assume that $\left(a+K\right)\cap K$ is homothetic to $K$. Then
there exists a closed convex cone $C\sub\R^{n}$ (with vertex $\left\{ 0\right\} $)
such that $K=\left(p-C\right)\cap\left(C-p\right)$.\end{lem}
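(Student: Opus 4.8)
The plan is to turn the homothety hypothesis into an exact self-similarity identity for $K$ and then iterate it until the two tangent cones appear.

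First I would normalize the intersection. Write $a=tp$ with $t=\|a\|_{K}\in(0,1]$ (the gauge of $K$), and set $D:=(a+K)\cap K$. Since $K=-K$, the reflection $x\mapsto a-x$ interchanges $K$ and $a+K$, hence maps $D$ onto itself, so $D$ is centrally symmetric about $a/2$. Thus the hypothesis $D=c+\mu K$ forces $c=a/2$, i.e. $D=\tfrac a2+\mu K$. To identify $\mu$, note $\|p-a\|_{K}=1-t\le1$, so $p\in D\cap\partial K=\partial D$; comparing with $\partial D=\tfrac a2+\mu\partial K$ gives $\mu=1-t/2\in[\tfrac12,1)$ and shows that $p$ is the fixed point of the homothety $\phi(x):=\tfrac a2+\mu x=p+\mu(x-p)$. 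In other words the hypothesis is exactly
\[
(a+K)\cap K=\phi(K).
\]

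Next I would symmetrize this by applying $\phi^{-1}$, the expansion from $p$ with ratio $\lambda:=\mu^{-1}>1$. Putting $K_{1}:=\phi^{-1}(K)=\lambda K+(1-\lambda)p\supseteq K$ and using $-K=K$, a short computation gives $\phi^{-1}(a+K)=-K_{1}$, so the identity becomes the clean symmetric self-similarity
\[
K=(\lambda K+w)\cap(\lambda K-w),\qquad w:=(1-\lambda)p.
\]
Now comes the crux: unfold this $n$ times, replacing each $\lambda K$ by $(\lambda^{2}K+\lambda w)\cap(\lambda^{2}K-\lambda w)$ and so on, to obtain $K=\bigcap_{\varepsilon\in\{\pm1\}^{n}}\bigl(\lambda^{n}K+c_{\varepsilon}w\bigr)$ with $c_{\varepsilon}=\sum_{i=1}^{n}\varepsilon_{i}\lambda^{i-1}$. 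The point is that all these constraints are implied by the two extreme ones: for a convex body $B$ and $\gamma\in[\beta,\alpha]$, convexity gives $(B+\alpha w)\cap(B+\beta w)\sub B+\gamma w$ (write $\gamma$ as a convex combination of $\alpha,\beta$). Hence only $c=\pm\tfrac{\lambda^{n}-1}{\lambda-1}$ survive, and since $w=(1-\lambda)p$ these two terms are exactly $K_{n}:=p+\lambda^{n}(K-p)$ and $-K_{n}$, so that
\[
K=K_{n}\cap(-K_{n})\qquad\text{for every }n\ge1.
\]

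Finally I would pass to the limit. Let $C:=T_{-p}K$ be the tangent cone of $K$ at $-p$ (apex at the origin); by central symmetry $T_{p}K=-C$, so the increasing bodies $K_{n}$ satisfy $\overline{\bigcup_{n}K_{n}}=p+T_{p}K=p-C$ and $\overline{\bigcup_{n}(-K_{n})}=C-p$. The inclusion $K\sub(p-C)\cap(C-p)$ is automatic. For the reverse inclusion, take $x\in(p-C)\cap(C-p)$; since $0\in\mathrm{int}(K)$, for $\theta\in[0,1)$ the point $\theta x$ lies in the interior of both cones, hence in $\bigcup_{n}K_{n}$ and in $\bigcup_{n}(-K_{n})$, so $\theta x\in K_{n}\cap(-K_{n})=K$ for large $n$; letting $\theta\to1^{-}$ and using that $K$ is closed gives $x\in K$. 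This yields $K=(p-C)\cap(C-p)$, as required. The main obstacle is the middle step: recognizing that the homothety hypothesis iterates to $K=(\lambda K+w)\cap(\lambda K-w)$ and that its $n$-fold unfolding collapses, purely by convexity, to the two homothets $K_{n},-K_{n}$ whose increasing unions fill out the tangent cones; the initial normalization and the final limiting argument are then routine.
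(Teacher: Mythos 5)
Your proof is correct, and it takes a genuinely different route from the one in the paper (due to Schneider). Both arguments open identically: central symmetry forces the center of the homothet $(a+K)\cap K$ to be $a/2$, and evaluating at $p$ identifies the ratio $\mu=1-t/2$ and shows $p$ is the fixed point, i.e.\ $(a+K)\cap K=p+\mu(K-p)$. From there the paper argues directly and geometrically: it defines the cone $C$ generated by $p-{\rm int}(K)$, supposes some $x\in(p-C)\cap(C-p)$ lies outside $K$, and reaches a contradiction by a planar perturbation argument inside the quadrangle with vertices $\pm p,y,z$, where $[p,y]=K\cap[p,x]$ and $[-p,z]=K\cap[-p,x]$. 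You instead recast the hypothesis as the self-similarity $K=(\lambda K+w)\cap(\lambda K-w)$ with $\lambda=\mu^{-1}$, iterate it, collapse the $2^{n}$ resulting constraints to the two extremal ones via the elementary inclusion $(B+\alpha w)\cap(B+\beta w)\subseteq B+\gamma w$ for $\gamma\in[\beta,\alpha]$, obtain $K=K_{n}\cap(-K_{n})$ for every $n$, and let $n\to\infty$ to recover the tangent cones (which coincide with the paper's $p-C$ and $C-p$). Your route avoids the two-dimensional picture entirely and is essentially algebraic; the only points worth a sentence each are that the homothety ratio may be taken positive (automatic for a centrally symmetric body) and that ${\rm int}\bigl(\overline{\bigcup_{n}K_{n}}\bigr)={\rm int}\bigl(\bigcup_{n}K_{n}\bigr)$ for an increasing union of convex bodies with nonempty interior, which justifies passing from the closed cone to the union in the final step. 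The paper's argument is shorter once the figure is drawn; yours makes the role of convexity explicit and would adapt more readily to settings where drawing the planar quadrangle is awkward.
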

\begin{proof}
Denote the homothety $h$ defined by $hK=K\cap\left(K+a\right)$.
Since $K$ is centrally symmetric, it follows that $hK$ is symmetric
about $\frac{a}{2}$, and since $hK$ is homothetic to $K$ it follows
that $hK=\frac{a}{2}+\alpha K$, where $\alpha=\frac{p-a/2}{p}$.
Thus $hK=\alpha\left(K-p\right)+p$, which means that $p=hp$ is the
center of homothety of $h(x)=\alpha(x-p)+p$. 

Define the cone 
\[
C_{o}=\left\{ \lambda\left(p-y\right)\,:\, y\in{\rm int}\, K,\lambda\ge0\right\} 
\]
and denote its closure by $C$. Let us prove that $\left(p-C_{o}\right)\cap\left(C_{o}-p\right)\sub K$;
assume towards a contradiction that there exists $x\in\left(p-C_{o}\right)\cap\left(C_{o}-p\right)$
such that $x\not\in K$. Let $y,z\in K$ be the points for which 
\[
\left[p,y\right]=K\cap\left[p,x\right],\,\,\,\,\left[-p,z\right]=K\cap\left[-p,x\right]
\]
and Consider the quadrangle $T$ in $K$ with vertices $\pm p,y,z$.
\begin{figure}[h]
\begin{centering}
\includegraphics[width=9cm]{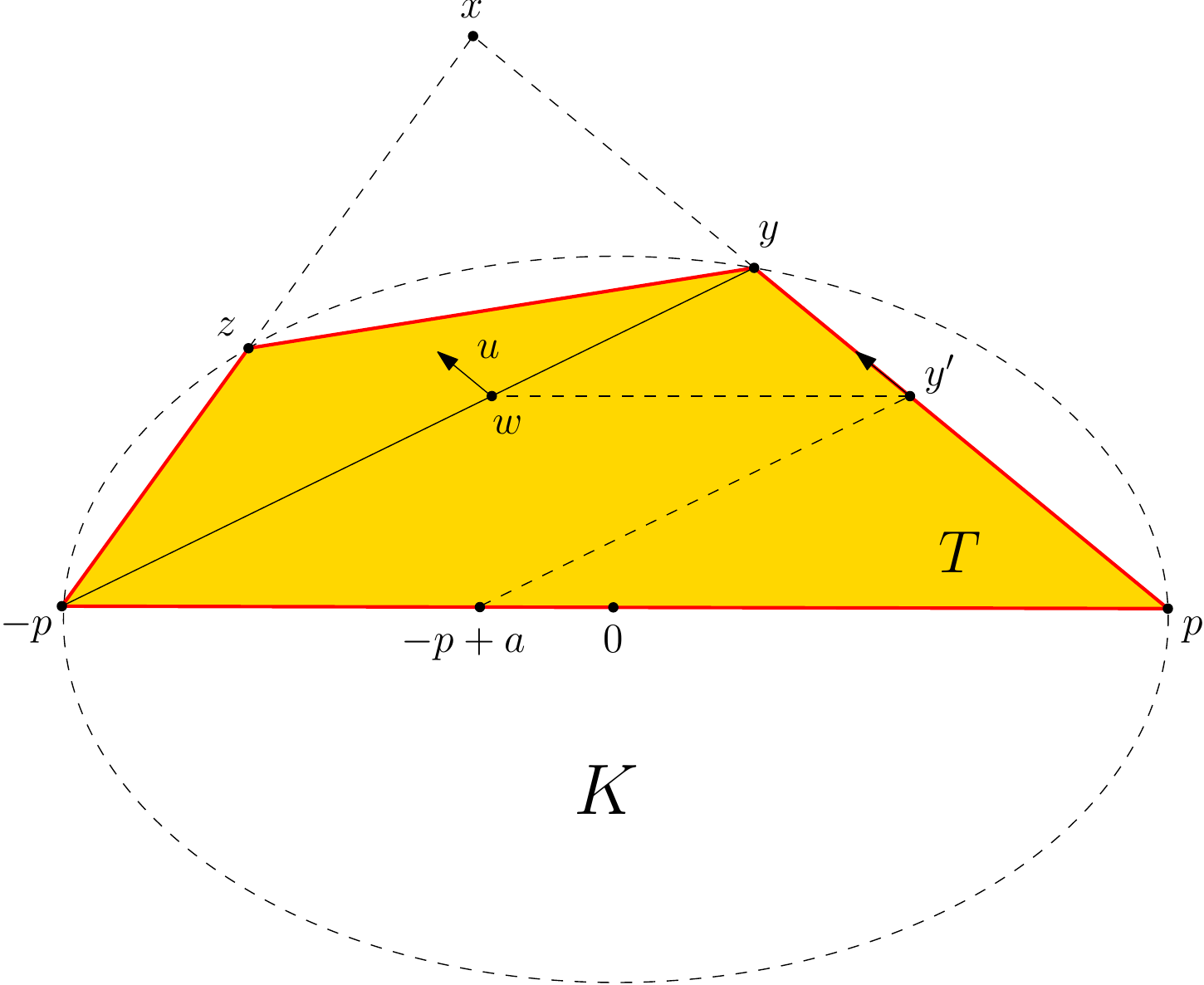}
\par\end{centering}

\caption{\label{fig:ConeCapCone}the vector $w+u$ belongs to $K$ and so $y'+u\in K\cap\left(K+a\right)$. }
\end{figure}
 Since $p$ is the center of homothety of $h$, the point $y'=\alpha\left(y-p\right)+p\in\left(hK\right)\cap\left[p,x\right]$
belongs to the boundary of $hK$. However, since the point $w=\alpha\left(y+p\right)-p$
is in the interior of $T$, it follows that, for some $\eps>0$, both
$w+u\in T$ and $y'+u\in T$, where $u=\eps\cdot\left(y-p\right)$
(see Figure \ref{fig:ConeCapCone}). Since $y'=w+a$, it follows that
$y'+u=\left(w+u\right)+a\in T+a$, and hence $y'+u\in K\cap\left(K+a\right)$,
a contradiction to the fact that $\left[p,y'\right]=\left(hK\right)\cap\left[p,x\right]$. 

We have proved that $\left(p-C_{o}\right)\cap\left(C_{o}-p\right)\sub K$
and hence $\left(p-C\right)\cap\left(C-p\right)\sub K$. The inclusion
$K\sub\left(p-C\right)\cap\left(C-p\right)$ trivially holds, and
thus $K=\left(p-C\right)\cap\left(C-p\right)$.
\end{proof}
We remark that if $K$ is not centrally symmetric, one may slightly
adjust Lemma \ref{lem:CcapC} and its proof in order to conclude the
following lemma. 
\begin{lem}
\label{lem:Non-Symm-CcapC}Let $K\sub\R^{n}$ be a convex body containing
the origin in its interior. Let $a\in K$ and assume that the intersection
point of $\partial K$ with the ray emanating from $0$ and passing
through a is an exposed point of $K$, denoted by $p$. Let $q$ denote
the point in $\partial K$ for which $0\in\left(q,p\right)$. Assume
that $\left(a+K\right)\cap K$ is homothetic to $K$. Then there exist
closed convex cones $C_{1},C_{2}\sub\R^{n}$ (both with vertex $\left\{ 0\right\} $)
such that $K=\left(p+C_{1}\right)\cap\left(q+C_{2}\right)$.
\end{lem}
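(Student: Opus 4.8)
The statement to prove is Lemma \ref{lem:Non-Symm-CcapC}, the non-symmetric analogue of Schneider's Lemma \ref{lem:CcapC}. The plan is to mimic the symmetric proof as closely as possible, tracking where central symmetry was used and replacing it by the two distinct boundary points $p$ and $q$ (where the ray through $a$ exits $K$ forward and backward, respectively). As before, let $h$ be the homothety with $hK = K\cap(K+a)$. The first step is to identify the center and ratio of $h$. In the symmetric case one argued that $hK$ is symmetric about $a/2$; without symmetry this fails, so instead I would use the hypothesis that $p$ is an \emph{exposed} point: the forward tip of $K\cap(K+a)$ in the direction of $a$ must be the translate-compatible tip, which forces the fixed point of $h$ to be $p$ itself, i.e.\ $hp=p$ and $h(x)=\alpha(x-p)+p$ for some $0<\alpha<1$. (Exposedness is what guarantees $p$ is the unique candidate fixed point; this is the place where the symmetric proof used $K=-K$ to locate the homothety center.)

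\textbf{The two cones.} Having located $h$ at center $p$, I would define two cones rather than one. Set
\[
C_{1}^{o}=\{\lambda(y-p):\, y\in\inte K,\ \lambda\ge0\},\qquad C_{2}^{o}=\{\lambda(y-q):\, y\in\inte K,\ \lambda\ge0\},
\]
and let $C_{1},C_{2}$ be their closures. The goal is $K=(p+C_{1})\cap(q+C_{2})$. The inclusion $K\sub(p+C_{1})\cap(q+C_{2})$ is immediate from the definitions (every point of $K$ is reachable from $p$ and from $q$ along a chord into $K$). The real content, exactly as in Schneider's argument, is the reverse inclusion $(p+C_{1}^{o})\cap(q+C_{2}^{o})\sub K$, after which taking closures finishes the proof.

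\textbf{The reverse inclusion (the main obstacle).} This is the heart of the matter and where I expect the work to concentrate. I would argue by contradiction: suppose $x\in(p+C_{1}^{o})\cap(q+C_{2}^{o})$ but $x\notin K$. Following Schneider, I would cut the segments $[p,x]$ and $[q,x]$ by $\partial K$, producing boundary points $y$ (on $[p,x]$) and $z$ (on $[q,x]$), and form the quadrangle-type region $T\sub K$ with vertices $p,q,y,z$. The image $y'=h(y)=\alpha(y-p)+p$ lies on $\partial(hK)$ along the ray $[p,x]$. Now I would produce a small perturbation vector $u$ pointing along $(y-p)$ and a companion point $w$ (the analogue of $w=\alpha(y+p)-p$ in the symmetric case, here adjusted to $w=y'-a$ so that $y'=w+a$), verify that for sufficiently small $\eps>0$ both $w+u$ and $y'+u$ lie in $T\sub K$, and conclude $y'+u=(w+u)+a\in K\cap(K+a)=hK$ with $y'+u$ beyond $y'$ on the ray $[p,x]$ — contradicting that $y'$ is the \emph{last} point of $hK$ on that ray. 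The delicate points are: (i) checking that $w=y'-a$ genuinely lies in the interior of $T$ (in the symmetric case this used the midpoint structure $a/2$; here one must use that $p,q$ are antipodal \emph{through the origin} along the direction of $a$, so that the translation by $a$ maps the relevant edge of $T$ correctly), and (ii) ensuring exposedness of $p$ is actually used to rule out degenerate configurations where the ray $[p,x]$ meets $\partial K$ tangentially. I expect (i) to be the genuine obstacle, since the loss of symmetry means the affine bookkeeping that made $w$ land inside $T$ no longer follows from a clean reflection; one must instead verify it directly from convexity of $T$ and the relation $q+C_{2}^{o}\ni x$. Once the contradiction is secured, the lemma follows verbatim as in the symmetric case.
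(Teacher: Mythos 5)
The paper does not actually give a proof of this lemma: it only remarks that one adjusts the proof of Lemma \ref{lem:CcapC}, the sole new ingredient being that the exposedness of $p$ (via the Rogers--Shephard equality-case argument) is what identifies $p$ as the center of the homothety $h$ --- which is exactly your plan, so your approach coincides with the paper's intended one. Your flagged obstacle (i) resolves cleanly: intersecting $hK=K\cap(K+a)$ with the line through $p$ and $q$ forces $\alpha q+(1-\alpha)p=q+a$, i.e.\ $a=(1-\alpha)(p-q)$, whence $w=y'-a=\alpha y+(1-\alpha)q$ lies on the segment $[q,y]$ at the same fraction $\alpha$ as in the symmetric case, and the quadrangle argument goes through verbatim with $-p$ replaced by $q$.
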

The main difference between the proof of Lemma \ref{lem:CcapC} and
the proof of Lemma \ref{lem:Non-Symm-CcapC} is that, in the latter,
in order to prove that $p$ is the center of homothety of $h$, we
need to use the assumption that $p$ is an exposed point of $K$.
This is done by using the exact same argument as in the equality case
of Rogers-Shepard inequality in \cite{SchBook}. We shall not have
use of Lemma \ref{lem:Non-Symm-CcapC} in this note, and we omit the
proof's details.

\subsubsection{Covering a convex body by its interior}

It will be convenient to work with the weighted covering number of
a set $K$ by its interior ${\rm int}\left(K\right)$: $N_{\omega}\left(K,{\rm int}\left(K\right)\right)$.
The definition of this number is literally the same as for compact
sets;
\[
N_{\omega} (K,{\rm int} (K ) )=\inf\left\{ \nu(\R^{n})\,:\,\nu*\one_{{\rm int} (K )}\ge\one_{K}\,,\nu\in{\cal D}_{+}^{n}\right\} .
\]
We claim that covering a compact set, fractionally, by its interior
is the limit of fractionally covering it by infinitesimally smaller homothetic
copies of itself. More precisely, we prove the following.
\begin{lem}
\label{lem:FracCovInt} Let $K\sub\R^{n}$ be compact with non-empty
interior. Then 
\[
N_{\omega} (K,{\rm int}\left(K\right) )=\lim_{\lambda\to1^{-}}N_{\omega} (K,\lambda K ).
\]
 \end{lem}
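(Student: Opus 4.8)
The plan is to prove the two inequalities separately, exploiting the monotonicity of $N_\omega$ in its second argument. Since $\lambda K \sub {\rm int}(K)$ for every $0<\lambda<1$ (using that $K$ is convex with non-empty interior and, after translating, contains the origin in its interior), enlarging the covering body can only decrease the covering number, so $N_\omega(K,{\rm int}(K)) \le N_\omega(K,\lambda K)$ for each such $\lambda$; taking the limit $\lambda \to 1^-$ gives one direction immediately. The real content is the reverse inequality $\lim_{\lambda \to 1^-} N_\omega(K,\lambda K) \le N_\omega(K,{\rm int}(K))$.

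For the reverse direction I would start from a near-optimal discrete weighted covering of $K$ by ${\rm int}(K)$. Fix $\eps>0$ and take $\nu = \sum_{i=1}^N \omega_i \delta_{x_i}$ with $\nu * \one_{{\rm int}(K)} \ge \one_K$ and $\sum_i \omega_i < N_\omega(K,{\rm int}(K)) + \eps$. The idea is that the finitely many translates $x_i + {\rm int}(K)$ form an \emph{open} cover of the compact set $K$, and openness gives room to shrink each copy slightly while preserving the covering condition. Concretely, for $x \in K$ the covering inequality says $\sum_{\{i : x \in x_i + {\rm int}(K)\}} \omega_i \ge 1$, and membership in the open set $x_i + {\rm int}(K)$ is stable under replacing ${\rm int}(K)$ by $\lambda K$ for $\lambda$ close enough to $1$ (since $\bigcup_{\lambda<1}(x_i + \lambda K) = x_i + {\rm int}(K)$). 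I would then argue that a \emph{single} $\lambda$ close to $1$ works simultaneously for all $x \in K$: this is where compactness of $K$ and a uniform argument enter.

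The step I expect to be the main obstacle is precisely this uniformity — passing from a pointwise, $x$-dependent choice of $\lambda$ to one $\lambda$ valid for all of $K$ at once, because the shrinkage must not destroy the covering at any point. The clean way to handle it is to work with the lower semicontinuous ``coverage'' functions $g_\lambda(x) := \sum_i \omega_i \one_{x_i + \lambda K}(x)$ and $g(x) := \sum_i \omega_i \one_{x_i + {\rm int}(K)}(x)$. Since there are only finitely many terms and the sets $x_i + \lambda K$ increase to $x_i + {\rm int}(K)$ as $\lambda \to 1^-$, one has $g_\lambda \uparrow g$ pointwise; the covering condition $g \ge 1$ on the compact $K$ together with the lower semicontinuity of $g_\lambda$ should let me use a Dini-type / finite-subcover argument to extract a single $\lambda_0 < 1$ with $g_{\lambda_0} \ge 1$ on $K$. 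For that $\lambda_0$, the same measure $\nu$ witnesses $N_\omega(K,\lambda_0 K) \le \sum_i \omega_i < N_\omega(K,{\rm int}(K)) + \eps$, and by monotonicity $\lim_{\lambda \to 1^-} N_\omega(K,\lambda K) = \inf_{\lambda<1} N_\omega(K,\lambda K) \le N_\omega(K,\lambda_0 K)$.

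Combining the two directions and letting $\eps \to 0$ yields the claimed equality. I would flag one technical subtlety to verify carefully: the coverage values live in the half-open condition ``$\ge 1$,'' so when shrinking I must ensure no point sitting exactly on a boundary $\partial(x_i + K)$ silently drops out of the count; keeping the open interiors $x_i + \lambda K$ (strictly inside $x_i + {\rm int}(K)$) and using that the finite sum only jumps on a finite union of boundaries sidesteps this, since for each fixed $x$ only finitely many thresholds in $\lambda$ are relevant. This finiteness is exactly what makes the otherwise delicate interchange of limit and covering safe.
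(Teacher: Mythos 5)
Your proposal is correct and follows essentially the same route as the paper: the easy direction by monotonicity, and the reverse direction by taking a near-optimal finite discrete covering of $K$ by ${\rm int}(K)$ and using compactness of $K$, together with the finiteness of the weighted sum (so that at each point the coverage value stabilizes rather than merely tending to $1$), to extract a single $\lambda_{0}<1$ for which the same total weight still covers. The only difference is cosmetic: the paper uniformizes by dilating the test point to $(1+\delta)x$ and rescaling the centers to $x_{i}/(1+\delta)$, whereas you shrink the translates in place and argue via openness of the superlevel sets $\{x:\sum_{i}\omega_{i}\one_{x_{i}+\lambda\,{\rm int}(K)}(x)\ge 1\}$ (note these coverage functions are the ones that are genuinely lower semicontinuous, as you partly flag), and both hinge on the same compactness step.
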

\begin{proof}
Assume without loss of generality that $0\in\inte K$. The inequality
\[
N_{\omega}\left(K,{\rm int}\left(K\right)\right)\le\lim_{\lambda\to1^{-}}N_{\omega}\left(K,\lambda K\right)
\]
is straightforward by definition. For the opposite direction, let
$\mu=\sum_{i=1}^{N}\alpha_{i}\delta_{x_{i}}$ be a covering measure
of $K$ by ${\rm int}\left(K\right)$, i.e., $\mu*\one_{{\rm int}\left(K\right)}\ge\one_{K}.$
Note that if $x\in K$, $x\in\bigcap_{i\in A}\left(x_{i}+{\rm int}\left(K\right)\right)$
for some set of indices $A$, then 
\[
B (x,r )\sub\bigcap_{i\in A}\left(x_{i}+{\rm int}\left(K\right)\right)
\]
 for some open ball $B (x,r )$. Since $1\le\left(\mu*\one_{{\rm int}\left(K\right)}\right)\left(x\right)$,
it follows that for all $y\in B (x,r )$ we also have 
\[
1\le\left(\mu*\one_{{\rm int}\left(K\right)}\right)\left(y\right)=\sum_{i=1}^{N}\alpha_{i}\one_{x_{i}+{\rm int}\left(K\right)}\left(y\right).
\]
Hence, as $K$ is compact, there exists $\delta>0$ such that for
all $x\in K$,
\begin{align*}
1 & \le\sum_{i=1}^{N}\alpha_{i}\one_{x_{i}+{\rm int}\left(K\right)}\left(\left(1+\delta\right)x\right)=\sum_{i=1}^{N}\alpha_{i}\one_{\frac{1}{1+\delta}{\rm int}\left(K\right)}\left(x-\frac{x_{i}}{1+\delta}\right)=\left(\nu*\one_{\frac{1}{1+\delta}{\rm int}\left(K\right)}\right)\left(x\right)
\end{align*}
where $\nu=\sum_{i=1}^{N}\alpha_{i}\delta_{\frac{x_{i}}{1+\delta}}$.
Therefore, $1\le\mu*\one_{\frac{1}{1+\delta}{\rm int}\left(K\right)}\le\mu*\one_{\lambda_{0}K}$
for some $0<\lambda_{0}<1$, and so 
\[
\lim_{\lambda\to1^{-}}N (K,\lambda K )\le N_{\omega} (K,{\rm int}\left(K\right) ),
\]
from which the desired equality is implied.
\end{proof}

\subsubsection{Antipodal sets}

In this section we recall a beautiful result by Danzer and Gr\"{u}nbaum,
which we will need to invoke later on. To state their result, recall
that given a convex body $K\sub\R^{n}$, a set of points $A\sub K$
is said to be an antipodal set in $K$ if for each distinct pair of
points in $A$ there is a pair of distinct parallel supporting hyperplanes
of $K$, each containing one of the two points.

Danzer and Gr\"{u}nbaum \cite{DanzerGrunbaum} proved the following
theorem. 
\begin{thm}
\label{thm:DG}[Danzer and Gr\"{u}nbaum] The maximal cardinality
of an antipodal set in a convex body $K\sub\R^{n}$ is bounded from
above by $2^{n}$. Moreover, equality holds if and only if $K$ is
a parallelotope. 
\end{thm}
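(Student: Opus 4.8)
The plan is to prove the upper bound $|A|\le 2^n$ by a packing argument and then to attack the equality case by reinterpreting it as a tiling problem. Write $A=\{a_1,\dots,a_m\}\sub K$ and associate to each point the half-sized homothet $H_i=\tfrac12(a_i+K)$. Since $a_i\in K$ and $K$ is convex, every $H_i\sub K$, and each has volume $2^{-n}\vol(K)$. The heart of the matter is to show that the $H_i$ have pairwise disjoint interiors; granting this, the estimate $m\,2^{-n}\vol(K)=\sum_i\vol(H_i)\le\vol(K)$ forces $m\le 2^n$.

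To prove disjointness of interiors I would use the antipodal hypothesis directly. Fix $i\ne j$ and let $u$ be a vector whose two parallel supporting hyperplanes contain $a_i$ and $a_j$, so that $\iprod{u}{a_i}=h_K(u)$ and $\iprod{u}{a_j}=-h_K(-u)$, where $h_K(u)=\max_{x\in K}\iprod{u}{x}$. If a point $z$ lay in $\inte{H_i}\cap\inte{H_j}$, then writing $x=2z-a_i$ and $y=2z-a_j$ we would have $x,y\in\inte K$ and $\tfrac12(a_i+x)=z=\tfrac12(a_j+y)$, whence $a_i-a_j=y-x$. Pairing with $u$ gives $\iprod{u}{a_i-a_j}=h_K(u)+h_K(-u)$, the width of $K$ in direction $u$, while $\iprod{u}{y-x}<h_K(u)+h_K(-u)$ because $x,y$ are interior points (so $\iprod{u}{y}<h_K(u)$ and $\iprod{u}{x}>-h_K(-u)$). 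This is a contradiction, so the interiors are indeed disjoint.

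For the equality case, after rescaling by $2$ the statement ``the $2^n$ homothets $H_i$ tile $K$'' becomes ``$2K=\bigcup_{i=1}^{2^n}(a_i+K)$ with pairwise disjoint interiors''; that is, $K$ tiles its own double by $2^n$ translates. I would argue by induction on $n$, analyzing this tiling facet by facet. For a supporting direction $u$, a translate $a_i+K$ meets the top facet $2F_K(u)$ of $2K$ (where $F_K(u)=\{x\in K:\iprod{u}{x}=h_K(u)\}$) precisely when $\iprod{u}{a_i}=h_K(u)$, i.e. when the antipodal vertex $a_i$ lies on the face $F_K(u)$; the traces $a_i+F_K(u)$ of these pieces should then tile $2F_K(u)=F_K(u)+F_K(u)$ by translates of the lower-dimensional body $F_K(u)$. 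The inductive hypothesis would force each such face to be a parallelotope, and assembling this across a spanning family of directions should pin $K$ down to a parallelotope; the converse, that a parallelotope carries $2^n$ antipodal vertices, is immediate.

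The main obstacle is exactly this equality analysis: verifying that the points of $A$ lying on a facet are themselves antipodal within that facet and number precisely $2^{n-1}$, that their traces genuinely tile $2F_K(u)$ with disjoint relative interiors, and then converting this local facewise information into the global parallelotope conclusion with a clean induction and base case. This is where the delicate combinatorics of Danzer--Gr\"unbaum resides, and it is the local cone/contact structure of the tiling pieces --- of the same flavour as in Lemma \ref{lem:CcapC} --- that one expects to exploit in carrying it out.
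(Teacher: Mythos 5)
You should first be aware that the paper does not prove this theorem at all: it is imported verbatim from Danzer and Gr\"{u}nbaum's paper and used as a black box (the acknowledgments even thank Schneider for translating the original German source), so your attempt can only be measured against the classical argument. Your upper bound is correct and is essentially that argument: the half-homothets $H_i=\tfrac12(a_i+K)$ lie in $K$, each has volume $2^{-n}\vol(K)$, and the antipodal pair of supporting hyperplanes in direction $u$ forces $\inte{H_i}$ and $\inte{H_j}$ apart --- your computation $\iprod{u}{a_i-a_j}=h_K(u)+h_K(-u)>\iprod{u}{y-x}$ for interior points $x,y$ is a valid way to phrase the separation --- so the volumes add up inside $K$ and $m\le 2^n$.

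The equality case, however, is left as a sketch with genuine gaps, and that is precisely the half of the theorem this paper actually needs (it is what yields the parallelotope characterization in Theorem \ref{thm:WeightedHadwiger}). Concretely: (i) your facet-by-facet induction presupposes that the faces $F_K(u)$ are $(n-1)$-dimensional, which fails for a general convex body; you must first show $K$ is a polytope. This step is available from your own tiling but you do not take it: if $\bigcup_i H_i=K$, then every extreme point $e$ of $K$ lies in some $H_i$, so $e=\tfrac12 a_i+\tfrac12 x$ with $a_i,x\in K$, and extremality forces $e=a_i$; hence $K$ has at most $2^n$ extreme points, is a polytope, and its vertices all belong to $A$. (ii) The claims that the points of $A$ on a face $F_K(u)$ form an antipodal set \emph{of that face}, that they number exactly $2^{n-1}$, that their traces tile $F_K(u)+F_K(u)$ with disjoint relative interiors, and that parallelotope faces across a spanning family of directions force $K$ itself to be a parallelotope, are all stated as goals rather than proved --- you yourself flag them as ``the main obstacle.'' As it stands, the proposal establishes the inequality $|A|\le 2^n$ and the trivial direction that a parallelotope attains it, but not the forward implication of the equality statement.
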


\subsection{Completing the proofs}

We turn to prove the weighted version of the Levi-Hadwiger problem. 
\begin{proof}
[Proof of Theorem \ref{thm:WeightedHadwiger}] Suppose first that
$K$ is not centrally symmetric. Then the volume inequality in Theorem
\ref{thm:VolumeBounds}, immediately implies that 
\[
\lim_{\lambda\to 1^{-}}
N(K,\lambda K)
\le
\lim_{\lambda\to 1^{-}}
\frac{\vol (K-\lambda K)}{\vol(\lambda K)}={2n \choose n},
\]
as required. Of course, in the symmetric case the same argument gives
the bound $2^{n}$. But we proceed differently so as to be able to
analyze the equality case. 

Suppose that $K$ is centrally symmetric. Without loss of generality,
we assume that $K$ has non-empty interior and that an open ball $B(0,r)$
of radius $r>0$ is contained in $K$. By Lemma \ref{lem:FracCovInt},
we may work with the weighted covering number of $K$ by its interior
$N_{\omega} (K,{\rm int}\left(K\right) )$,  and by Lemma
\ref{lem:GCC} we may also consider uniform covering measures to bound
$N_{\omega} (K,{\rm int}\left(K\right) )$ from above. Indeed,
consider the uniform measure $\mu$ on $K$ with density $\frac{2^{n}}{\vol(K)}$,
that is 
\begin{equation}
d\mu\left(y\right)=2^{n}\frac{\one_{K}\left(y\right)}{\vol\left(K\right)}dy.\label{eq:mu-def}
\end{equation}
Let us verify that $\mu$ is a covering measure of $K$ by ${\rm int}\left(K\right).$
Indeed, let $x\in K$. Then 
\begin{align*}
\left(\mu*\one_{{\rm int}\left(K\right)}\right)\left(x\right) & =\frac{2^{n}}{\vol(K)}\int\one_{{\rm int}\left(K\right)}\left(y\right)\one_{K}\left(x-y\right)dy=2^{n}\frac{\vol\left(K\cap\left(x+K\right)\right)}{\vol(K)}.
\end{align*}
Since 
\begin{equation}
K\cap\left(x+K\right)\supseteq\frac{K}{2}+\frac{1}{2}\left[K\cap\left(2x+K\right)\right]\supseteq\frac{K+x}{2},\label{eq:doubleInclusion}
\end{equation}
it follows that
\begin{equation}
2^{n}\frac{\vol\left(K\cap\left(x+K\right)\right)}{\vol(K)}\ge2^{n}\frac{\vol\left(K/2\right)}{\vol\left(K\right)}=1,\label{eq:ineq-vol-Kinteresect-K}
\end{equation}
as required.  This means that $N_{\omega}\left(K,{\rm int}\left(K\right)\right)\le\mu\left(\R^{n}\right)=2^{n}.$
To address the equality case, assume that for some centrally symmetric
convex body $K$ we have $N_{\omega}(K,{\rm int}(K))=2^{n}$. In particular,
for no $0<c<1$ is $c\mu$ (for $\mu$ given in \eqref{eq:mu-def})
a covering measure of $K$ by ${\rm int}(K)$. Therefore, the inequality
in \eqref{eq:ineq-vol-Kinteresect-K} must be an equality for some
$x\in K$. Indeed, if not, a standard compactness argument shows that
there exists $c\in(0,1)$ such that for all $x\in K$, 
\[
c2^{n}\frac{\vol\left(K\cap\left(x+K\right)\right)}{\vol(K)}\ge1
\]
which means that $c\mu$ is a covering measure of $K$ by $\inte K$,
a contradiction to the assumption $N_{\omega}\left(K,\inte K\right)=2^{n}$. 

Next, note that the inequality \eqref{eq:ineq-vol-Kinteresect-K}
is strict if and only if at least one of the inclusions in \eqref{eq:doubleInclusion}
is strict and, moreover, the rightmost inclusion in \eqref{eq:doubleInclusion}
is strict as long as $x\in K$ is not an extremal point of $K$. Thus,
the preceding two arguments imply that $K$ has at least one extremal
point $x_{0}\in K$ for which $\left(x_{0}+K\right)\cap K=\frac{K}{2}+\frac{1}{2}\left[K\cap\left(2x_{0}+K\right)\right]=\frac{K+x_{0}}{2}.$

Our aim for the remaining part of the proof, is to show that $K$
actually has at least $2^{n}$ extremal points $x_{1},\dots,x_{2^{n}}\in K$
such that $\left(x_{i}+K\right)\cap K=\frac{K+x_{i}}{2}$ for all
$i=1,\dots,2^{n}$, and use the characterization given in Lemma \ref{lem:CcapC}
for $K$ in order to deduce that $A=\left\{ x_{1},\dots,x_{2^{n}}\right\} $
is an antipodal set of $K$. Finally, we shall invoke Theorem \ref{thm:DG}
to conclude that $K$ is a parallelotope.

Assume that there exists exactly $k$ extremal points of $K$ $x_{1},\dots,x_{k}\in K$
such that 
\[
\left(x_{i}+K\right)\cap K=\frac{K}{2}+\frac{1}{2}\left[K\cap\left(2x_{i}+K\right)\right]=\frac{K+x_{i}}{2}
\]
for all $i=1,\dots,k$. Then, by using the same compactness argument
as before, it follows that there exists $0<c<1$ such that for all
$x\in K\setminus\left\{ B\left(x_{1},r\right),\dots,B\left(x_{k},r\right)\right\} $,
\[
\left(\left(c\mu\right)*\one_{{\rm int}\left(K\right)}\right)\left(x\right)=c\mu\left(x+\inte K\right)\ge1.
\]
Since $B\left(0,r\right)\sub\inte K$, we have that $B\left(x_{i},r\right)\sub x_{i}+\inte K,$
and so it follows that the measure 
\[
\nu=c\cdot\mu+\left(1-c\right)\sum_{i=1}^{k}\delta_{x_{i}}
\]
is a covering measure of $K$ by ${\rm int}\left(K\right)$. Therefore,
the equality assumption $N_{\omega}\left(K,\inte K\right)=2^{n}$
implies that $\nu\left(\R^{n}\right)=c2^{n}+\left(1-c\right)k\ge2^{n}$
which implies that $k\ge2^{n}.$ Concluding the above, there exist
at least $2^{n}$ extremal points $A=\left\{ x_{1},\dots,x_{2^{n}}\right\} $
in $K$ such that $K\cap\left(x_{i}+K\right)=\frac{K+x_{i}}{2}$ for
all $i\in A$. By Lemma \ref{lem:CcapC}, for each $i\in A$ there
exists a closed convex cone $C_{i}$ $ $such that $K=\left(x_{i}-C_{i}\right)\cap\left(C_{i}-x_{i}\right)$.

Let us next prove that if $x_{j}\neq x_{i}$ then $x_{j}$ belongs
to the boundary of $C_{i}-x_{i}$. Indeed, if $x_{j}$ belonged to
the interior of $C_{i}-x_{i}$ then it would have to belong to the
boundary of $x_{i}-C_{i}$ as it belongs to $\partial K$. However
since $x_{j}\neq x_{i}$. there exists a segment $\left(a,b\right)\sub x_{i}-C_{i}$
on the ray emanating from $x_{i}$ and passing through $x_{j}$ which
contains $x_{j}$. Together with the assumption that $x_{j}$ belongs
to the interior of $C_{i}-x_{i}$, it follows that $ $there exists
a segment $\left(a',b'\right)\sub\left(a,b\right)$ which both contains
$x_{j}$ and is contained in $K=\left(x_{i}-C\right)\cap\left(C_{i}-x_{i}\right)$,
a contradiction to the fact that $x_{j}$ is an extremal point of
$K$. 

It remains to show that $A$ is an antipodal set of $K$. Indeed,
since $x_{j}$ belongs to the boundary of $C_{i}-x_{i}$, the segment
$\left[-x_{i},x_{j}\right]$ is contained in the boundary of $C_{i}-x_{i}$
and so there exists a supporting hyperplane $H$ of $C_{i}-x_{i}$
which contains both $-x_{i}$ and $x_{j}$. In particular, $H$ supports
$K$. In other words, there exists a vector $v\in\R^{n}\setminus\left\{ 0\right\} $
such that for all $x\in C_{i}-x_{i}$, 
\[
\iprod xv\le\iprod{x_{j}}v=\iprod{-x_{i}}{,v}.
\]
Hence, for all $x\in x_{i}-C_{i}$,
\[
\iprod xv\ge\iprod{-x_{j}}v=\iprod{x_{i}}{,v},
\]
which means that 
\[
H'=H+\left(x_{i}-x_{j}\right)=\left\{ x+x_{i}-x_{j}\in\R^{n}\,:\,\iprod xv=\iprod{x_{j}}v\right\} =\left\{ y\in\R^{n}\,:\,\iprod yv=\iprod{x_{i}}v\right\} 
\]
contains $x_{i}$, supports $x_{i}-C_{i}$, and in particular supports
$K$. Thus, we conclude that$ $ $A$ is an antipodal set of $K$.
By Theorem \ref{thm:DG}, the maximal cardinality of an antipodal
set of a convex body is $2^{n}$ and equality holds only for parallelotopes,
and thus $K$ is a parallelotope.
\end{proof}

\begin{proof}
[Proof of Corollary \ref{cor:Hadwiger}]Fix $0<\delta<1$ and let
$n\ge3$. By Theorem \ref{thm:DeltaEquiv}, for any $0<\lambda<1$
\begin{align*}
N\left(K,\lambda K\right) & \le\ln\left(4\overline{N}\left(K,\delta\lambda K\right)\right)\left(N_{\omega}\left(K,\left(1-\delta\right)\lambda K\right)+1\right)\\
 & +\sqrt{\ln\left(4\overline{N}\left(K,\delta\lambda K\right)\right)\left(N_{\omega}\left(K,\left(1-\delta\right)\lambda K\right)+1\right)}\\
 & \le\ln\left(4\overline{N}\left(K,\delta\lambda K\right)\right)N_{\omega}\left(K,\left(1-\delta\right)\lambda K\right)+\sqrt{\ln\left(4\overline{N}\left(K,\delta\lambda K\right)\right)N_{\omega}\left(K,\left(1-\delta\right)\lambda K\right)}\\
 & +2\ln\left(4\overline{N}\left(K,\delta\lambda K\right)\right).
\end{align*}
 By Theorem \ref{thm:VolumeBounds}, we have that
\[
N_{\omega}\left(K,\left(1-\delta\right)\lambda K\right)\le\frac{\vol\left(K+\left(1-\delta\right)\lambda K\right)}{\vol\left(\left(1-\delta\right)\lambda K\right)}=\left(1+\frac{1}{\left(1-\delta\right)\lambda}\right)^{n}.
\]
By classical volume bounds we have that 
\[
\overline{N}\left(K,\delta\lambda K\right)\le M\left(K,\frac{\delta}{2}\lambda K\right)\le\frac{\vol\left(K+\frac{\delta}{2}\lambda K\right)}{\vol\left(\frac{\delta}{2}\lambda K\right)}=\left(1+\frac{2}{\lambda\delta}\right)^{n}
\]
and so 
\begin{align*}
N\left(K,\lambda K\right) & \le\left(1+\frac{1}{\left(1-\delta\right)\lambda}\right)^{n}\left[n\ln\left(4^{1/n}+\frac{2\cdot4^{1/n}}{\lambda\delta}\right)\right]\\
 & +\sqrt{\left(1+\frac{1}{\left(1-\delta\right)\lambda}\right)^{n}\left[n\ln\left(4^{1/n}+\frac{2\cdot4^{1/n}}{\lambda\delta}\right)\right]}+2n\ln\left(4^{1/n}+\frac{2\cdot4^{1/n}}{\lambda\delta}\right).
\end{align*}
Taking the limit $\lambda\to1^{-}$ implies that 
\begin{align*}
\lim_{\lambda\to1^{-}}N\left(K,\lambda K\right) & \le\left(1+\frac{1}{\left(1-\delta\right)}\right)^{n}\left[n\ln\left(4^{1/n}+\frac{2\cdot4^{1/n}}{\delta}\right)\right]\\
 & +\sqrt{\left(1+\frac{1}{\left(1-\delta\right)}\right)^{n}\left[n\ln\left(4^{1/n}+\frac{2\cdot4^{1/n}}{\delta}\right)\right]}+2n\ln\left(4^{1/n}+\frac{2\cdot4^{1/n}}{\delta}\right).
\end{align*}
By plugging $\delta=\frac{1}{n\ln\left(n\right)}$ we get
\begin{align*}
\lim_{\lambda\to1^{-}}N\left(K,\lambda K\right) & \le\left(2+\frac{1}{n\ln n-1}\right)^{n}\left[n\ln\left(4^{1/n}+2\cdot4^{1/n}n\ln n\right)\right]+\\
 & +\sqrt{\left(2+\frac{1}{n\ln n-1}\right)^{n}\left[n\ln\left(4^{1/n}+2\cdot4^{1/n}n\ln n\right)\right]}\\
 & +2n\ln\left(4^{1/n}+2\cdot4^{1/n}n\ln n\right).
\end{align*}
Since, for all $n\ge3$, $ $
\[
\left(2+\frac{1}{n\ln n-1}\right)^{n}\le2^{n}e^{1/\left(2\ln n-2/n\right)}\le2^{n}\left(1+\frac{1}{\ln n-1/n}\right)\le2^{n}\left(1+\frac{2}{\ln n}\right)
\]
and 
\[
n\ln\left(4^{1/n}+2\cdot4^{1/n}n\ln n\right)\le n\ln\left(4n\ln n\right)=n\ln4+n\ln n+n\ln\ln n,
\]
it follows that
\begin{align*}
\left(2+\frac{1}{n\ln n-1}\right)^{n}n\ln\left(4^{1/n}+2\cdot4^{1/n}n\ln n\right) & \le2^{n}\left(1+\frac{2}{n\ln n}\right)\left(n\ln n+n\ln\ln n+n\ln4\right)\\
 & \le2^{n}\left(n\ln n+n\ln\ln n+3.1n\right).
\end{align*}
Moreover, one may also check that
\begin{align*}
\sqrt{\left(2+\frac{1}{n\ln n-1}\right)^{n}n\ln\left(4^{1/n}+2\cdot4^{1/n}n\ln n\right)} & \le2^{n}0.5n
\end{align*}
and that
\[
2n\ln\left(4^{1/n}+2\cdot4^{1/n}n\ln n\right)\le2^{n}0.7n.
\]
Thus, it follows that
\begin{align*}
\lim_{\lambda\to1^{-}}N\left(K,\lambda K\right) & \le2^{n}\left(n\ln n+n\ln\ln n+5n\right).
\end{align*}

\end{proof}

\bibliographystyle{amsplain}
\addcontentsline{toc}{section}{\refname}\bibliography{Covering}

\bigskip

{\sc Tel Aviv University}

{\tt shiri@post.tau.ac.il}

{\tt boazslom@post.tau.ac.il}

\end{document}